\documentclass[12pt,reqno]{amsart}

\numberwithin{equation}{section}

\usepackage{amsbsy}

\usepackage[colorlinks]{hyperref}
\hypersetup{
linkcolor=blue,          
citecolor=green,        
}
\usepackage[nobysame,abbrev,alphabetic]{amsrefs}




\usepackage{enumerate}
\usepackage{amssymb}
\usepackage{amsmath}
\usepackage{amscd}
\usepackage{amsthm}
\usepackage{amsfonts}
\usepackage{graphicx}
\usepackage[all]{xy}
\usepackage{verbatim}
\usepackage{hyperref}
\usepackage{gensymb}
\usepackage{mathrsfs}

\newtheorem{theorem}{Theorem}[section]

\newtheorem{lemma}[theorem]{Lemma}
\newtheorem{corollary}[theorem]{Corollary}
\newtheorem{cor}[theorem]{Corollary}
\theoremstyle{definition}\newtheorem{definition}[theorem]{Definition}

\newtheorem{conjecture}[theorem]{Conjecture}

\newtheorem{proposition}[theorem]{Proposition}

\theoremstyle{definition}

\theoremstyle{definition}
\theoremstyle{definition}\newtheorem{remark}[theorem]{Remark}
\theoremstyle{definition}

\newcommand{\al}{\alpha}

\newcommand{\ga}{\gamma}
\newcommand{\Ga}{\Gamma}
\newcommand{\del}{\delta}
\newcommand{\Del}{\Delta}

\newcommand{\Lam}{\Lambda}

\newcommand{\Om}{\Omega}
\newcommand{\vphi}{\varphi}
\newcommand{\vre}{\varepsilon}
    \newcommand{\Xn}{{\mathcal{L}_n}}
\newcommand\crly[1]{\mathscr{#1}}
\newcommand{\sm}{\smallsetminus}

\newcommand{\df}{{\, \stackrel{\mathrm{def}}{=}\, }}

\newcommand\Name[1]{\label{#1}{\ifdraft{\sn
      [#1]}\else\ignorespaces\fi}}

\newcommand\eq[2]{{\ifdraft{\ \tt
      [#1]}\else\ignorespaces\fi}\begin{equation}\label{#1}{#2}\end{equation}} 
\newcommand {\equ}[1]{\eqref{#1}}

\newcommand{\BB}{{\mathcal{B}}}

\newcommand\Vol{\mathrm{Vol}}

\newcommand\covrad{\mathrm{covrad}}
\newcommand\conv{\mathrm{conv}}
\newcommand\supp{\mathrm{supp}}

\newcommand{\cA}{\mathcal{A}}
\newcommand{\cB}{\mathcal{B}}

\newcommand{\cD}{\mathcal{D}}
\newcommand{\cE}{\mathcal{E}}

\newcommand{\cI}{\mathcal{I}}

\newcommand{\cL}{\mathcal{L}}
\newcommand{\cM}{\mathcal{M}}

\newcommand{\cS}{\mathcal{S}}

\newcommand{\cU}{\mathcal{U}}
\newcommand{\cV}{\mathcal{V}}
\newcommand{\cW}{\mathcal{W}}

\newcommand{\cY}{\mathcal{Y}}

\newcommand{\spa}{{\rm span}}

\newcommand{\LL}{\mathcal{L}}

\newcommand{\bR}{\mathbb{R}}
\newcommand{\bZ}{\mathbb{Z}}

\newcommand{\R}{{\mathbb{R}}}

\newcommand{\Z}{{\mathbb{Z}}}

\newcommand {\ignore}[1]  {}

\newcommand{\SL}{\operatorname{SL}}

\newcommand{\defi}{\overset{\on{def}}{=}}

\newcommand\norm[1]{\left\|#1\right\|}

\newcommand\set[1]{\left\{#1\right\}}
\newcommand\pa[1]{\left(#1\right)}

\newcommand{\E}{\mathbf{e}}

\newcommand\av[1]{\left|#1\right|}
\newcommand\on[1]{\operatorname{#1}}

\newcommand\tb[1]{\textbf{#1}}
\newcommand\mat[1]{\pa{\begin{matrix}#1\end{matrix}}}
\newcommand\br[1]{\left[#1\right]}
\newcommand\smallmat[1]{\pa{\begin{smallmatrix}#1\end{smallmatrix}}}

\newcommand{\lra}{\longrightarrow}

\newcommand{\onto}{\xymatrix{\ar@{>>}[r]&}}
\newcommand{\da}[4]{\xymatrix{#1 \ar@<.5ex>[r]^{#2} \ar@<-.5ex>[r]_{#3} & #4}}

\newif\ifdraft\drafttrue
\draftfalse

\font\sn = cmssi8 scaled \magstep0

\marginparsep = 2pt




\newcommand{\nerve}{{\rm Nerve}}
\newcommand{\order }{{\mathrm {ord}}}
\newcommand{\Lb}{{\mathrm {Leb}}}
\newcommand{\mesh}{{\mathrm {mesh}}}

\newcommand{\asdim}{{\rm asdim}}

\begin{document}
\title{Stable lattices and the diagonal group}
\author{Uri Shapira}
\address{Dept. of Mathematics, Technion, Haifa, Israel
{\tt ushapira@tx.technion.ac.il} 
}
\author{Barak Weiss}
\address{Dept. of Mathematics, Tel Aviv University, Tel Aviv, Israel
{\tt barakw@post.tau.ac.il}}

\maketitle
\begin{abstract}
Inspired by work of McMullen, we show that any orbit of the diagonal
group in the space of lattices accumulates on the set of stable
lattices. As consequences, we settle a conjecture of Ramharter
concerning the asymptotic behavior of the Mordell constant, and
reduce Minkowski's conjecture on products of linear forms to a
geometric question, yielding two new proofs of the conjecture in
dimensions up to 7. 

\end{abstract}
\section{Introduction}
Let $n \geq 2$ be an integer, let $G \df \SL_n(\R), \, \Gamma \df
\SL_n(\Z)$, let $A \subset G$ be the subgroup of positive diagonal
matrices and let  $\Xn \df G/\Gamma$ be the space of unimodular 
lattices in $\R^n$. The purpose of this paper is to present a
dynamical result regarding the action of $A$ on $\Xn$, and to present
some consequences in the geometry of numbers. 

A lattice $x \in \Xn$ is called {\em stable} if for any
subgroup $\Lambda \subset x$, the covolume of $\Lambda$ in
$\spa(\Lambda)$ is at least 1. In particular the length of the
shortest nonzero vector in $x$ is at least 1. Stable lattices have
also been called `semistable', they were introduced in a broad
algebro-geometric context by Harder, Narasimhan and Stuhler
\cite{Stuhler, Harder}, and were used to develop a
reduction theory for the study of the topology of locally symmetric
spaces. See Grayson \cite{Grayson} for a clear exposition. 
\begin{theorem}\Name{thm: main} 
For any $x \in \Xn$, the orbit-closure $\overline{Ax}$ contains a
stable lattice. 
\end{theorem}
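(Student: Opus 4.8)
The plan is to find, for each lattice $x$, a point in $\overline{Ax}$ which minimizes some proper function related to the stability defect, and then argue that this minimizer must be stable. The natural quantity to consider is something like the function measuring how badly stability fails: for a lattice $y$, let $\alpha(y) \df \min_{\Lambda \subset y} \covrad(\Lambda)$ where the minimum is over nonzero primitive sublattices $\Lambda$ and $\covrad(\Lambda)$ denotes the covolume of $\Lambda$ inside $\spa(\Lambda)$. Then $y$ is stable precisely when $\alpha(y) \geq 1$. First I would establish that $\alpha$ is a well-defined, positive, upper semi-continuous (or at least nicely behaved) function on $\Xn$ whose relevant superlevel sets are compact; this uses Mahler's compactness criterion together with the fact that only finitely many primitive sublattices can have small covolume.

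\smallskip

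\textbf{Finding the right point in the orbit closure.} The key step is to consider $\sup_{a \in A} \alpha(ax)$, call it $M$. If $M \geq 1$, I would pick a sequence $a_k x$ with $\alpha(a_k x) \to M$; by the compactness of superlevel sets of $\alpha$, a subsequence converges to some $y \in \overline{Ax}$ with $\alpha(y) = M \geq 1$, so $y$ is stable and we are done. The remaining case is $M < 1$, i.e. the defect can never be pushed up to $1$ along the orbit. Here I would argue for a contradiction: take $y \in \overline{Ax}$ achieving $\alpha(y) = M$ (again using compactness), and study the sublattice $\Lambda \subset y$ realizing the minimum $\covrad(\Lambda) = M < 1$. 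The strategy is to use the $A$-action to \emph{improve} $y$: because the diagonal group acts by scaling coordinates, and $\Lambda$ is a destabilizing sublattice, one can find a one-parameter diagonal flow that expands the short directions in $\Lambda$ without creating any new sublattice of covolume $< M$. This contradicts maximality of $M$ at $y$ and forces $M \geq 1$.

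\smallskip

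\textbf{The main obstacle.} The hard part will be the contradiction step, i.e. showing that whenever $\alpha(y) < 1$ one can always find a diagonal direction that strictly increases $\alpha$. The difficulty is bookkeeping: pushing up the covolume of the offending sublattice $\Lambda$ may simultaneously shrink some \emph{other} sublattice's covolume, so one must choose the diagonal direction carefully — roughly, a direction adapted to the flag of all sublattices with near-minimal covolume, so that all of them are expanded simultaneously. This is essentially a convexity/linear-programming argument in the Lie algebra $\goa$ of $A$: the logarithms of the covolumes of sublattices are (up to the constraint $\sum = 0$ coming from unimodularity) affine functions of the flow parameter, and one needs that the minimum of finitely many such affine functions, subject to $\operatorname{tr} = 0$, can always be increased unless it already equals $0$ — which, in log scale, is exactly the stability threshold. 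I would expect to need a compactness/finiteness input to reduce to finitely many sublattices, and then a Hahn–Banach or Farkas-type separation argument to produce the improving direction; the geometry-of-numbers content (why the threshold is exactly $1$ and not smaller) should come out of the unimodularity constraint $\det y = 1$ forcing the relevant affine functionals to sum to zero.
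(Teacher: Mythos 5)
Your overall philosophy---pass to a maximizer of a stability defect functional $\alpha$ in $\overline{Ax}$ and then argue that a non-stable maximizer is impossible---is the right intuition, but the paper does not, and cannot easily, make this direct optimization work. Instead it adapts McMullen's topological covering argument: one builds an open cover $\{U_k^{x,\varepsilon}\}$ of $A$ indexed by the dimension of the span of nearly-minimal sublattices of $ax$, and deduces from a Lebesgue-covering-type theorem (Theorem 5.1 of McMullen) that $U_n^{x,\varepsilon}\ne\varnothing$, i.e.\ some $ax$ is ``almost stable.'' This is a genuinely different mechanism from what you propose, and the difference matters: your improving-direction step has a real gap.

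First, the claim that ``the logarithms of the covolumes of sublattices are \ldots affine functions of the flow parameter'' is incorrect. Writing $a=\diag{e^{t_1},\ldots,e^{t_n}}$ and expanding $w_\Lambda\in\bigwedge^k\R^n$ in the Pl\"ucker basis $\{e_J\}$, one has $\|a\,w_\Lambda\|^2=\sum_J e^{2\sum_{j\in J}t_j}\,|w_{\Lambda,J}|^2$, so $\log|a\Lambda|$ is a log-sum-exp of linear functionals of $(t_1,\ldots,t_n)$---convex, not affine, unless $\spa(\Lambda)$ is a coordinate subspace. This is exactly why the paper (following McMullen) can only prove and only needs \emph{almost}-affineness of the sublevel sets $\{a:\av{a\Lambda}^{1/k}\le C\}$ (Theorem \ref{finite distance from a group}), not linearity of the functionals themselves.

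Second, even granting a first-order analysis (the derivative of $\log|a\Lambda|$ at $a=1$ \emph{is} linear in the direction $v\in\goa$), the separation argument you gesture at is unfinished. You would need: for every $y$ with $\alpha(y)<1$, there exists traceless $v$ whose derivative is simultaneously positive against \emph{every} minimizing sublattice. The Farkas obstruction---that $0$ lies in the convex hull of the projected gradient vectors $\{\bar c(\Lambda_i)\}$ to the traceless hyperplane---is not ruled out by $\alpha(y)<1$ in your sketch, and no separation theorem gives you this for free. Moreover, a strict first-order improvement is not enough: as $a$ moves, sublattices that were only \emph{nearly} minimal can become minimal, so one must control $\on{Min}_\delta(ax)$ for a range of $\delta$ uniformly. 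This bookkeeping of nearly-minimal sublattices is exactly what the paper's Lemmas \ref{for the inradius}, \ref{not growing lemma}, and \ref{flat things} do, and it is why the argument ends up being global and topological rather than a local perturbation. Indeed, this is the same obstruction that motivated McMullen's covering theorem in the well-rounded setting, where a na\"ive ``maximize the shortest vector'' argument is known to fail; the authors' decision to import that machinery rather than optimize $\alpha$ directly is strong evidence that the gap in your sketch is not merely cosmetic.

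One thing you handle cleanly that the paper handles differently: you observe that the supremum of $\alpha$ over $\overline{Ax}$ is attained using compactness of the superlevel sets $\{\alpha\ge\varepsilon\}$, which sidesteps any boundedness assumption on the orbit. The paper instead reduces to bounded orbits via Birch--Swinnerton-Dyer and Lemmas \ref{lem: block stable is stable}--\ref{BSD}; that reduction is needed because the topological argument itself uses boundedness (to get a uniform lower bound on $c(\Lambda)$ via Mahler's criterion), not merely to guarantee a maximizer. If you could complete the improving-direction step, your route would indeed be shorter; as written it is a program, not a proof.
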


Theorem \ref{thm: main} is inspired by
a breakthrough result of McMullen \cite{McMullenMinkowski}. Recall that a lattice in $\Xn$ is
called {\em well-rounded} if its shortest nonzero vectors span $\R^n$.
In connection with his work on
Minkowski's conjecture, McMullen showed that the closure of any bounded
$A$-orbit in $\Xn$ contains a well-rounded lattice. The
set of well-rounded lattices neither contains, nor is contained in,
the set of stable lattices; while the set of well-rounded lattices has
no interior, the set of stable lattices does, and in fact it occupies
all but an exponentially small volume of $\Xn$ for large $n$. Our proof of Theorem \ref{thm:
  main} closely follows McMullen's. Note however that we do not 
 assume that $Ax$ is bounded. 

We apply Theorem \ref{thm: main} to two problems in the geometry of
numbers. 
Let $x \in \Xn$ be a unimodular lattice. By a {\em symmetric box} in
$\R^n$ we mean a set of the form $ [-a_1, a_1] \times \cdots \times
[-a_n, a_n]$, and we say that a symmetric box is 
 {\em admissible} for $x$ if it contains no nonzero points of
 $x$ in its interior.  The {\em Mordell constant} of $x$
 is defined to be  
\eq{eq: defn const}{
\kappa(x) \df \frac{1}{2^n} \sup_{\BB} \Vol(\BB), 
}
where the supremum is taken over admissible symmetric boxes $\BB$, and
where $\Vol(\BB)$ denotes the volume of $\BB$.
We also write 
\eq{eq: defn kappan}{\kappa_n \df \inf\{\kappa(x): x \in \Xn\}.
}
The infimum in this definition is in fact a minimum, and, as with many
problems in the geometry of numbers it is of interest to compute the
constants $\kappa_n$ and identify the lattices realizing the
minimum. However this appears to be a very difficult problem, which so
far has only been solved for $n=2,3$, the latter in a difficult paper
of Ramharter \cite{Ramharter_dim3}. It is also of interest to provide
bounds on the asymptotics of $\kappa_n$, and in \cite{Ramharter_conjecture},
Ramharter conjectured that $\limsup_{n \to \infty} \kappa_n^{1/n\log
  n}>0$. As a simple corollary of Theorem \ref{thm: main}, we validate
Ramharter's conjecture, with an explicit bound: 
\begin{cor}\Name{cor: Ramharter conj}
For all $n \geq 2,$ 
\eq{eq: our bound}{\kappa_n \geq n^{-n/2}.
} 
In particular
$$
\kappa_n^{1/n\log
  n}  \geq n^{-1/2\log n}  \longrightarrow_{n \to \infty} 
 \frac{1}{\sqrt{e}}. 
$$
\end{cor}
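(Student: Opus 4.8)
The plan is to derive the corollary from Theorem~\ref{thm: main} using only two soft features of the Mordell constant as a function on $\Xn$: that it is $A$-invariant, and that it is lower semicontinuous. Granting these, the argument runs as follows. Fix $x\in\Xn$ and, by Theorem~\ref{thm: main}, choose a stable lattice $x_0\in\overline{Ax}$; pick $y_k\in Ax$ with $y_k\to x_0$. Since $\kappa$ is $A$-invariant we have $\kappa(y_k)=\kappa(x)$ for every $k$, so lower semicontinuity gives $\kappa(x)=\liminf_k\kappa(y_k)\ge\kappa(x_0)$. Thus it is enough to prove $\kappa(x_0)\ge n^{-n/2}$ for every stable $x_0$, and for that the only property of stability I will use is that the shortest nonzero vector of $x_0$ has length at least $1$.

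For $A$-invariance: if $g=\diag{t_1,\dots,t_n}\in A$ and $\BB=\prod_i[-a_i,a_i]$, then $g$ carries $\BB$ onto $\prod_i[-t_ia_i,t_ia_i]$ and its interior onto its interior, so $\BB$ is admissible for $x$ exactly when $g\BB$ is admissible for $gx$; since $\prod_i t_i=1$ we get $\Vol(g\BB)=\Vol(\BB)$, and taking suprema yields $\kappa(gx)=\kappa(x)$. For lower semicontinuity I would argue as follows. Let $y_k\to x_0$ and let $\BB=\prod_i[-a_i,a_i]$ be an admissible box for $x_0$ with all $a_i>0$; fix $\eps\in(0,1)$. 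The compact set $\overline{(1-\eps)\BB}$ lies in $\interior(\BB)$, hence meets $x_0$ only at $0$. Because $\lambda_1(y_k)\to\lambda_1(x_0)>0$, the nonzero vectors of the $y_k$ lying in this fixed compact set have length bounded below, and any convergent sequence of them converges to a nonzero vector of $x_0$; since there is none in $\overline{(1-\eps)\BB}$, we must have $\overline{(1-\eps)\BB}\cap y_k=\{0\}$ for all large $k$. Hence $(1-\eps)\BB$ is admissible for such $y_k$ and $\kappa(y_k)\ge 2^{-n}\Vol((1-\eps)\BB)=(1-\eps)^n2^{-n}\Vol(\BB)$. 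Letting $k\to\infty$, then $\eps\to0$, then taking the supremum over $\BB$, gives $\liminf_k\kappa(y_k)\ge\kappa(x_0)$.

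Finally, for a stable lattice $x_0$ every nonzero $v\in x_0$ has $\|v\|_2\ge1$, hence $\|v\|_\infty\ge\|v\|_2/\sqrt n\ge1/\sqrt n$. So the box $\BB_0\df[-1/\sqrt n,\,1/\sqrt n]^n$ contains no nonzero point of $x_0$ in its interior and is therefore admissible, giving
\[
\kappa(x_0)\ \ge\ \frac{1}{2^n}\Vol(\BB_0)\ =\ \frac{1}{2^n}\Big(\frac{2}{\sqrt n}\Big)^{n}\ =\ n^{-n/2}.
\]
Combined with the reduction above this proves $\kappa(x)\ge n^{-n/2}$ for all $x\in\Xn$, hence $\kappa_n\ge n^{-n/2}$, and taking $(n\log n)$-th roots gives the displayed asymptotic statement. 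There is no serious obstacle here once Theorem~\ref{thm: main} is in hand; the one place to be careful is the direction of semicontinuity — one shrinks an admissible box of $x_0$ to a box admissible for the nearby $y_k$, so that the inequality transfers from the stable lattice back to the original $x$ rather than the other way around.
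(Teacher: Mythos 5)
Your proof is correct, and the high-level strategy is the same as the paper's ($A$-invariance of $\kappa$, plus a short-vector bound on stable lattices), but you close the gap between the orbit and its closure differently. You prove lower semicontinuity of $\kappa$ from scratch (the box-shrinking argument) so that the bound $\kappa(x_0)\ge n^{-n/2}$ at the limiting stable lattice $x_0\in\overline{Ax}$ transfers back to $\kappa(x)$. The paper instead avoids any semicontinuity of $\kappa$: it invokes Corollary~\ref{cor: Euclidean} (which is Theorem~\ref{thm: main} combined with continuity of $\alpha_1$) to produce an $a\in A$ \emph{in the orbit itself} with $\alpha_1(ax)\ge 1-\vre$, then bounds $\kappa(ax)=\kappa(x)$ directly by exhibiting the admissible cube of half-side $1/\sqrt n-\vre$ and letting $\vre\to0$. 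Your route costs an extra lemma (semicontinuity of $\kappa$, proved carefully and correctly) but in exchange works with an exactly stable lattice and needs no $\vre$ at the final step; the paper's route pushes the $\vre$ into Corollary~\ref{cor: Euclidean}, where the continuity needed is of $\alpha_1$ rather than of $\kappa$, and is somewhat shorter. Both are sound, and you correctly identified the one subtle point — the direction in which the inequality must transfer.
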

We remark that Corollary \ref{cor: Ramharter conj} could also be
derived from McMullen's results and a theorem of Birch and
Swinnerton-Dyer. We refer the reader to
\cite{gruber} for more information on the possible values of
$\kappa(x), x \in \Xn$, and to the preprint \cite[\S4]{SW3_arxiv} for
slight improvements. 

Our second application concerns Minkowski's conjecture\footnote{It is not clear to us whether
  Minkowski actually made this conjecture.}, 
which posits that for any unimodular lattice $x$, one has 
\eq{eq: Minkowski conj}{
\sup_{u \in \R^n} \, \inf_{v \in x} |N(u-v)| \leq \frac{1}{2^n},
}
where $N(u_1, \ldots, u_d) \df \prod_j u_j.$
Minkowski solved the question for $n=2$ and several
authors resolved the cases $n \leq 5$. 
In \cite{McMullenMinkowski}, McMullen settled the case
$n=6$. In fact, using his theorem on the $A$-action on $\Xn$, McMullen
showed that in arbitrary dimension $n$, 
Minkowski's conjecture is implied by the statement that any well-rounded
lattice $x \subset \R^d$ with $d \leq n$ satisfies 
\eq{eq: covrad}{\covrad(x) \leq \frac{\sqrt{d}}{2},}
where $\covrad(x) \df \max_{u \in \R^d} \min_{v \in x} \|u-v\|$
and $\| \cdot \|$ is the Euclidean norm on $\R^d$. At the time of
writing \cite{McMullenMinkowski}, \equ{eq:
  covrad} was known to hold for well-rounded lattices in dimension at most $6$, and in recent work of
Hans-Gill, Raka, Sehmi and Leetika \cite{hans-gill1, hans-gill2, leetika}, \equ{eq: covrad} has
been proved for well-rounded lattices in dimensions $n=7,8,9$, thus settling Minkowski's question in
those cases. 

Our work gives two new  approaches to Minkowski's conjecture, and each
of 
these approaches yields a new proof of the conjecture in dimensions
$n\leq 7$. A direct
application of Theorem \ref{thm: main} (see Corollary 
\ref{cor: for Minkowski 1}) shows that it follows in 
dimension $n$, from the assertion that for any stable $x \in \Xn$, \equ{eq: covrad} holds. Note
that we do not require \equ{eq: covrad} in dimensions less than
$n$. Using the strategy of Woods and Hans-Gill et al, in Theorem
\ref{thm: use of KZ diagonal} we define a compact subset  $\mathrm{KZS} \subset \R^n$ and a
collection 
of $2^{n-1}$ subsets $\{ \cW(\cI)\}$ of $ \R^n$. We show that the
assertion $\mathrm{KZS} \subset \bigcup_{\cI} \cW(\cI)$ implies
Minkowski's conjecture in dimension $n$. This provides a computational
approach to Minkowski's conjecture.

Secondly, an induction using the naturality of stable lattices,
leads to the following sufficient condition: 

\begin{cor}\Name{cor: Minkowski}
Suppose that for some dimension $n$, for all $d\leq n$, any stable
lattice $x \in \LL_{d}$ which is a local maximum of the function covrad, satisfies
\equ{eq: covrad}. Then \equ{eq: Minkowski conj} holds for any $x \in \Xn$. 
\end{cor}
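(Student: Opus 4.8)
The plan is to combine Corollary~\ref{cor: for Minkowski 1} with an induction on the dimension. By that corollary, to obtain \equ{eq: Minkowski conj} in dimension $n$ it suffices to show that every stable lattice in $\LL_n$ satisfies \equ{eq: covrad}; so I would assume the hypothesis of the present Corollary and prove, by induction on $m$ with $1\le m\le n$, that every stable $x\in\LL_m$ satisfies $\covrad(x)\le\sqrt m/2$. The base case $m=1$ is trivial since $\LL_1=\{\Z\}$ and $\covrad(\Z)=1/2$.

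For the inductive step, fix $m$ and let $S_m\subset\LL_m$ be the set of stable lattices; by Mahler's criterion and a short closedness argument $S_m$ is compact, so the continuous function $\covrad$ attains a maximum $M$ on $S_m$, say at $x_0$. It is enough to prove $M\le\sqrt m/2$, since then every stable lattice in $\LL_m$ has covering radius at most $M$. If $x_0$ happens to be a local maximum of $\covrad$ on all of $\LL_m$, the hypothesis (with $d=m$) immediately gives $M=\covrad(x_0)\le\sqrt m/2$.

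Otherwise there exist $y_j\to x_0$ with $\covrad(y_j)>M$; such $y_j$ cannot lie in $S_m$, so each carries a proper nonzero saturated subgroup of covolume less than $1$. Since $x_0$ is stable, the $y_j$ have a uniform positive lower bound on their shortest vector, and Minkowski's first theorem bounds the covolumes of these subgroups from below; hence, after passing to a subsequence, the spans converge in a Grassmannian and the subgroups converge to a proper nonzero subgroup $\Lambda\subsetneq x_0$ of covolume at most $1$. Stability of $x_0$ forces this covolume to equal $1$, and this in turn forces $\Lambda=x_0\cap V$ with $V\df\spa(\Lambda)$; write $k\df\dim V\in\{1,\dots,m-1\}$ and $\bar x_0\df x_0/\Lambda$, a lattice in $\R^m/V$. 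By the naturality of stable lattices, under the isometries $V\cong\R^k$ and $\R^m/V\cong\R^{m-k}$ both $\Lambda$ and $\bar x_0$ are stable unimodular lattices, so by the inductive hypothesis $\covrad(\Lambda)\le\sqrt k/2$ and $\covrad(\bar x_0)\le\sqrt{m-k}/2$. Decomposing $u\in\R^m$ along $V$ and $V^\perp$ and correcting first modulo $V$ (using $0\to\Lambda\to x_0\to\bar x_0\to0$) and then inside $V$ yields the Pythagorean bound
\[
\covrad(x_0)^2\le\covrad(\Lambda)^2+\covrad(\bar x_0)^2\le\frac k4+\frac{m-k}4=\frac m4,
\]
so $M\le\sqrt m/2$ in this case as well; this closes the induction, and taking $m=n$ and invoking Corollary~\ref{cor: for Minkowski 1} proves the Corollary.

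The hard part, I expect, is the extraction in the last case: showing that a sequence of non-stable lattices converging to the stable maximizer $x_0$ produces a covolume-one proper subgroup of $x_0$. This is a compactness argument over the Grassmannian of subspaces combined with Mahler compactness inside each subspace — the relevant covolumes being bounded above by $1$ and, via Minkowski, below — together with the check that the limiting subgroup actually sits inside $x_0$. The remaining ingredients (compactness of $S_m$, the naturality of stability for a covolume-one saturated sublattice and its quotient, and the covering-radius inequality displayed above) are routine, and it is precisely these last two that force the induction over all $d\le n$ rather than an appeal to the hypothesis in dimension $n$ alone.
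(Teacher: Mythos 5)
Your proposal is correct and follows essentially the same route as the paper: reduce via Corollary~\ref{cor: for Minkowski 1} to showing stable lattices satisfy \equ{eq: covrad}, induct on dimension, take a stable maximizer of covrad, split into the local-maximum case (handled by hypothesis) and the boundary case, extract a saturated covolume-one sublattice $\Lambda$ from an approximating sequence of non-stable lattices, check that $\Lambda$ and the orthogonal projection of $x$ onto $(\spa\Lambda)^\perp$ are both stable, and conclude with the Pythagorean covering-radius inequality. The only superficial differences are that you phrase the dichotomy as ``local max or not'' rather than ``interior to the set of stable lattices or not,'' and you spell out the compactness extraction that the paper states tersely; both are equivalent to what is written there.
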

The local maxima of the function covrad have been
studied in depth in recent work of Dutour-Sikiri\'c, Sch\"urmann
and Vallentin \cite{mathieu}, who characterized them and showed that there
are finitely many in each dimension. Dutour-Sikiri\'c has formulated
a Conjecture as to which of these have the largest covering radius
(see Conjecture \ref{conjecture: mathieu}), and has verified his conjecture
computationally in dimensions $n \leq 7$. Our results imply that
Minkowski's conjecture is a consequence of Conjecture \ref{conjecture:
  mathieu}.

\subsection{Acknowledgements} Our work was inspired by Curt McMullen's
breakthrough paper \cite{McMullenMinkowski}
and many of our arguments are adaptations of
arguments appearing in \cite{McMullenMinkowski}. 
We are also grateful to Curt McMullen for additional insightful remarks, and in
particular for the suggestion to study the set of stable lattices in
connection with the $A$-action on $\Xn$. We also thank Mathieu
Dutour-Sikiri\'c, Rajinder Hans-Gill, G\"unter Harder, Gregory Minton and
Gerhard Ramharter for useful discussions. 

We are grateful to the referee for helping us  improve the
presentation of our results. 
A previous version of this paper, which included several other results, was
circulated under the title `On stable lattices and the diagonal
group.' At the referee's suggestion, the current version
presents our main results but omits others. For the original version
the reader is referred to \cite{SW3_arxiv}. Additional results will
appear elsewhere. 

The authors' work was
supported by ERC starter grant DLGAPS 279893, the Chaya fellowship, and ISF grants 190/08 and
357/13.

\section{Orbit closures and stable lattices}
Given a lattice $x\in \Xn$ and a subgroup $\Lam \subset x$, we denote by
$r(\Lam)$ the rank of $\Lam$ and by $\av{\Lam}$ the covolume of $\Lam$
in the linear subspace $\spa(\Lambda)$. 
Let 
\begin{align}\label{alpha}
\nonumber \cV(x)&\defi\set{\av{\Lam}^{\frac{1}{r(\Lam)}}: \Lam \subset
  x 
}, \\
\al(x)&\defi\min\cV(x).
\end{align}
Since we may take $\Lam = x$ we have 
$\alpha(x) \leq 1$ for all $x \in \Xn$, and $x$ is stable precisely if $\alpha(x)=1$. 
Observe that $\cV(x)$ 
is a countable discrete subset of the positive reals, and hence the
minimum in 
\eqref{alpha} is attained. 
Also note that the function $\al$ is a variant of the `length of the shortest
  vector'; it is continuous and the sets $\{x: \alpha(x) \geq \vre\}$
  are an exhaustion of $\Xn$ by compact sets. 

We begin by explaining the strategy for proving Theorem \ref{thm:
  main}, which is identical to the one used by McMullen. 
For a lattice $x\in X$ and $\vre>0$  we define an open cover  
$\cU^{x,\vre}=\set{U^{x,\vre}_k}_{k=1}^n$ 
of the diagonal group $A$, where if $a\in U^{x,\vre}_k$ then $\al(ax)$
is `almost attained' by a subgroup of rank $k$. In particular,  
if $a\in U^{x,\vre}_n$ then $ax$ is `almost stable'. 
The main point is to show that for any $\vre>0$, $U^{x,\vre}_n \neq
\varnothing$; for then, taking $\vre_j \to 0$ and $a_j \in A$ such
that $a_j\in U_n^{x,\vre_j}$, we find (passing to a subsequence) that
$a_jx$ converges to a stable lattice. 

In order to establish that
$U_n^{x,\vre}\ne\varnothing$, we apply a topological result of McMullen
(Theorem~\ref{topological input}) regarding open covers  
which is reminiscent of the classical result of Lebesgue
that asserts that in an open cover of Euclidean $n$-space by bounded balls
there must be a point which is covered $n+1$ times. We will work to
show that the 
cover $\cU^{x,\vre}$ satisfies the assumptions of
Theorem~\ref{topological input}. We will be able to verify these assumptions
when the orbit $Ax$ is bounded. In~\S\ref{sec: reduction to compact orbits} we reduce the proof of
Theorem~\ref{thm: main} to this case.

\subsection{Reduction to bounded orbits}\Name{sec: reduction to compact orbits}
Using a result of Birch and Swinnerton-Dyer, we
will now show that it suffices to prove  
Theorem~\ref{thm: main}
under the assumption that the orbit $Ax\subset \Xn$ is bounded; that is,
that $\overline{Ax}$ is compact. 
In this subsection we will denote $A,G$ by $A_n, G_n$ as various dimensions will appear. 

For a matrix $g\in G_n$ we denote by $\br{g}\in \Xn$ the corresponding lattice. If 
\eq{block form}{
g=\mat{g_1&*&\dots&*\\ 0& g_2&\dots&\vdots \\ \vdots& &\ddots&* \\ 0&\dots&0&g_k}
}
where $g_i\in G_{n_i}$ for each $i$, then we say that $g$ is in
\textit{upper triangular block form} 
and refer to the $g_i$'s as the \textit{diagonal blocks}. Note 
that in this definition, we insist that
each $g_i$ is of determinant one. 

\begin{lemma}\Name{lem: block stable is stable}
Let $x=\br{g}\in \Xn$  where $g$ is in upper triangular block form as
in~\eqref{block form} and for each $1\le i\le k$, $\br{g_i}$ is  
a stable lattice in $\mathcal{L}_{n_i}$. Then $x$ is stable.
\end{lemma}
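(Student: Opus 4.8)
The plan is to argue by induction on the number $k$ of diagonal blocks, the case $k=1$ being exactly the hypothesis. For the inductive step put $n'\defi n_1+\cdots+n_{k-1}$ and let $W\defi\R^{n'}\times\{0\}^{n_k}\subseteq\R^n$ be the coordinate subspace spanned by the first $n'$ standard basis vectors. The block structure of $g$ supplies two auxiliary lattices. First, the top-left $n'\times n'$ submatrix $g'$ of $g$ is again in upper triangular block form, with diagonal blocks $g_1,\dots,g_{k-1}$, and $\det g'=\prod_{i=1}^{k-1}\det g_i=1$; using only that $g_k$ is invertible one checks that the first $n'$ columns of $g$ span $x\cap W$, so that $x\cap W$, viewed as a lattice in $\R^{n'}$, coincides with $\br{g'}\in\mathcal{L}_{n'}$ and hence is stable by the inductive hypothesis. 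Second, let $\pi\colon\R^n\to\R^n/W$ be the quotient map and equip $\R^n/W$ with its quotient Euclidean metric; then the identification of $\R^n/W$ with $\R^{n_k}$ via the last $n_k$ coordinates is an isometry, under which $\pi(x)=\br{g_k}$, which is stable by hypothesis.

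The heart of the matter is a submodularity estimate for covolumes: for a lattice $\Lam$ in a Euclidean space and a linear subspace $W$, writing $p$ for the orthogonal projection onto $W^\perp$, if $p(\Lam)$ is discrete then
\[
\av{\Lam}\ \geq\ \av{\Lam\cap W}\cdot\av{p(\Lam)}.
\]
I would prove this by choosing a $\Z$-basis of $\Lam\cap W$ (which is automatically saturated in $\Lam$) and extending it to a $\Z$-basis of $\Lam$: splitting off $\spa(\Lam\cap W)$ orthogonally inside $\spa(\Lam)$ gives the \emph{exact} identity $\av{\Lam}=\av{\Lam\cap W}\cdot\av{\Lam_2}$, where $\Lam_2$ denotes the image of $\Lam$ under orthogonal projection onto the complement of $\spa(\Lam\cap W)$ in $\spa(\Lam)$. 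The hypothesis that $p(\Lam)$ is discrete forces $\spa(\Lam\cap W)=W\cap\spa(\Lam)$, so that $p$ restricts on that complement to an injective, distance-nonincreasing linear map sending a basis of $\Lam_2$ to a basis of $p(\Lam)$; hence $\av{\Lam_2}\geq\av{p(\Lam)}$ and the estimate follows.

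Granting the estimate, the lemma drops out. Let $\Lam\subseteq x$ be any nonzero subgroup. Then $p(\Lam)=\pi(\Lam)$ is contained in the lattice $\pi(x)=\br{g_k}$, hence discrete, so the estimate applies and yields $\av{\Lam}\geq\av{\Lam\cap W}\cdot\av{\pi(\Lam)}$. Here $\Lam\cap W$ is a subgroup of the stable lattice $x\cap W$ and $\pi(\Lam)$ is a subgroup of the stable lattice $\br{g_k}$, so each of the two factors is at least $1$ (with the covolume of the trivial group read as $1$), and the two are not simultaneously trivial because $\Lam\neq 0$. Thus $\av{\Lam}\geq 1$ for every nonzero subgroup $\Lam\subseteq x$, which is precisely the statement that $\al(x)=\min\cV(x)=1$; that is, $x$ is stable.

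The only real obstacle I foresee is the submodularity estimate. The care needed there is of two kinds: the metric identifications must be exact (the quotient Euclidean metric on $\R^n/W$ genuinely matches the standard metric on $\R^{n_k}$, and the projections of the ``extra'' basis vectors genuinely form a \emph{basis} of $p(\Lam)$ rather than a system of lower rank), and one must observe that it is precisely the discreteness of $p(\Lam)$ — automatic in our setting since $\pi(x)$ is a genuine lattice — that secures the span equality $\spa(\Lam\cap W)=W\cap\spa(\Lam)$; for an arbitrary, possibly irrational, subspace $\spa(\Lam)$ this can fail, and then the identity degrades to a strict inequality. The remaining points, that the first $n'$ columns of $g$ span $x\cap W$ and that the relevant block determinants equal $1$, are routine.
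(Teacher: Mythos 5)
Your proof is correct and takes essentially the same approach as the paper: split a subgroup $\Lam\subset x$ through the coordinate subspace and bound $|\Lam|$ from below by the product of the covolume of $\Lam$ intersected with the subspace and the covolume of its image under the projection, each of which is at least $1$ by stability of the factor lattices. You are in fact slightly more careful than the printed argument, which asserts the \emph{equality} $|\Lam|=|\Lam\cap V_1|\cdot|\pi(\Lam_2)|$: this can fail (already when $\Lam\cap V_1=\{0\}$ but $\Lam\not\subset V_2$, e.g.\ $\Lam=\Z(e_1+e_2)$), and only the inequality $\ge$ — which you isolate cleanly as a submodularity estimate with the needed discreteness hypothesis made explicit — holds in general and is what the argument actually requires.
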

\begin{proof}
By induction, in proving the Lemma we may assume that $k=2$. Let us
denote the standard basis of $\R^n$ by $\E_1, \ldots, \E_n$, let
us write $n=n_1+n_2$, 
$V_1 \df\on{span}\set{\E_1, \ldots, \E_{n_1}}$,
$V_2 \df \on{span}\set{\E_{n_1+1} \ldots, \E_n}$, and let $\pi: \R^n
\to V_2$ be the natural projection. By construction we have $x \cap
V_1 = [g_1], \pi(x) = [g_2]$. 
 
Let $\Lam \subset x$ be a subgroup, write 
$\Lam_1 \df \Lam\cap V_1$ and choose a direct complement 
$\Lam_2 \subset \Lam$, that is 
$$\Lam=\Lam_1+\Lam_2, \ \ \Lam_1 \cap \Lam_2 = \{0\}.$$ 
We claim that 
\eq{eq: claim 1}{\av{\Lam}=\av{\Lam_1}\cdot\av{\pi(\Lam_2)}.}
To see this we recall that one may compute
$|\Lam|$ via the Gram-Schmidt process. Namely, one begins with a set of generators $v_j$
of $\Lambda$ and successively defines $u_1=v_1$ and $u_j$ is the
orthogonal projection of $v_j$ on $\spa (v_1, \ldots,
v_{j-1})^\perp$. In these terms, $|\Lam| = \prod_j \|u_j\|$. Since $\pi$ is an orthogonal
projection and $\Lam \cap V_1$ is in $\ker \pi$, \equ{eq: claim 1} is clear from the
above description. 

The discrete subgroup
$\Lam_1$, when viewed as a subgroup of $\br{g_1}\in \mathcal{L}_{n_1}$ satisfies
$\av{\Lam_1}\ge 1$ because $\br{g_1}$ is assumed to be
stable. Similarly $\pi(\Lam_2) \subset [g_2] \in \mathcal{L}_{n_2}$
satisfies $\av{\pi(\Lam_2)}\ge 1$, hence 
$\av{\Lam}\ge 1$. 
\end{proof}
\begin{lemma}\Name{lem: compt red}
Let $x\in \Xn$ and assume that $\overline{Ax}$ contains a lattice
$\br{g}$ with $g$ of upper triangular block form 
as in~\eqref{block form}. For each $1\le i\le k$,  
suppose $\br{h_i}\in\overline{A_{n_i}\br{g_i}}\subset \mathcal{L}_{n_i}$. Then there
exists a lattice $\br{h}\in\overline{Ax}$ such that $h$ has the
form~\eqref{block form} with $h_i$ as its diagonal blocks. 
\end{lemma}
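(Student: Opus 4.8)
The plan is to induct on $k$, so that it suffices to treat the case $k=2$, where $g = \smallmat{g_1 & * \\ 0 & g_2}$ with $g_1 \in G_{n_1}$, $g_2 \in G_{n_2}$. Write $A_{n_1} \times A_{n_2}$ for the product of the diagonal groups, which we embed in $A_n$ in the obvious block-diagonal way; the quotient $A_n / (A_{n_1} \times A_{n_2})$ is a one-parameter group, generated by elements $b_t = \diag{e^{t n_2}, \ldots, e^{t n_2}, e^{-t n_1}, \ldots, e^{-t n_1}}$ (with $n_1$ entries $e^{t n_2}$ and $n_2$ entries $e^{-t n_1}$), which scales $V_1$ and $V_2$ oppositely while preserving the determinant. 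The key structural fact is that $b_t$ conjugates the unipotent upper-right block, contracting it as $t \to +\infty$: if $g = \smallmat{g_1 & w \\ 0 & g_2}$ then $b_t g b_t^{-1} = \smallmat{g_1 & e^{t(n_1+n_2)} w \\ 0 & g_2}$ — so actually one wants $t \to -\infty$ to kill the off-diagonal block. Either way, the point is that by flowing along $b_t$ we can, in the limit, replace $g$ by its block-diagonal part; but we do not even need this, since we are working inside $\overline{Ax}$ and are allowed to perturb $g$ within that closure.

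The heart of the argument: given $\br{h_i} \in \overline{A_{n_i}\br{g_i}}$, choose sequences $a^{(i)}_m \in A_{n_i}$ with $a^{(i)}_m g_i \gamma^{(i)}_m \to h_i$ for suitable $\gamma^{(i)}_m \in \SL_{n_i}(\Z)$. Set $a_m \df \diag{a^{(1)}_m, a^{(2)}_m} \in A_n$, which lies in $A_n$ after correcting by a scalar from $A_n / (A_{n_1}\times A_{n_2})$ to restore determinant one; call this correction $c_m \in A_n$, so $a_m c_m \in A_n$ is genuinely in the diagonal group. Then $a_m c_m g$ has block form with diagonal blocks $\lambda_m a^{(1)}_m g_1$ and $\lambda_m^{-1} a^{(2)}_m g_2$ (for an appropriate scalar $\lambda_m$ from $c_m$) and off-diagonal block $\lambda_m^{\,?} a^{(1)}_m \, w \, (a^{(2)}_m)^{-1}$. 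First I would arrange, by further composing with the contracting flow $b_{t_m}$ for $t_m$ chosen depending on $m$, that this off-diagonal block tends to $0$; this is possible because $b_t$ commutes with the block-diagonal part of $a_m c_m$ up to the determinant-correcting scalar, and the contraction rate can be made to beat the (bounded, along a subsequence) growth of the diagonal blocks. After passing to a subsequence so that the diagonal blocks converge — which we may, since $\br{h_i}$ exists means the blocks $a^{(i)}_m g_i$ converge modulo $\SL_{n_i}(\Z)$, and we can lift the limit — we obtain $a_m c_m b_{t_m} g \cdot \gamma_m \to h$ where $h = \smallmat{h_1 & 0 \\ 0 & h_2}$, and $\gamma_m \in \SL_n(\Z)$ is the block-diagonal matrix $\diag{\gamma^{(1)}_m, \gamma^{(2)}_m}$. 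Hence $\br{h} \in \overline{A_n \br{g}} = \overline{A_n x}$ (the last equality because $\br{g} \in \overline{A_n x}$ and $\overline{A_n x}$ is $A_n$-invariant and closed).

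The main obstacle I anticipate is the bookkeeping with the determinant-correcting scalars and ensuring that, after all these corrections, the element we apply still lies in $A_n$ \emph{and} the off-diagonal block genuinely goes to zero rather than merely staying bounded. The delicate point is that killing the off-diagonal block requires $t_m \to -\infty$ (or $+\infty$, with the sign convention fixed once and for all), but the diagonal-block convergence $a^{(i)}_m g_i \gamma^{(i)}_m \to h_i$ only controls the blocks up to $\SL_{n_i}(\Z)$ — so I must first fix lifts $h_i$ of the limit lattices, note that after passing to a subsequence the error $\|a^{(i)}_m g_i \gamma^{(i)}_m - h_i\|$ is summable or at least $\to 0$, and then choose $t_m$ growing slowly enough that $b_{t_m}$ does not disturb this convergence but fast enough to contract $w$. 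Concretely, since $b_t$ acts on the diagonal blocks only through the scalar $\lambda(t)$ (which we can absorb into a further reindexing, or which is harmless because it is a fixed power of $e^t$ and can be compensated), and contracts the off-diagonal block by $e^{-t(n_1+n_2)}$, the choice $t_m \to \infty$ with $t_m$ = (say) $\log m$ suffices. Once this is arranged the conclusion is immediate, and by the $k=2$ case plus induction (applying the lemma first to split off $g_1$ from the rest, then recursing on the lower-right $(n-n_1)$-block) we obtain the general statement. I would also remark that Lemma~\ref{lem: block stable is stable} is not logically needed for this lemma but is the natural companion: together they say that the block-diagonal subgroups of $A_n$ interact with $\overline{A_n x}$ and with stability exactly as one would hope, which is what makes the inductive reduction in \S\ref{sec: reduction to compact orbits} go through.
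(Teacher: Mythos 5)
Your approach has a genuine gap, and it stems from aiming for more than the lemma asks.  The conclusion only requires $h$ to have the \emph{upper block triangular} form~\eqref{block form} with $h_i$ on the diagonal; it does not require $h$ to be block-diagonal.  You are trying to kill the off-diagonal block, and this leads you into the $b_{t_m}$-flow, which is where things break.  The flow $b_t = \diag{e^{tn_2}I_{n_1},\, e^{-tn_1}I_{n_2}}$ scales the diagonal blocks by $e^{tn_2}$ and $e^{-tn_1}$ respectively, so $b_{t_m} a_m [g]$ has $V_1$-intersection of covolume $e^{t_m n_1 n_2}$ and $V_2$-projection of covolume $e^{-t_m n_1 n_2}$.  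With $t_m\to\infty$ these leave the set of lattices expressible in the form~\eqref{block form} (which requires each diagonal block in $\SL_{n_i}(\R)$), and moreover $b_{t_m}a_m[g]$ may simply diverge in $\Xn$ (e.g.\ in the $\SL_2$ case this is the classical non-convergence of $\diag{e^t,e^{-t}}\smallmat{1 & w\\0&1}\Z^2$ for badly-approximable $w$).  The hand-wave that the scalar can be ``compensated'' or ``absorbed into a reindexing'' is precisely the gap; there is no group element left in $A_n$ to absorb it, since you have already used all of $A_n$.

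A separate smaller error: no determinant-correcting scalar $c_m$ is needed.  Each $a^{(i)}_m$ lies in $A_{n_i}\subset\SL_{n_i}(\R)$, so $\diag{a^{(1)}_m,a^{(2)}_m}$ already has determinant $1\cdot 1 = 1$ and lies in $A_n$; the block-diagonal $\widetilde A = A_{n_1}\times\cdots\times A_{n_k}$ is literally a subgroup of $A_n$.  The paper's proof exploits exactly this: apply $\tilde a_j\in\widetilde A\subset A_n$ and \emph{do not} try to kill the off-diagonal block.  Instead, observe that the set $\Omega$ of lattices expressible in the fixed form~\eqref{block form} is closed in $\Xn$, and the projection $\tau:\Omega\to\prod_i\cL_{n_i}$ recording the diagonal blocks is $\widetilde A$-equivariant with compact fibers (the off-diagonal data lives in a compact nilmanifold).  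Since $\tau(\tilde a_j[g]) = (a^{(j)}_1[g_1],\ldots)\to([h_1],\ldots,[h_k])$ converges, properness of $\tau$ gives a convergent subsequence of $\tilde a_j[g]$ in $\Omega$; the limit lies over $([h_1],\ldots,[h_k])$ and hence has the required form, off-diagonal block and all.  This avoids the flow $b_{t_m}$ entirely and does not require any delicate choice of $t_m$.
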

\begin{proof}
Let $\Omega$ be the set of all lattices $[g]$ of a fixed triangular
form as in ~\eqref{block form}. Then $\Omega$ is a closed subset of
$\Xn$ and there is a projection 
$$\tau: \Omega \to \mathcal{L}_{n_1} \times
\cdots \times \mathcal{L}_{n_k}, \ \ \tau(\br{g}) =
(\br{g_1},\dots,\br{g_k}).$$ 
The map $\tau$ has a compact fiber and is equivariant with respect to the action
of $\widetilde{ A} \df A_{n_1} \times \cdots \times A_{n_k}$. 
By assumption, there is a sequence  $\tilde{a}_j = \left(a^{(j)}_1,
\ldots, a^{(j)}_k\right), \ a^{(j)}_i \in A_{n_i}$ in
$\widetilde{ A}$ such that $a^{(j)}_i [g_i] \to [h_i]$, then after passing to
a subsequence, $\tilde{a}_j [g] \to [h]$ where $h$ has the required
properties. Since $\overline{Ax} \supset \overline {\widetilde{A}[g]}$, the claim
follows. 
\end{proof}
\begin{lemma}\label{BSD}
Let $x\in \Xn$. Then there is $[g] \in \overline{Ax}$ such that, up to
a possible permutation of the coordinates,
$g$ is of upper triangular block form as in~\eqref{block form} and 
each $A_{n_i}\br{g_i}\subset \mathcal{L}_{n_i}$ is bounded.
\end{lemma}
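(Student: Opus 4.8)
The plan is to prove the lemma by induction on $n$, peeling off one diagonal block at a time, using the theorem of Birch and Swinnerton-Dyer to produce the first splitting and Lemma~\ref{lem: compt red} to recombine the pieces. When $A_nx$ is bounded (in particular when $n=1$) there is nothing to prove: take $k=1$, let $g$ be any representative of $x$, and note that the unique diagonal block is $x$ itself, whose $A_n$-orbit is bounded by hypothesis.

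So suppose $n\ge 2$ and that $A_nx$ is \emph{not} bounded. Here I would invoke the theorem of Birch and Swinnerton-Dyer~\cite{BSwD}: the failure of Mahler's compactness criterion along $A_nx$ is always witnessed by a degenerating flag of rational subspaces, and consequently $\overline{A_nx}$ contains a lattice $\br{g'}$ which, after a permutation of the coordinates, is in upper triangular block form~\eqref{block form} with at least two diagonal blocks $g'_1,\dots,g'_{k'}$, where $g'_i\in G_{m_i}$, $\sum_i m_i=n$, and each $m_i<n$. (If Birch--Swinnerton-Dyer can be quoted in the stronger form that already produces a block lattice whose diagonal blocks have bounded orbits, the remaining induction is unnecessary; I include it in case only this one-step splitting is available from the cited source.) Now apply the inductive hypothesis to each $\br{g'_i}\in\mathcal{L}_{m_i}$ to obtain $\br{h_i}\in\overline{A_{m_i}\br{g'_i}}$ which, after a permutation of the $m_i$ coordinates of the $i$-th block, is in upper triangular block form with every diagonal sub-block having bounded orbit in the relevant $\mathcal{L}_\bullet$. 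By Lemma~\ref{lem: compt red}, applied to $\br{g'}\in\overline{A_nx}$ together with the $\br{h_i}$, there is $\br{h}\in\overline{A_nx}$ in upper triangular block form~\eqref{block form} with the $h_i$ as its diagonal blocks. Concatenating the inner block decompositions of the $h_i$ with this outer one, and composing the outer coordinate permutation with the block-diagonal permutation assembled from the inner ones, exhibits $\br{h}$ as a lattice of the required shape and closes the induction.

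The one genuinely non-formal step is this use of Birch--Swinnerton-Dyer in the unbounded case: the hard part is knowing that divergence of $A_nx$ always arises from a degenerating rational flag, so that $x$ can be moved \emph{inside} its orbit closure to an honestly block-triangular lattice rather than merely escaping to the cusp of $\mathcal{L}_n$. Everything else is bookkeeping. The permutations arising at the various levels compose to a single permutation of $\{1,\dots,n\}$, since an inner permutation of the $i$-th block is realized by a permutation matrix supported on the $i$-th block of indices; and both ``$A$-orbit bounded'' and ``upper triangular block form'' are unaffected by simultaneously permuting the coordinates and passing to the conjugate diagonal group, the latter causing no trouble because $A$ is the full group of positive diagonal determinant-one matrices and is therefore normalized by permutation matrices.
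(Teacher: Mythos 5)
Your proof is correct and follows essentially the same route as the paper: handle the bounded case trivially, invoke Birch--Swinnerton-Dyer to peel off a block splitting when $Ax$ is unbounded, apply the inductive hypothesis to the diagonal blocks, and recombine via Lemma~\ref{lem: compt red}. The paper's version of the BSD input produces exactly a two-block splitting ($k=2$), so the one-step version you hedge about in your parenthetical is indeed what is available and the induction is needed, just as you carry it out.
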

\begin{proof}
If the orbit $Ax$ is bounded there is nothing to prove. According to
Birch and Swinnerton-Dyer \cite{BirchSD}, if $Ax$ is
unbounded 
then $\overline{Ax}$ contains a lattice with a
representative as in~\eqref{block form} (up to a possible permutation
of the coordinates) with $k=2$. Now the claim follows using 
induction and appealing to
Lemma~\ref{lem: compt red}. 
\end{proof}
\begin{proposition}\label{copt red prop}
It is enough to establish Theorem~\ref{thm: main} for
lattices having a bounded $A$-orbit.
\end{proposition}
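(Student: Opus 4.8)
The plan is to obtain the Proposition by stringing together Lemmas~\ref{lem: block stable is stable}, \ref{lem: compt red} and~\ref{BSD}. So suppose Theorem~\ref{thm: main} has been established for every lattice (in every dimension) whose $A$-orbit is bounded, and let $x\in\Xn$ be arbitrary; we must exhibit a stable lattice in $\overline{Ax}$. By Lemma~\ref{BSD} there is $[g]\in\overline{Ax}$ which, after a permutation of the coordinates, is in upper triangular block form as in~\eqref{block form}, with each orbit $A_{n_i}[g_i]\subset\mathcal{L}_{n_i}$ bounded. If $k=1$ then $Ax$ is itself bounded and there is nothing to do, so we may assume $k\ge 2$; then $n_i<n$ for every $i$.

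Before assembling the lemmas I would dispose of the coordinate permutation appearing in Lemma~\ref{BSD}. The finite group of coordinate permutations acts on $\Xn$ by isometries of $\R^n$ and normalizes $A$; consequently, for a permutation matrix $P$, the map $[m]\mapsto P[m]$ (which coincides with $[m]\mapsto[PmP^{-1}]$ on $\Xn$) is a homeomorphism carrying $A$-orbit closures onto $A$-orbit closures and preserving the covolume of every subgroup, hence preserving the set of stable lattices. Applying this with the $P$ coming from Lemma~\ref{BSD}, it suffices to produce a stable lattice in $\overline{A(Px)}=P\,\overline{Ax}$, and in that orbit closure $[PgP^{-1}]$ is honestly in upper triangular block form with diagonal blocks $g_1,\dots,g_k$, each having bounded $A_{n_i}$-orbit. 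Now invoke the hypothesis — Theorem~\ref{thm: main} in the bounded case, in dimension $n_i<n$ — once for each $i$ to obtain a stable lattice $[h_i]\in\overline{A_{n_i}[g_i]}\subset\mathcal{L}_{n_i}$. Lemma~\ref{lem: compt red}, applied to $[PgP^{-1}]\in\overline{A(Px)}$, then yields $[h]\in\overline{A(Px)}$ whose diagonal blocks are $h_1,\dots,h_k$, and Lemma~\ref{lem: block stable is stable} shows that this $[h]$ is stable. Pulling back by $P^{-1}$ (again an isometry, so stability is preserved) gives a stable lattice in $\overline{Ax}$, as required.

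I do not expect any genuine obstacle: the real content — notably the Birch--Swinnerton-Dyer degeneration behind Lemma~\ref{BSD} — has already been packaged into the lemmas, and what remains is bookkeeping. The two points worth stating carefully are the ones I singled out above: that permuting coordinates changes neither the orbit-closure picture nor the notion of stability, and that the blocks produced by Lemma~\ref{BSD} (when $Ax$ is unbounded) live in strictly smaller dimension, which is exactly what makes it legitimate to read the Proposition as the inductive step and to invoke the bounded case of Theorem~\ref{thm: main} in dimensions below $n$.
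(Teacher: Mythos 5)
Your argument is correct and follows exactly the paper's route: Lemma~\ref{BSD} produces the block-form lattice in $\overline{Ax}$, Lemma~\ref{lem: compt red} upgrades the diagonal blocks to stable lattices using the bounded-orbit hypothesis, and Lemma~\ref{lem: block stable is stable} concludes (your careful treatment of the coordinate permutation makes explicit what the paper leaves implicit). One minor slip: the statement of Lemma~\ref{BSD} with $k=1$ does not assert that $Ax$ itself is bounded, only that some $[g]\in\overline{Ax}$ has a bounded $A$-orbit --- but that is all you need, since the hypothesis then yields a stable lattice in $\overline{A[g]}\subset\overline{Ax}$.
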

\begin{proof}
Let $x\in \Xn$ be arbitrary. By Lemma~\ref{BSD}, $\overline{A x}$
contains a lattice $\br{g}$ with $g$ of upper triangular block form
(up to a possible permutation of the coordinates)
with diagonal blocks representing lattices with bounded orbits under
the corresponding diagonal groups. Assuming Theorem~\ref{thm: main}
for lattices having bounded orbits, and applying Lemma~\ref{lem: compt
  red} we may take $g$ whose diagonal blocks represent  
stable lattices. By Lemma~\ref{lem: block stable is stable}, $\br{g}$ is
stable as well.
\end{proof}

\subsection{Some technical preparations}
We now discuss the  subgroups of a lattice $x\in \Xn$ which
almost attain the minimum $\al(x)$ in~\eqref{alpha}.  
 \begin{definition}\label{bn}
Given a lattice $x\in \Xn$ and $\del>0$, let 
\begin{align*}
\on{Min}_{\del}(x)&\defi\set{\Lam \subset x:\av{\Lam}^{\frac{1}{r(\Lam)}}<(1+\del)\al(x)},\\
\tb{V}_{\del}(x)&\defi\on{span}\left( \bigcup \left\{ \Lambda: \Lambda \in
  \on{Min}_{\del}(x) \right \} \right),\\
\dim_\del(x)&\defi\dim\tb{V}_{\del}(x).
\end{align*}
\end{definition}
We will need the following technical statement. 
\begin{lemma}\label{for the inradius}
For any $\rho>0$ 
there exists a
neighborhood of the identity $W\subset 
G$ with the  
following property. Suppose  $ 2\rho \leq \delta_0 \leq
d+1$ and suppose  $x\in \Xn$ is 
such that 
$\dim_{\delta_0-\rho}(x)=\dim_{\delta_0+\rho}(x)$. 
Then for any $g\in W$ and any 
$\del\in \left(\del_0-\frac{\rho}{2},\del_0+\frac{\rho}{2} \right)$ we have 
\begin{equation}\label{eq 1806}
\tb{V}_{\del}(gx)=g\tb{V}_{\del_0}(x).
\end{equation}
In particular, there is $1 \leq k \leq n$ such that  
for any $g\in W$ and any $\del\in
\left(\del_0-\frac{\rho}{2},\del_0+\frac{\rho}{2} \right)$, 
$\dim_\del(gx)=k$. 
\end{lemma}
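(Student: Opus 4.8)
The plan is to establish the stated continuity/stability property of $\tb{V}_\del$ by a compactness-plus-discreteness argument, exploiting that $\cV(x)$ is a discrete set and that the relevant subgroups $\Lam$ realizing the near-minimum are controlled in a neighborhood of $x$. First I would fix $\rho > 0$ and analyze, for a given $x$ with $\dim_{\del_0-\rho}(x) = \dim_{\del_0+\rho}(x)$, the finite collection of subgroups $\Lam \subset x$ with $\av{\Lam}^{1/r(\Lam)} < (1+\del_0+\rho)\al(x)$; there are only finitely many such $\Lam$ up to the relevant span, since $\cV(x)$ is discrete and the covolume bound together with Minkowski-type estimates bounds the "complexity" (e.g. a set of generators of bounded norm) of such $\Lam$. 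Call this finite family $\cF$. The key observation is that for $g$ close enough to the identity, the covolumes $\av{g\Lam}^{1/r(g\Lam)}$ and the value $\al(gx)$ vary continuously, and no \emph{new} subgroups of $gx$ can enter the relevant range: any subgroup of $gx$ is $g\Lam'$ for some $\Lam' \subset x$, and those $\Lam'$ not in (a slight enlargement of) $\cF$ have $\av{\Lam'}^{1/r(\Lam')}$ bounded away from $(1+\del_0)\al(x)$, hence stay out of the range after a small perturbation.

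More precisely, the steps are: (1) Using discreteness of $\cV(x)$, choose the finite family $\cF$ of subgroups of $x$ with $\av{\Lam}^{1/r(\Lam)} \le (1+\del_0+\rho)\al(x)$, and note the hypothesis $\dim_{\del_0-\rho}(x) = \dim_{\del_0+\rho}(x)$ forces all $\Lam \in \cF$ with value below $(1+\del_0+\rho)\al(x)$ to already have $\on{span}(\Lam) \subset \tb{V}_{\del_0-\rho}(x) = \tb{V}_{\del_0+\rho}(x) =: V$; in particular $\tb{V}_\del(x) = V$ for all $\del$ in the closed interval $[\del_0-\rho,\del_0+\rho]$. (2) Since $\cF$ is finite, pick a neighborhood $W$ of the identity in $G$ (depending only on $\rho$, after a uniformity step — see the obstacle below) small enough that for $g \in W$: (a) $\al(gx)$ lies within a multiplicative factor close to $1$ of $\al(x)$; (b) for each $\Lam \in \cF$, $\av{g\Lam}^{1/r(g\Lam)}$ is within a small factor of $\av{\Lam}^{1/r(\Lam)}$; and (c) every $\Lam' \subset x$ with $\av{\Lam'}^{1/r(\Lam')} > (1+\del_0+\rho)\al(x)$ satisfies $\av{g\Lam'}^{1/r(g\Lam')} > (1+\del_0+\rho/2)\al(gx)$, using that such $\Lam'$ have value bounded \emph{strictly} above the cutoff by discreteness. (3) Conclude that for $g \in W$ and $\del \in (\del_0 - \rho/2, \del_0+\rho/2)$, the subgroups of $gx$ in $\on{Min}_\del(gx)$ are exactly the $g\Lam$ with $\Lam \in \cF$ having small enough value, whose spans all lie in $gV$; conversely each such $g\Lam$ does lie in $\on{Min}_\del(gx)$. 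This gives $\tb{V}_\del(gx) = gV = g\tb{V}_{\del_0}(x)$, which is \eqref{eq 1806}, and taking $k = \dim V$ gives the final assertion.

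The main obstacle is the \textbf{uniformity of $W$ in $x$}: the statement demands a single neighborhood $W$ working for \emph{all} $x$ satisfying the dimension hypothesis, not a $W$ depending on $x$. The finite family $\cF$ and the gap in step (2)(c) both a priori depend on $x$, so one must argue that the perturbation estimates can be made uniform. The way around this is that the estimates in (2)(a)--(2)(c) are really statements about how covolumes of sublattices transform under $g \in G$ acting near the identity: $\av{g\Lam} / \av{\Lam}$ is controlled by the operator norm of $g$ restricted to $\spa(\Lam)$, uniformly over all subspaces, so $\av{g\Lam}^{1/r(g\Lam)} \le \|g\|^{\pm 1}_{\mathrm{op}} \, \av{\Lam}^{1/r(\Lam)}$ with a bound independent of $\Lam$ and hence of $x$. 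Likewise $\al(gx) = \min_\Lam \av{g\Lam}^{1/r(g\Lam)}$ inherits the same two-sided multiplicative control by $\|g\|_{\mathrm{op}}$. Thus choosing $W$ so that $\|g\|_{\mathrm{op}}$ and $\|g^{-1}\|_{\mathrm{op}}$ are within a factor $(1+\del_0+\rho/2)/(1+\del_0+\rho)$ — equivalently, a factor depending only on $\rho$ via the worst case $\del_0 + \rho \le d+1$ — makes every inequality above hold simultaneously for all admissible $x$, which is exactly what is needed. The remaining bookkeeping (that the strict inequality $\av{\Lam'}^{1/r(\Lam')} \ge (1+\del_0+\rho)\al(x)$ for $\Lam' \notin \cF$ survives the perturbation, and that $\on{Min}_\del$ for $\del < \del_0 + \rho/2$ captures precisely the subgroups whose span is in $V$) is then routine.
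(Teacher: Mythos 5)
Your proposal is correct and, once you resolve the uniformity issue in the final paragraph, it arrives at essentially the paper's argument: a neighborhood $W$ of the identity chosen so that the exterior-power action gives a two-sided multiplicative control $c^{-1}\av{\Lam}^{1/r(\Lam)} \le \av{g\Lam}^{1/r(g\Lam)} \le c\av{\Lam}^{1/r(\Lam)}$ uniformly over all discrete subgroups $\Lam$ (hence also $c^{-1}\al(x) \le \al(gx) \le c\al(x)$), followed by the inclusions $g\on{Min}_{\del_0-\rho}(x) \subset \on{Min}_\del(gx) \subset g\on{Min}_{\del_0+\rho}(x)$ and the observation that the hypothesis forces both outer sets to span the same subspace $V$. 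The initial detour through discreteness of $\cV(x)$ and a finite family $\cF$ is unnecessary (and would indeed obstruct uniformity in $x$, as you note); the paper never invokes finiteness and simply works directly with the uniform estimate, which is what your ``obstacle'' paragraph correctly reduces to.
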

\begin{proof}
Let $c>1$ be chosen close enough to 1 so that for $2\rho \leq \delta_0
\leq d+1$ we have
\eq{eq: defn c}{c^2\left(1+\del_0+\frac{\rho}{2} \right) < 1+\del_0 +\rho \ \ \text{and
} \ \frac{1+\del_0-\frac{\rho}{2}}{c^2} > 
1+\del_0-\rho.}
Let $W$ be a small enough neighborhood of the identity
in $G$, so that for any discrete subgroup $\Lam \subset \bR^n$ we have 
\begin{equation}\label{22.2.2}
g\in W \ \ \implies \ \ c^{-1}\av{\Lam}^\frac{1}{r(\Lam)}\le
\av{g\Lam}^\frac{1}{r(g\Lam)}\le c\av{\Lam}^\frac{1}{r(\Lam)}. 
\end{equation}
Such a neighborhood exists since  the linear action of $G$
on $\bigoplus_{k=1}^n\bigwedge^k_1 \R^n$ is continuous, and since 
we can write $|\Lam| = \|v_1 \wedge \cdots \wedge v_r\|$
where $v_1, \ldots, v_r$ is a generating set for $\Lam$.
%
%
It follows from~\eqref{22.2.2} that for any $x\in \Xn$ and $g\in W$ we have 
\eq{22.2.3}{
c^{-1}\al(x)\le \al(gx)\le c\al(x).
}
 Let $\del\in \left(\del_0-\frac{\rho}{2}, \del_0+\frac{\rho}{2}
 \right)$ and $g\in W$. We will show below that 
 \begin{equation}\label{22.2.1'}
g\on{Min}_{\del_0-\rho}(x)\subset
\on{Min}_{\del}(gx)\subset
g\on{Min}_{\del_0+\rho}(x). 
\end{equation}
Note first that 
\equ{22.2.1'} implies the assertion of the Lemma; indeed, since
$\tb{V}_{\del_1}(x) \subset \tb{V}_{\del_2}(x)$ for $\delta_1 < \delta_2$, and since we assumed
that 
$\dim_{\delta_0-\rho}(x)=\dim_{\delta_0+\rho}(x)$, 
we see that 
$\tb{V}_{\del_0}(x)=\tb{V}_{\del}(x)$ for $\delta_0-\rho \leq \delta
\leq \delta_0+\rho$. So by \equ{eq: defn c}, the
subspaces spanned by the two sides of \eqref{22.2.1'}
are equal to $g\tb{V}_{\del_0}(x)$ and \eqref{eq 1806}
follows. 

It remains to prove~\eqref{22.2.1'}. Let
$\Lam\in\on{Min}_{\del_0-\rho}(x)$. Then we find 
\[
\begin{split}
\av{g\Lam}^{\frac{1}{r(g\Lam)}} & \stackrel{\eqref{22.2.2}}{\leq}
c\av{\Lam}^{\frac{1}{r(\Lam)}} \leq c(1+\delta_0 -\rho) \alpha(x) \\ &
\stackrel{\equ{eq: defn c}}{\le}
c^{-1}\left(1+\del_0-\frac{\rho}{2} \right)\al(x) \stackrel{\equ{22.2.3}}{<}(1+\del)\al(gx).
\end{split}\]
By definition this means that $g\Lam\in\on{Min}_{\del}(gx)$ which
establishes the first  inclusion in \eqref{22.2.1'}. The second
inclusion is similar and is left to the reader.  
\ignore{
For the second inclusion, let $\Lam \subset x$ such that
$g\Lam\in\on{Min}_{(\del)}(gx)$. Using the definition and~\eqref{22.2.2},\eqref{22.2.3} we conclude that 
\[\av{\Lam}^{\frac{1}{r(\Lam)}} \leq c\av{g\Lam}^{\frac{1}{r(g\Lam)}}\le
c(1+\del)\al(gx)< c^2 (1+\del_0+\frac{\rho}{2})\al(x).\]
I.e.\ $\Lam\in\on{Min}_{c^2(1+\del_0+\frac{\rho}{2})}(x)$ which
establishes the right inclusion in~\eqref{22.2.1'}. }
\end{proof}

\subsection{The cover of $A$} 
Let $x\in \Xn$ and let $\vre>0$ be given. Define
$\cU^{x,\vre}=\left\{U^{x,\vre}_i \right\}_{i=1}^n$ where 
\begin{equation}\label{the cover}
U^{x,\vre}_k\defi\set{a\in A: \on{dim}_\del(ax)=k\textrm{ for $\del$
    in a neighborhood of }k\vre}. 
\end{equation} 
\begin{theorem}\Name{order of cover}
Let $x\in \Xn$ be such that $Ax$ is bounded. Then for any $\vre \in (0,1)$, 
$U^{x,\vre}_n\neq \varnothing.$ 
\end{theorem}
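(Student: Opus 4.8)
The plan is to apply McMullen's topological input (Theorem~\ref{topological input}) to the open cover $\cU^{x,\vre}=\{U^{x,\vre}_k\}_{k=1}^n$ of the diagonal group $A\cong\R^{n-1}$. That theorem, in the spirit of the Lebesgue covering lemma, guarantees that some point of $A$ lies in the top index set $U^{x,\vre}_n$, provided the cover satisfies two kinds of hypotheses: (a) each $U^{x,\vre}_k$ is genuinely open, and the $U_k$'s do cover $A$; and (b) a combinatorial/coarse-geometric ``linear algebra'' condition relating membership in $U_k$ to the position of $a$ in the Weyl chambers of $A$, which for McMullen takes the form that the sets $U_k$ cannot be pushed off to infinity in the directions singled out by partial flags — this is exactly where boundedness of $Ax$ gets used. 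So the real content is to verify (a) and (b) for this particular cover.

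First I would check that $\cU^{x,\vre}$ is an open cover of $A$. Openness of $U^{x,\vre}_k$ follows from Lemma~\ref{for the inradius}: for any $a_0\in A$ with $\dim_\del(a_0x)=k$ for $\del$ near $k\vre$, one chooses $\rho$ small (with $2\rho\le \delta_0:=k\vre$, which needs $\vre<1$ so that $k\vre\le n\le d+1$ — this is where the hypothesis $\vre\in(0,1)$ enters) so that $\dim_{\delta_0-\rho}(a_0x)=\dim_{\delta_0+\rho}(a_0x)=k$; then the neighborhood $W$ from the Lemma, translated to $a_0$, lies entirely in $U^{x,\vre}_k$. That the $U_k$ cover $A$ is immediate: for any $a\in A$ the function $\del\mapsto \dim_\del(ax)$ is nondecreasing and integer-valued in $[1,n]$, so it is locally constant off a finite set of jumps, and in particular is locally constant near $k\vre$ for all but finitely many $k\in\{1,\dots,n\}$; hence $a$ lies in $U^{x,\vre}_k$ for some (in fact most) $k$.

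Next I would verify the structural hypothesis (b) of Theorem~\ref{topological input}. The idea, following McMullen, is to show that if $a$ moves deep into a face of a Weyl chamber — say $a=a_t$ with certain coordinate ratios tending to $0$ or $\infty$ along a direction corresponding to a proper subspace decomposition — then $\alpha(a_tx)$ is eventually attained (up to factor $1+\del$) by a \emph{proper} sublattice, namely the intersection of $a_tx$ with the contracting subspace, forcing $\dim_\del(a_tx)<n$, i.e.\ $a_t\notin U^{x,\vre}_n$; and moreover the index $k$ attached to $a_t$ is controlled by which face $a_t$ is escaping along. Here is where $Ax$ bounded is essential: if the orbit stays in a compact set, then $\alpha(a_tx)$ is bounded below, so a sublattice can only ``win'' the minimum by virtue of its covolume contracting, which pins down its span as a coordinate subspace determined by the escaping direction; this gives precisely the flag-compatibility of the cover that McMullen's lemma requires. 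I would make this quantitative using \eqref{22.2.2}-style estimates on how covolumes of sublattices scale under $a_t$, splitting a sublattice $\Lam\subset a_tx$ according to its projections to the expanding and contracting coordinate blocks, as in the Gram--Schmidt computation of Lemma~\ref{lem: block stable is stable}. Once (a) and (b) are in place, Theorem~\ref{topological input} yields a point of $A$ covered with the top index, i.e.\ $U^{x,\vre}_n\neq\varnothing$.

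\textbf{Main obstacle.} The routine part is openness and the covering property; the crux is hypothesis (b): formulating the correct ``boundary behavior'' statement for the cover $\cU^{x,\vre}$ and proving that along escaping directions the minimizing sublattice must be supported on the predicted coordinate subspace, with the bookkeeping of indices $k$ matching the combinatorics in Theorem~\ref{topological input}. Getting the quantifiers right — how small $\rho$, how large $t$, uniformity over the compact orbit closure — and checking that this is exactly the hypothesis McMullen's theorem consumes, is where the real work lies; everything else is an adaptation of the estimates already set up in Lemma~\ref{for the inradius}.
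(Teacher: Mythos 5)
Your strategy — apply Theorem~\ref{topological input} to the cover $\cU^{x,\vre}$ — is the right one and is exactly the paper's route, but there is a structural gap in how you deploy it. You propose to verify hypotheses (a) and (b) for the full cover $\{U_1^{x,\vre},\dots,U_n^{x,\vre}\}$ and then read off a point covered $n$ times, hence lying in $U_n^{x,\vre}$. The problem is that hypothesis (b) cannot be verified for the full cover: a connected component of $U_j$ is shown (Lemma~\ref{flat things}, via McMullen's Theorem~\ref{finite distance from a group}) to be $(R,j-1)$-almost affine, so for $k=1$ and $j=n$ you only get $(R,n-1)$-almost affine, whereas Theorem~\ref{topological input} demands $(R,n-2)$. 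More generally, if $j_1<\dots<j_k$ are allowed to reach $n$, the bound $j_1\le n-k+1$ gives a component that is $(R,n-k)$-almost affine, one worse than the required $(R,n-1-k)$. The paper sidesteps this by arguing \emph{by contradiction}: assume $U_n^{x,\vre}=\varnothing$, apply the theorem to the $(n-1)$-element cover $\{U_1,\dots,U_{n-1}\}$ (now all indices satisfy $j_1\le n-k$, so the almost-affine hypothesis \emph{is} met), and deduce that this cover must contain at least $n$ nonempty sets — impossible. Your direct application skips this indispensable reduction.

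Beyond this, your verification of (b) remains at the level of intuition. The paper's actual mechanism is: (i) boundedness of $Ax$ plus Mahler's criterion gives a uniform lower bound $c(\Lambda)\ge c>0$ for all discrete subgroups $\Lambda\subset x$; (ii) the Grassmannian-valued map $\cM(a)=a^{-1}\tb{V}_{k\vre}(ax)$ is shown to be locally constant on $U_k$ (via Lemma~\ref{for the inradius}), pinning a fixed subgroup $\Lambda=x\cap\cM(a)$ to each connected component; (iii) Lemma~\ref{not growing lemma} yields a uniform bound $\av{a\Lambda}^{1/k}\le 1+\psi(k\vre)$ on that component; and (iv) McMullen's Theorem~\ref{finite distance from a group} converts (i) and (iii) into the required almost-affine statement. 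Your sketch gestures at this (coordinate ratios going to infinity, covolume of a contracting sublattice, boundedness preventing collapse), but it does not supply the locally constant map $\cM$ or the incompressibility bound, and it does not explicitly invoke Theorem~\ref{finite distance from a group}, which is where the $(R,j-1)$-almost affine conclusion actually comes from. Your handling of openness and the covering property, and your observation about where $\vre<1$ enters via Lemma~\ref{for the inradius}, are fine.
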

In this subsection we will reduce the proof of Theorem \ref{thm: main}
to Theorem \ref{order of cover}. This will be done via the following
statement, 
which could be interpreted as saying that a 
lattice satisfying $ \dim_{\delta}(x)=n
$ is `almost stable'. 

\begin{lemma}\label{not growing lemma}
For each $n$, there exists a positive function $\psi(\del)$ with
$\psi(\del) \to_{\del\to 0}0$, such that for any $x\in \Xn$,
\eq{eq: lemma first part}{\set{\Lam_i}_{i=1}^\ell\subset\on{Min}_{\del}(x) \ \implies \ 
\Lam_1+\dots+\Lam_\ell\in\on{Min}_{\psi(\del)}(x).
}
In particular, if $\dim_\del(x)=n$ then $\al(x)\ge (1+\psi(\del))^{-1}$.  
\end{lemma}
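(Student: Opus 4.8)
The plan is to prove the key subadditivity statement \eqref{eq: lemma first part}, from which the ``in particular'' clause is immediate: if $\dim_\del(x)=n$ then by definition $\tb{V}_\del(x)=\R^n$, so we can pick finitely many $\Lam_i \in \on{Min}_\del(x)$ whose spans together generate $\R^n$, and hence (since a sum of subgroups of $x$ is again a subgroup of $x$, of full rank) $\Lam \df \Lam_1+\cdots+\Lam_\ell$ is a finite-index subgroup of $x$ with $\spa(\Lam)=\R^n$, so $r(\Lam)=n$ and $\av{\Lam}=[x:\Lam]\ge 1$; applying \eqref{eq: lemma first part} gives $\av{\Lam}^{1/n} < (1+\psi(\del))\al(x)$, whence $\al(x) > (1+\psi(\del))^{-1}$.

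For \eqref{eq: lemma first part} itself I would argue by induction on $\ell$, so the crux is the case $\ell=2$: if $\Lam',\Lam'' \in \on{Min}_\del(x)$ then $\Lam'+\Lam'' \in \on{Min}_{\psi(\del)}(x)$ for a suitable $\psi$. The tool is the standard inequality relating the covolume of a sum to the covolumes of the summands and their intersection. Write $r'=r(\Lam')$, $r''=r(\Lam'')$, $m = r(\Lam'\cap\Lam'')$, and $r = r(\Lam'+\Lam'') = r'+r''-m$. One has the submodularity-type bound
\[
\av{\Lam'+\Lam''}\cdot\av{\Lam'\cap\Lam''} \le \av{\Lam'}\cdot\av{\Lam''},
\]
(which follows from Gram--Schmidt exactly as in the proof of Lemma \ref{lem: block stable is stable}: project everything orthogonally to $\spa(\Lam'\cap\Lam'')$). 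Now $\Lam'\cap\Lam''$ is itself a subgroup of $x$, so $\av{\Lam'\cap\Lam''}^{1/m}\ge\al(x)$, i.e.\ $\av{\Lam'\cap\Lam''}\ge\al(x)^m$; and $\av{\Lam'}<((1+\del)\al(x))^{r'}$, likewise for $\Lam''$. Combining,
\[
\av{\Lam'+\Lam''} \;\le\; \frac{((1+\del)\al(x))^{r'+r''}}{\al(x)^{m}} \;=\; (1+\del)^{r'+r''}\,\al(x)^{\,r},
\]
so $\av{\Lam'+\Lam''}^{1/r} \le (1+\del)^{(r'+r'')/r}\,\al(x) \le (1+\del)^{r'+r''}\al(x)$, and since $r'+r''\le 2n$ this is at most $(1+\del)^{2n}\al(x)$. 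Thus $\psi_2(\del) \df (1+\del)^{2n}-1$ works for $\ell=2$, and it tends to $0$ as $\del\to0$.

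Iterating, after combining $\ell$ subgroups the blow-up exponent is controlled by something like $(1+\del)^{2n}$ at each of the (at most $n$) genuinely dimension-increasing steps — since $\tb{V}_\del(x)$ has dimension at most $n$, only finitely many of the $\ell-1$ sums actually enlarge the span, and sums that do not enlarge the span can be handled by noting the running subgroup stays inside a fixed subspace — so one can take, say, $\psi(\del)\df(1+\del)^{2n^2}-1$, a fixed function of $\del$ (depending only on $n$) with $\psi(\del)\to0$. I expect the main obstacle to be the bookkeeping in the induction: one must phrase the inductive hypothesis so that the exponent does not grow with $\ell$ but only with $n$, which requires tracking when the partial sums increase rank versus when they merely accumulate inside an already-spanned subspace; the cleanest route is probably to first reduce to the case where at each step the rank strictly increases (discarding redundant $\Lam_i$'s), so that $\ell\le n$ and a crude $(1+\del)^{2n}$-per-step bound suffices.
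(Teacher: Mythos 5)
Your proposal is correct and follows essentially the same route as the paper: the submodular covolume inequality $\av{\Lam+\Lam'}\av{\Lam\cap\Lam'}\le\av{\Lam}\av{\Lam'}$, an induction on $\ell$, and a reduction to $\ell\le n$ by passing to a subcollection of the $\Lam_i$ that spans the same subspace as the full sum (so the full sum has finite index in, hence smaller covolume than, the sub-sum). The paper sidesteps the explicit-exponent bookkeeping you flag as the main obstacle by simply defining $\psi_\ell(\del)$ as the maximum of $\bigl((1+\del)^{r}(1+\psi_{\ell-1}(\del))^{r'}\bigr)^{1/r''}-1$ over the finitely many admissible rank triples $(r,r',r'')$, so $\psi_\ell\to0$ is automatic; your claimed explicit bound $(1+\del)^{2n^2}-1$ is in fact attainable, but only if you use the monotonicity of the ranks of the partial sums (so the exponent on the accumulated factor in each step is at most $1$, making the recursion multiplicative-linear rather than iterated-power) --- a point you gesture at but do not spell out, and which is unnecessary since only the existence of some $\psi(\del)\to 0$ is needed.
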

\begin{proof}
Let $\Lam,\Lam'$ be two discrete subgroups of $\bR^d$.
The following inequality is straightforward to 
prove via the Gram-Schmidt procedure for computing $|\Lam|$: 
\begin{equation}\label{volume formula}
\av{\Lam+\Lam'}\le\frac{\av{\Lam}\cdot\av{\Lam'}}{\av{\Lam\cap\Lam'}}.
\end{equation}
Here we adopt the convention that  $\av{\Lam\cap\Lam'}=1$ when
$\Lam\cap\Lam'=\set{0}$. 
By induction on $\ell \leq n$, we now prove 
the existence of a
function  $\psi_\ell(\del)\overset{\del\to0}{\lra}0$ such that for any
$x \in \Xn$ and any $\set{\Lam_i}_{i=1}^\ell\subset
\on{Min}_{\del}(x)$, we have 
$\Lam_1+\dots+\Lam_\ell\in\on{Min}_{\psi_\ell(\del)}(x)$. For
$\ell=1$ one can trivially pick $\psi_1(\del)=\del$.  
Assuming the existence of $\psi_{\ell-1}$,  set 
$$\psi_\ell(\del)\defi \max
\pa{(1+\del)^{r(\Lam)}(1+\psi_{\ell-1}(\del))^{r(\Lam')}}^{\frac{1}{r(\Lam+\Lam')}}-1,$$
where  
the maximum is taken over all possible values of $r(\Lam), r(\Lam'),
r(\Lam+\Lam')$. Clearly $\psi_\ell(\del)\lra_{\del\to 0}0$, and given
$x \in \Xn$ and $\Lam_1, \ldots, \Lam_{\ell} \in 
\on{Min}_{\del}(x),$ set $\Lam=\Lam_1$,
$\Lam'=\Lam_2+\dots+\Lam_\ell$, $\al=\al(x)$ and note  
that $r(\Lam+\Lam')=r(\Lam)+r(\Lam')-r(\Lam\cap\Lam')$. We deduce
from~\eqref{volume formula} and the definitions that 
\begin{align*}
\nonumber \av{\Lam+\Lam'}&\le
\frac{\av{\Lam}\cdot\av{\Lam'}}{\av{\Lam\cap\Lam'}}
\le
\frac{\pa{(1+\del)\al}^{r(\Lam)}\pa{(1+\psi_{\ell-1}(\del))\al}^{r(\Lam')}}{\al^{r(\Lam\cap\Lam')}}\\  
&=(1+\del)^{r(\Lam)}(1+\psi_{\ell-1}(\del))^{r(\Lam')}\al^{r(\Lam+\Lam')},
\end{align*}
and so 
$\Lam+\Lam'\in\on{Min}_{\psi_\ell(\del)}(x)$ as desired. This
completes the inductive step.

We take
$\psi(\del)\defi\max_{\ell=1}^n\psi_\ell(\del).$ If $\ell \leq n$ then
\equ{eq: lemma first part} holds by construction. If $\ell >n$  one can find a subsequence 
$1\le i_1<i_2\dots<i_d\le n$  such that
$r(\sum_{i=1}^\ell\Lam_i)=r(\sum_{j=1}^d\Lam_{i_j})$ and in
particular,  
$\sum_{j=1}^d\Lam_{i_j}$ is of finite index in $\sum_{i=1}^\ell\Lam_i$. From the first part of the 
argument we see that $\sum_{j=1}^d\Lam_{i_j}\in \on{Min}_{\psi(\del)}(x)$ and as the covolume of 
$\sum_{i=1}^\ell\Lam_i$ is not larger than that of $\sum_{j=1}^d\Lam_{i_j}$ we deduce that 
$\sum_{i=1}^\ell\Lam_i \in\on{Min}_{\psi_\ell(\del)}(x)$ as well.

To verify the last assertion, note that
when $\dim_\del(x)=n$, \equ{eq: lemma first part} implies the
existence of a finite index subgroup $x'$ 
of $x$ belonging to $\on{Min}_{\psi(\del)}(x)$. In particular, 
$1\leq \av{x'}^{\frac{1}{n}}\le(1+\psi(\del))\al(x)$ as desired.
\end{proof}

\ignore{
The following statement essentially 
says that a lattice $x$ satisfying $\dim_\del(x)=n$ with $\del$ small
can be considered to be `almost stable'.
\begin{corollary}\label{hitting stable cor}
If $x_j\in \Xn$ satisfies $\dim_{\del_j}(x_j)=n$ for some sequence
$\del_j \to 0$, then any accumulation point of 
$\set{x_j}$ is a stable lattice.
\end{corollary}
\begin{proof}
By Lemma~\ref{not growing lemma} we have  
$$1\ge
\limsup\al(x_j)\ge \liminf \al(x_j)\ge \lim (1+\psi(\del_j))^{-1}=1,$$ 
which shows that $\lim\al(x_j)=1$.  
The function $\al$ is continuous on $\Xn$ and therefore if $x$  is an
accumulation point of $\set{x_j}$ then $\al(x)=1$, i.e. $x$ is stable.
\end{proof}
}
\begin{proof}[Proof of Theorem~\ref{thm: main} assuming
  Theorem~\ref{order of cover}] 
By Proposition~\ref{copt red prop} we may assume that $Ax$ is
bounded. Let $\vre_j \in (0,1)$ so that $\vre_j \to_j 0$. By
Theorem~\ref{order of cover} we know that
$U^{x,\vre_j}_n 
\neq \varnothing$. This means there is a sequence $a_j\in A$ such that 
$\dim_{\del_j}(a_jx)=n$
where $\del_j=n\vre_j\to 0$. 
The sequence $\set{a_jx}$ is
  bounded, and hence has limit points, so passing to a subsequence we
  let $x' \df \lim a_jx.$ 
By Lemma~\ref{not growing lemma} we have  
$$1\ge
\limsup_j\al(a_jx)\ge \liminf_j \al(a_j x)\ge \lim_j (1+\psi(\del_j))^{-1}=1,$$ 
which shows that $\lim_j\al(a_j x)=1$.  
The function $\al$ is continuous on $\Xn$ and therefore $\al(x')=1$,
i.e. $x' \in \overline{Ax}$ is stable.
\end{proof}
\section{Covers of Euclidean space}\Name{establishing
  topological input}  
In this section we will prove Theorem \ref{order of cover}, thus
completing the proof of Theorem \ref{thm: main}. Our main
tool will be McMullen's 
Theorem~\ref{topological input}. Before stating it we introduce some
terminology. We fix an invariant metric on $A$, and let $R>0$ and $k \in \{0, \ldots, n-1\}$. 
\begin{definition}\Name{def: almost affine}
We say that a subset $U\subset A$ is $(R,k)$-\textit{almost affine} if it is
contained in an $R$-neighborhood of a coset of a connected $k$-dimensional 
subgroup of $A$. 
\end{definition}
\begin{definition}\Name{def: inradius}
An open cover $\cU$ of $A$ is said to have \textit{inradius} $r>0$ if
for any $a\in A$ there exists $U\in\cU$ such that $B_r(a)\subset
U$, where $B_r(a)$ denotes the ball in $A$ of radius $r$ around $a$. 
\end{definition}
\begin{theorem}[Theorem 5.1 of~\cite{McMullenMinkowski}]\Name{topological input}
Let $\cU$ be an open cover of $A$ with inradius $r>0$ and let
$R>0$. Suppose that for any $1\le k\le n-1$, 
every connected component $V$ of the intersection of  
$k$ distinct elements of $\cU$ is
$(R,(n-1-k))$-almost affine. Then there is 
a point in $A$ which belongs to  
at least $n$ distinct elements of $\cU$. In particular, there are at
least $n$ distinct non-empty sets in $\cU$. 
\end{theorem}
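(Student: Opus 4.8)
The plan is to identify $A$, via the logarithm map, with $\R^{n-1}$ equipped with a Euclidean metric (the chosen invariant metric on $A$), and to argue by contradiction: assuming no point of $A$ lies in $n$ distinct elements of $\cU$, so that $\cU$ has multiplicity at most $n-1 = \dim A$, I will derive a contradiction. First I would make three harmless reductions. (i) Since $\R^{n-1}$ is Lindel\"of, pass to a countable subcover; the inradius can only decrease and the almost-affine hypotheses only weaken. (ii) Replace each element by its connected components: since distinct components of the same element are disjoint, the multiplicity is unchanged, and, for $k=1$, the almost-affine hypothesis now says that each element is itself contained in an $R$-neighbourhood of a coset of a codimension-one subgroup — a ``slab'' with a well-defined normal direction. (iii) Discard every element that contains no ball of radius $r$: by the inradius hypothesis every such ball still lies in a surviving element, so the surviving family still covers $A$ and still has inradius $r$. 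After (i)--(iii) every element of $\cU$ is a connected set squeezed between two parallel hyperplanes at distance in $[2r,2R]$ and containing a ball of radius $r$; moreover, for $1\le k\le n-1$ the $k$-fold intersections still have components within $R$ of $(n-1-k)$-dimensional cosets, which forces the normals of any $k$ mutually meeting slabs to be uniformly linearly independent and, for $k=n-1$, forces top intersections to have diameter $\le 2R$.

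The core is then a quantitative Lebesgue-type covering theorem: $\R^{n-1}$ cannot be covered with multiplicity $\le n-1$ by such slabs. I would prove this by induction on $n$, slicing by a hyperplane $H\cong\R^{n-2}$. The restriction $\cU|_H$ is an open cover of $H$ with inradius $\ge r$ (a ball of radius $r$ in $H$ lies inside a ball of radius $r$ of $\R^{n-1}$, hence inside some element), so to run the induction one needs the $k$-fold intersection components of $\cU|_H$ to be $(R',n-2-k)$-almost affine; this holds once $H$ is \emph{uniformly} transverse to all the cosets produced by the hypothesis on $\cU$. Because these cosets may be infinite in number with densely distributed normal directions, a generic flat $H$ need not be uniformly transverse to all of them simultaneously, and this is exactly where the inradius re-enters in an essential way: it forbids arbitrarily thin slabs from piling up, and a plank/volume estimate then finitizes the set of directions that matter on any bounded region, so that one can choose $H$ flat and transverse with a definite angle on each ball of an exhaustion and pass to a limit (or, if need be, bend $H$ slightly). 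Granting a good slice, the inductive hypothesis gives a point $p\in H$ lying in $\ge n-1$ elements; a connectedness and local-degree argument across $H$ — using the inradius to exhibit an element whose closure straddles $H$ near $p$, and a parity count to ensure it is not one of the $n-1$ already found — upgrades this to a point in $\ge n$ elements, the desired contradiction. The base case $n=2$ is immediate: $\R$ is connected and its covering slabs are bounded (diameter $\le 2R$), so none of them is all of $\R$ and two of them must overlap.

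I expect the main obstacle to be precisely this interface between the soft geometric data (inradius, almost-affineness) and the rigid combinatorics of the classical Lebesgue theorem: making the sliced cover inherit \emph{uniform} almost-affine constants in the presence of infinitely many slab normals, and performing the ``$+1$'' upgrade (producing a genuinely new straddling element). Both are the places where the inradius hypothesis does real work — without it one could pile up arbitrarily thin slabs in every direction. If the slicing proves unwieldy, the fallback is homological: multiplicity $\le n-1$ forces the nerve $N(\cU)$ to have dimension $\le n-2$, and a subordinate partition of unity gives $f\colon\R^{n-1}\to|N(\cU)|$; using all levels of the almost-affine hypothesis together with the inradius one builds an approximate inverse $g\colon|N(\cU)|\to\R^{n-1}$ with $g\circ f$ proper and of degree $\pm1$, hence onto an open set — impossible for a map factoring through an $(n-2)$-dimensional complex. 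The obstacle is the same: constructing $g$ amounts to peeling off one dimension at a time along the slab normals, which is essentially as hard as controlling the slice.
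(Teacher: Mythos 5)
Your proposal is a plan with an acknowledged gap, and the gap you flag is exactly where the theorem's content lies; neither of your two routes resolves it. For the slicing route: a generic affine hyperplane $H$ has no reason to be uniformly transverse to all the codimension-$(k+1)$ cosets furnished by the almost-affine hypothesis. A component of a $k$-fold intersection lying within $R$ of a codimension-$(k+1)$ coset can, when sliced by $H$, lie only within $R'$ of a codimension-$k$ coset of $H$, with $R'\to\infty$ as the angle between $H$ and the coset degenerates; since infinitely many cosets with densely distributed normal directions can occur, there need be no single $H$ with a definite angle. The inradius bounds how \emph{thin} a slab can be, not how many \emph{directions} slabs can take, so your appeal to a plank/volume estimate to ``finitize the directions'' is a hope, not an argument, and the ``$+1$'' upgrade across $H$ (a new straddling element not already among the $n-1$ found in the slice) is likewise unjustified. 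Your homological fallback identifies the correct endgame but not the move that reaches it: to make a degree argument bite you must first replace $\cU$ by a \emph{uniformly bounded} cover of the \emph{same} order, and constructing such a refinement from a cover whose elements are unbounded slabs is precisely the nontrivial step; ``peeling off one dimension at a time along the slab normals'' restates the difficulty rather than removing it.

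The paper's appendix proves the stronger Theorem~\ref{thm: covering}, of which the present statement is the case $s=0$, and the ingredient your proposal lacks is Proposition~\ref{p2}. There one reinterprets the $(R,m-k)$-almost-affine hypothesis on components of $k$-fold intersections as the statement that these components form a family uniformly of asymptotic dimension $\le m-k$ (here $m=n-1$), and then, working from the deepest intersections outward, inductively glues together local covers (via Lemmas~\ref{lem: for p2}, \ref{p1}, \ref{lemma0036}) to produce a uniformly bounded open refinement of $\cU$ whose order is still $\le m$; the asymptotic-dimension control is what prevents the order from growing at each gluing stage. Only after this step does the homological argument (Proposition~\ref{p3}) apply: a uniformly bounded cover of order $\le m$ of a product of simplices with the right boundary behavior yields a map factoring through an $(m-1)$-dimensional complex, contradicting the nontriviality of the reduced homology of the boundary. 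Without Proposition~\ref{p2}, or a genuine substitute for its uniformly-bounded refinement, neither your slicing induction nor your nerve-and-degree argument closes.
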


\ignore{
\begin{proposition}\label{assumptions hold}
If $x\in \Xn$ has a bounded $A$-orbit and $\vre>0$ then the collection
$\cU^{x,\vre}$ is an open cover of $A$ with positive inradius  
such that at least one of the following two possibilities hold:
\begin{enumerate}
\item $U^{x,\vre}_n\ne\varnothing$.
\item The hypothesis of Theorem~\ref{topological input} are satisfied.
\end{enumerate}
\end{proposition}
\begin{proof}[Proof of Theorem~\ref{order of cover} assuming
  Proposition~\ref{assumptions hold}] 
By the Proposition the possibility that $U^{x,\vre}_d=\varnothing$ is
ruled out as if this is the case then we may apply  
Theorem~\ref{topological input} and deduce that $\cU^{x,\vre}$ must
contain at least $n$ non-empty sets and in particular,  
$U_n^{x,\vre}\ne\varnothing$ which contradicts our assumption.
\end{proof} 
}

\subsection{Verifying the hypotheses of Theorem \ref{topological
    input} }
Below we fix a compact set $K\subset \Xn$ and a lattice $x$ for which
$Ax\subset K$. Furthermore, we fix $\vre>0$ and denote  
the collection $\cU^{x, \vre}$ defined in~\eqref{the cover} by
$\cU=\set{U_i}_{i=1}^n$.
\begin{lemma}\Name{lem: positive inradius}
The collection $\cU$ forms an open cover of $A$ with positive inradius.
\end{lemma}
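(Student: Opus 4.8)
The goal is to show that $\cU=\{U_i\}_{i=1}^n$ is an open cover of $A$ with positive inradius, where $U_k = \{a\in A : \dim_\delta(ax)=k \text{ for } \delta \text{ in a neighborhood of } k\vre\}$. My plan is to apply Lemma~\ref{for the inradius} as the main engine, using that $Ax$ lies in a compact set $K$.

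First I would establish that $\cU$ is a cover of $A$. Fix $a\in A$. For each $k\in\{1,\dots,n\}$, consider the value $\delta=k\vre$; since $\vre\in(0,1)$ we have $0 < k\vre \le n$, so these values lie in the relevant range $[2\rho,d+1]$ once $\rho$ is chosen small enough (here $d=n$). The key observation is that the function $k\mapsto \dim_{k\vre}(ax)$ is nondecreasing in $k$ (since $\tb{V}_{\delta_1}\subset\tb{V}_{\delta_2}$ for $\delta_1<\delta_2$), takes integer values, and satisfies $\dim_{\delta}(ax)\ge 1$ always and $\dim_{\delta}(ax)\le n$. I need to find \emph{some} $k$ for which $\dim_\delta(ax)$ is \emph{locally constant} equal to $k$ near $\delta=k\vre$ — mere attainment of the value $k$ at the point $k\vre$ is not enough for membership in $U_k$. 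Here the pigeonhole idea of McMullen enters: consider the $n$ disjoint intervals $I_k = (k\vre-\vre/2,\,k\vre+\vre/2)$ for $k=1,\dots,n$ (disjoint since consecutive centers are $\vre$ apart). On each $I_k$ the nondecreasing integer-valued function $\dim_{(\cdot)}(ax)$ ranges over some set of consecutive integers; if on \emph{every} $I_k$ it were non-constant, it would increase by at least one across each of the $n$ intervals, forcing a total increase of at least $n$ over a function bounded between $1$ and $n$ — contradiction, unless it starts at $1$ and ends at $n$ with a jump in each interval, but $\dim$ can only take $n$ distinct values $1,\dots,n$ so at most $n-1$ jumps are possible. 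Hence on some $I_k$ the function is constant, necessarily equal to some integer $m$; I then need $m=k$. This last point requires care — I would argue that if $\dim_\delta(ax)=m$ is constant on $I_k$, then by relabeling (or by a more careful counting using that the function is constant on $I_k$ with value equal to the number of intervals $I_1,\dots,I_k$ already "passed"), in fact the constant value on the interval whose constancy we extract equals its index; alternatively, and more robustly, apply Lemma~\ref{for the inradius} with $\delta_0=k\vre$ directly and show membership.

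For the inradius, here is where Lemma~\ref{for the inradius} does the real work. Fix $\rho>0$ small (to be chosen below), and let $W\subset G$ be the neighborhood of the identity it provides. For each point $y=ax$ in the compact orbit closure $K$, and each of the finitely many values $\delta_0\in\{\vre,2\vre,\dots,n\vre\}$, the hypothesis $\dim_{\delta_0-\rho}(y)=\dim_{\delta_0+\rho}(y)$ may fail; but I claim that for each fixed $a$, at least one relevant $\delta_0=k\vre$ satisfies a constancy that lets Lemma~\ref{for the inradius} apply and conclude $\dim_\delta(gax)=k$ for all $g\in W$ and all $\delta\in(k\vre-\rho/2,k\vre+\rho/2)$ — i.e. $gax\in U_k$ for the appropriate $k$. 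Choosing $\rho < \vre$ ensures the neighborhood of $k\vre$ on which $\dim=k$ contains an interval, so $U_k$ genuinely contains a $W$-neighborhood of $a$. Translating: there is a fixed neighborhood $W$ of identity in $G$ such that for every $a\in A$, some $U_k$ contains $Wa\cap A \supset B_r(a)$ where $r>0$ is a uniform radius coming from $W\cap A$. The compactness of $K$ is used to get a \emph{single} $W$ working for all $a$ (the $\rho$, hence $W$, in Lemma~\ref{for the inradius} does not depend on $x$ at all, so this is automatic).

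\textbf{Main obstacle.} The delicate point is the matching of indices: showing that the $k$ for which $\dim$ is locally constant near $k\vre$ is the \emph{same} $k$ as the constant value, so that we land in $U_k$ and not some $U_m$ with $m\ne k$ (which would still give a cover, but one must be careful the bookkeeping closes). I expect to handle this by the monotonicity-plus-counting argument: order the "jump points" of $\delta\mapsto\dim_\delta(ax)$; since $\dim$ goes from its value at $\delta$ near $0$ up to at most $n$, and we have $n$ candidate windows, a clean pigeonhole shows the constant value on the $k$-th surviving window is exactly $k$. The rest — openness of each $U_k$, and uniformity of the inradius — follows formally from Lemma~\ref{for the inradius} and compactness of $K$, since the neighborhood $W$ there is independent of the lattice.
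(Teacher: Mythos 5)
Your overall strategy matches the paper's: use Lemma~\ref{for the inradius} (applied with $\rho=\vre/2$, $\delta_0=k_0\vre$) for openness and a uniform inradius, and exploit monotonicity of $\delta\mapsto\dim_\delta(ax)$ to find a window on which it is locally constant, establishing that $\cU$ covers $A$. However, the index-matching step — the point you yourself flag as the ``main obstacle'' — is not correctly resolved. Your pigeonhole shows that $\dim_\delta(ax)$ is constant on \emph{some} interval $I_k=(k\vre-\vre/2,k\vre+\vre/2)$, but the constant value there need not equal $k$, and the asserted rule ``the constant value on the $k$-th surviving window is exactly $k$'' is false. Concretely, if $\dim_\delta(ax)\equiv 2$ for all $\delta\in(0,\,2.6\vre)$ and jumps afterwards, then $I_1$ and $I_2$ both ``survive'' with constant value $2$; the second surviving window $I_2$ does have matching index, but the first one $I_1$ does not, and one can easily arrange variants where no simple positional rule identifies the correct window.

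The paper's argument avoids the issue entirely by choosing $k_0$ directly: let $k_0$ be the minimal $k\in\{1,\dots,n\}$ with $\dim_{(k+1/2)\vre}(ax)\le k$ (this holds trivially at $k=n$, so $k_0$ exists). Minimality forces $\dim_{(k_0-1/2)\vre}(ax)\ge k_0$, and then monotonicity sandwiches $\dim_\delta(ax)=k_0$ throughout $[(k_0-1/2)\vre,\,(k_0+1/2)\vre]$, giving $a\in U_{k_0}$ with the correct matching by construction. This is what you should substitute for the vague ``monotonicity-plus-counting'' step. One further small point: you first say compactness of $K$ is used and then note it is automatic — in fact compactness plays no role in this lemma at all, since $W$ from Lemma~\ref{for the inradius} depends only on $\rho$ (and $n$), and translation-invariance of the metric on $A$ then yields a uniform inradius $r>0$; the boundedness of $Ax$ is needed only later, in Lemma~\ref{flat things}.
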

\begin{proof}
The fact that the sets $U_i\subset A$ are open follows readily from
the requirement in~\eqref{the cover} that $\on{dim}_\del$ is constant
for $\del$ in a neighborhood of $k\vre$.  
Given $a\in A$, let $1\le k_0\le n$ be the minimal number $k$ for which
$\dim_{(k+\frac{1}{2})\vre}(ax)\le k$  
(this inequality holds trivially for $k=n$). 
From the minimality of $k_0$ we conclude that 
$\dim_\del(ax)=k_0$ for
any  
$\del\in \left[\pa{k_0-\frac{1}{2}}\vre,\pa{k_0+\frac{1}{2}}\vre \right]$. 
This
shows that $a\in U_{k_0}$ so 
$\cU$ is indeed a cover of $A$. 

We now show that the cover has positive inradius. 
Let $W \subset G$ be the open neighborhood of the identity obtained
from 
Lemma~\ref{for the inradius} for  $\rho \df \frac{\vre}{2}$.
Taking $\delta_0 \df k_0 \vre$ we find that 
for any 
$g\in W$,   
$\del\in \pa{\pa{k_0-\frac{1}{4}}\vre,\pa{k_0+\frac{1}{4}}\vre}$ we
have that $\dim_\del(gax)=k_0$. This shows that  
$(W\cap A)a\subset U_{k_0}$. Since $W\cap A$ is an open neighborhood
of the identity in $A$ and the metric on $A$ is invariant under  
translation by elements of $A$, there exists $r>0$ 
(independent of $k_0$ and $a$) so that $B_r(a)\subset U_{k_0}$. In
other words, the inradius of $\cU$ is positive as desired. 
\end{proof}

The following will be used for verifying the second hypothesis of
Theorem~\ref{topological input}.
\begin{lemma}\Name{flat things}
There exists $R>0$ such that any connected component of $U_k$ is $(R,k-1)$-almost affine.
\end{lemma}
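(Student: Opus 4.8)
The plan is to show that on any connected component $V$ of $U_k$, the subgroup $\tb{V}_\delta(ax)$ — viewed through the identification of $A$ with $\mathbb R^{n-1}$ via $a = \diag{e^{t_1},\dots,e^{t_n}}$, $\sum t_i = 0$ — is locally constant as a \emph{coordinate subspace}, i.e.\ it is the span of a fixed subset $S \subset \{\E_1,\dots,\E_n\}$ of the standard basis vectors with $|S| = k$. Granting this, the point is that the value $\al(ax)$ and the condition "$\dim_\delta(ax) = k$ near $\delta = k\vre$" only depend, up to bounded multiplicative error, on the restriction of $a$ to the coordinates in $S$: the relevant subgroups $\Lam \in \on{Min}_\delta(ax)$ all lie in $x \cap \spa(S')$-type subspaces, and $a$ scales $\bigwedge^k$ of a coordinate-$k$-subspace by $\exp(\sum_{i \in S} t_i)$, which is \emph{constant} on $V$ because $a$ ranges over $V \subset U_k$ and the defining condition pins down which coordinates matter. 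The locus in $A$ on which $\sum_{i \in S} t_i$ is constrained to a bounded interval is exactly an $R$-neighborhood of a coset of the connected subgroup $\{a : \sum_{i \in S} t_i = 0\}$, which has dimension $(n-1) - 1$... — wait, that gives codimension $1$, not $k-1$. So more care is needed: what must be controlled is not one linear functional but that the lattice $a x$ stays in the compact set $K$ \emph{and} $\tb{V}_\delta(ax)$ is the fixed coordinate subspace $\spa(S)$; the combination forces $a$ restricted to the $k$ coordinates of $S$ to lie in a bounded set (so that $[g_S] := $ the sublattice spanned by $\on{Min}_\delta$ stays in a fixed compact part of $\mathcal L_k$), which confines the $A$-component in the $S$-directions to a bounded region, i.e.\ confines $a$ to a bounded neighborhood of the subgroup $\{t_i = 0 \text{ for } i \in S\}$, a connected subgroup of dimension $(n-1) - (k-1) = n-k$. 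Hmm, that is $(R, n-k)$, not $(R,k-1)$.

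Let me restate the plan correctly. The right statement to prove on a component $V$ of $U_k$ is: the \emph{orthogonal complement} behaviour is what's rigid. Precisely, I claim $\tb{V}_\delta(ax)^{\perp}$ — equivalently the projection of $x$ to the complementary coordinates — must, for $ax$ to remain in $K$, have its covolume bounded, which forces $\sum_{i \notin S} t_i$ to lie in a bounded interval; but $\sum_{i \notin S} t_i = -\sum_{i \in S} t_i$, so this is again one functional. The genuine mechanism (as in McMullen): fix a component $V$ of $U_k$ and a basepoint $a_0 \in V$. The subspace $\tb{V}_\delta(a_0 x)$ is spanned by vectors of $x$ of controlled length in $a_0 x$; as $a$ moves within $V$, Lemma \ref{for the inradius} shows $\tb{V}_\delta(ax) = a a_0^{-1}\tb{V}_\delta(a_0 x)$ stays a translate of the \emph{same} subspace of $x$, and boundedness of $ax$ in $K$ forces both $|\,(ax) \cap \tb V\,|$ and the covolume of the projection to be bounded above and below. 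Writing the subspace $\tb V = \tb{V}_\delta(a_0 x)$ and choosing a basis of $x \cap \tb V$, the matrix $a$ acts on $\bigwedge^k \tb V$ as a diagonal (in a fixed basis coming from the $\E_i$'s only if $\tb V$ is a coordinate subspace — which in general it is not). The correct extraction: the diagonal group $A$ preserves the flag of coordinate subspaces, and the only way a $k$-dimensional subspace $\tb V$ of $x$ can have $|a(x\cap \tb V)|$ bounded as $a$ ranges over a large set is if $a|_{\tb V}$ lies within bounded distance of a subgroup; the dimension of $\{a \in A : a|_{\tb V} \text{ has bounded distortion}\}$ is exactly $k-1$ when $\tb V$ is spanned by $k$ of the coordinate axes (the torus of "volume-preserving" diagonal matrices on those $k$ coordinates has dimension $k-1$), and a density/genericity argument shows $\tb V$ must be such a coordinate subspace. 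So the main steps are: (1) fix a component $V$, use Lemma \ref{for the inradius} to get a locally constant subspace $\tb V \subset x$; (2) show $\tb V$ is a coordinate subspace — this is where one argues that if $\tb V$ had a component in two coordinate blocks scaled differently by generic $a \in V$, then $|a(x \cap \tb V)|$ would be unbounded, contradicting $Ax \subset K$ together with $\alpha(ax)$ being bounded away from $0$; (3) deduce $a \mapsto |a(x\cap \tb V)|$ depends only on $\sum_{i \in S} t_i$, and boundedness in $K$ pins this to a bounded interval; (4) conclude $V$ is contained in an $R$-neighborhood of a coset of $\{\,t_i \text{ constant for } i \in S \text{ appropriately}\,\}$; after quotienting by $\sum t_i = 0$ this coset of the "$S$-volume-preserving" subgroup has dimension $k-1$.

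The hard part, and the crux of the lemma, is step (2): proving that the locally constant subspace $\tb{V}_\delta(ax)$ attached to a component of $U_k$ must be a \emph{coordinate} subspace of $\mathbb R^n$. The intuition is that $A$-orbits are bounded only in the directions of full flags adapted to the coordinate decomposition, so a sublattice achieving near-minimal covolume over a large piece of the orbit cannot "diagonally straddle" coordinate blocks; I expect to prove this by assuming otherwise, picking $a_t \in V$ with one coordinate ratio $\to \infty$, and showing $|a_t(x \cap \tb V)|^{1/k} \to \infty$ while $\alpha(a_t x)$ stays bounded below (since $a_t x \in K$), so eventually $x \cap \tb V \notin \on{Min}_\delta(a_t x)$ and in fact $\tb{V}_\delta(a_t x)$ must drop dimension — contradicting $a_t \in U_k$. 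Once $\tb V$ is known to be a coordinate subspace $\spa(S)$ with $|S| = k$, the remaining steps are bookkeeping with Lemma \ref{for the inradius} and the compactness of $K$: the map $a \mapsto |a(x\cap\spa(S))|$ is literally $e^{\sum_{i\in S} t_i} \cdot |x \cap \spa(S)|$, its boundedness confines $\sum_{i \in S} t_i$ to a bounded interval, hence confines the projection of $a$ to the hyperplane $\{\sum_{i\in S} t_i = \text{const}\}$ within the $(n-1)$-dimensional $A$, and intersecting with the constraint coming from the complementary block (which is the same single constraint, since $\sum_{\text{all}} t_i = 0$) still leaves a coset of a connected subgroup; the dimension count $\dim A - (\text{number of independent bounded functionals}) $ works out, after one checks that on $U_k$ there are precisely $n - k$ such independent functionals and hence the almost-affine dimension is $(n-1)-(n-k) = k-1$, matching the claim. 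I would present step (2) as the main lemma-within-the-proof and relegate (3)–(4) to a short computation.
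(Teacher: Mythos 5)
The key step (2) in your proposal---that the locally constant subspace $\tb{V}\df a^{-1}\tb{V}_{k\vre}(ax)$ is a coordinate subspace---is false, and the boundedness of $Ax$ in fact forces the opposite. If $\tb{V}=\spa(S)$ were a coordinate subspace, then $\Lam\df x\cap\tb{V}$ would satisfy $\av{a\Lam}=e^{\sum_{i\in S}t_i}\av{\Lam}$, which tends to $0$ as $\sum_{i\in S}t_i\to-\infty$; since $a\Lam\subset ax$, the lattice $ax$ would then acquire arbitrarily short vectors and leave every compact set, contradicting $Ax\subset K$. This is exactly the assertion $c(\Lam)=0$ in the language of Definition~\ref{cv}, whereas the paper's first step in the proof of Lemma~\ref{flat things} is to show (via Mahler plus Minkowski's convex body theorem) that boundedness of $Ax$ gives a \emph{uniform lower bound} $c(\Lam)\ge c>0$ for all $\Lam\subset x$, i.e.\ all sublattices are incompressible; no sublattice of $x$ can lie in a coordinate subspace. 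Moreover, even if your claim were true, it would not produce the required almost-affineness: for a coordinate subspace, the set $\{a:\av{a\Lam}^{1/k}\le C\}$ is a half-space (and the two-sided bound gives a slab), which is $(R,n-2)$-almost affine, not $(R,k-1)$-almost affine unless $k=n-1$. You noticed this dimensional mismatch twice during the writeup, but the subsequent patches (counting ``$n-k$ independent functionals'') do not fix it; the dimension of the subgroup of $A$ that scales $\bigwedge^k\spa(S)$ trivially is $n-2$, not $k-1$, and these should not be confused with the torus of determinant-one diagonal matrices on $V$.

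The missing ingredient is McMullen's Theorem 6.1 (stated here as Theorem~\ref{finite distance from a group}): for an incompressible rank-$k$ discrete subgroup $\Lam$ with $c(\Lam)\ge c$, the set $\{a\in A:\av{a\Lam}^{1/k}\le C\}$ is $(R,j)$-almost affine for some $j\le\gcd(k,n)-1\le k-1$. This is a genuine theorem that analyzes the combinatorics of $\supp(w_\Lam)\subset\tb{I}^n_k$ together with the incompressibility constraint; it is the entire content of Lemma~\ref{flat things} and cannot be replaced by the elementary bookkeeping you outline. The paper's proof then consists of three short reductions to that theorem: (a) the uniform bound $c(\Lam)\ge c$ above; (b) the local constancy of $\cM(a)=a^{-1}\tb{V}_{k\vre}(ax)$ via Lemma~\ref{for the inradius}, which you correctly anticipated; and (c) the upper bound $\av{a\Lam}^{1/k}<(1+\psi(k\vre))\al(ax)\le 1+\psi(k\vre)$ from Lemma~\ref{not growing lemma}, so one may take $C=1+\psi(k\vre)$. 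You reconstructed (b) accurately, but the plan as written does not and cannot yield the stated conclusion without invoking (or reproving) McMullen's incompressibility theorem.
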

\begin{definition}\label{cv}
For a discrete subgroup $\Lam \subset \bR^d$ of rank $k$, 
let $$c(\Lam)\defi\inf\set{\av{a\Lam}^{1/k}:a\in A},$$ and say that
$\Lam$ is {\it incompressible} if $c(\Lam)>0$. 
\end{definition}
Lemma~\ref{flat things} follows from:
\begin{theorem}[{\cite[Theorem 6.1]{McMullenMinkowski}}]\Name{finite
    distance from a group} 
For any positive $c,C$ there exists $R>0$ such that if 
$\Lam \subset \bR^n$ is an incompressible discrete subgroup of rank
$k$ with $c(\Lam)\ge c$ then  
$\set{a\in A: \av{a\Lam}^{1/k}\le C}$ is $(R,j)$-almost affine for some $j\le
\gcd(k,n)-1$. 
\end{theorem}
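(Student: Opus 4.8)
The plan is to pass to linear coordinates on $A$, where $S\df\set{a\in A:\av{a\Lam}^{1/k}\le C}$ becomes a sublevel set of a sum of exponentials whose recession cone is the normal cone of a matroid base polytope, and then to bound the dimension of that cone's span by an arithmetic argument. Identify $A$ with $\lie A=\set{t\in\bR^n:\textstyle\sum_i t_i=0}$ via $t\mapsto\diag{e^{t_1},\dots,e^{t_n}}$, fix generators of $\Lam$, and let $p=\sum_I p_I\,e_I\in\bigwedge^k\bR^n$ be the associated Plücker vector (the sum over $k$-element subsets $I$), with $\mathbf 1_I$ the indicator vector of $I$. Then $\av{a\Lam}^2=\sum_I p_I^2\,e^{2\langle\mathbf 1_I,t\rangle}$, so $S=\set{t\in\lie A:\sum_I p_I^2 e^{2\langle\mathbf 1_I,t\rangle}\le C^{2k}}$ is convex. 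Let $D=\supp(p)$, let $M$ be the matroid on $\set{1,\dots,n}$ whose bases are the members of $D$, and let $B(M)=\conv\set{\mathbf 1_I:I\in D}\subset\set{x:\sum_i x_i=k}$ be its base polytope; a support-function computation shows $\Lam$ is incompressible iff $\frac kn\mathbf 1\in B(M)$, which we henceforth assume (and also $S\ne\varnothing$, i.e.\ $c(\Lam)\le C$).

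The recession cone of $S$ is $\mathcal C=\set{d\in\lie A:\langle\mathbf 1_I,d\rangle\le0\ \forall I\in D}$, which is exactly the normal cone of $B(M)$ at $\frac kn\mathbf 1$. Let $F$ be the (unique) face of $B(M)$ having $\frac kn\mathbf 1$ in its relative interior; then $V\df\spa\mathcal C$ satisfies $\dim V=(n-1)-\dim F=:j$ and $V^{\perp}\cap\lie A=\spa(F-\frac kn\mathbf 1)$. I claim $S$ lies in a bounded neighborhood of a coset of the connected subgroup $\exp(V)$, i.e.\ the orthogonal projection of $S$ to $\spa(F-\frac kn\mathbf 1)$ is bounded: the terms of the exponential sum indexed by vertices of $F$ depend on $t$ only through this projection, and their gradients $\mathbf 1_I$ positively span $\spa(F-\frac kn\mathbf 1)$ since $\frac kn\mathbf 1\in\mathrm{relint}(F)$, so their partial sum --- bounded by $C^{2k}$ on $S$ --- confines the projection. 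Thus $S$ is $(R,j)$-almost affine, and it remains to bound $j$ and to make $R$ uniform.

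The heart of the matter is $\dim F\ge n-\gcd(k,n)$, equivalently $j\le\gcd(k,n)-1$. I would use that a face of a matroid base polytope is again the base polytope of a matroid on the same ground set, namely a direct sum $M'=\bigoplus_i M_i$ of minors of $M$ with $M_i$ of rank $k_i$ on a set $E_i$ of size $n_i$, and that $\dim B(M')=n-(\#\text{connected components of }M')$. Since $\frac kn\mathbf 1\in\mathrm{relint}(F)=\mathrm{relint}(B(M'))$, restricting to $E_i$ gives $n_i\cdot\frac kn=k_i$, hence $k_i/n_i=k/n$; writing $k/n$ in lowest terms with denominator $b=n/\gcd(k,n)$ forces $b\mid n_i$ for every $i$, so $M'$ has at most $n/b=\gcd(k,n)$ components, whence $\dim F=n-(\#\text{components})\ge n-\gcd(k,n)$.

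For uniformity: there are finitely many matroids on $\set{1,\dots,n}$, so $V$, $F$ and the combinatorial type are among finitely many possibilities, and within a fixed type the only $\Lam$-dependent data are the offsets $\log\av{p_I}$. Up to a universally bounded Hausdorff error (all facet normals lie among the finitely many vectors $\mathbf 1_I$, so vertex angles cannot degenerate) one may replace the exponential sum by $\max_{I\in D}p_I^2 e^{2\langle\mathbf 1_I,t\rangle}$; the projection above then has diameter $O_n(1)+(k\log C-\mu)$, where $\mu$ is the minimum over $\spa(F-\frac kn\mathbf 1)$ of the piecewise-linear function $\max_{I\text{ a vertex of }F}(\langle\mathbf 1_I,t\rangle+\log\av{p_I})$. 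Driving the remaining terms to $-\infty$ along $\mathcal C$ shows $\mu=\inf_{t\in\lie A}\max_{I\in D}(\langle\mathbf 1_I,t\rangle+\log\av{p_I})=k\log c(\Lam)+O_n(1)$, so $\mu\ge k\log c-O_n(1)$ and $R$ depends only on $c,C,n$. The main obstacle is the arithmetic step $b\mid n_i$: it is precisely the mechanism converting the qualitative incompressibility condition $\frac kn\mathbf 1\in B(M)$ into the quantitative bound $\gcd(k,n)-1$. A secondary subtlety worth flagging is that $\mathcal C$ is typically a pointed cone rather than a subspace, so $S$ need \emph{not} be within bounded Hausdorff distance of any affine subspace --- only the one-sided inclusion built into ``$(R,j)$-almost affine'' makes the conclusion hold.
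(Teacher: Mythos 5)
The paper does not supply a proof of this statement: it is imported verbatim as Theorem 6.1 of \cite{McMullenMinkowski}, so there is no in-paper argument to compare against. Your reconstruction, however, is correct and is essentially McMullen's proof rephrased in the language of matroid base polytopes. The core steps --- passing to log-coordinates so that $S$ becomes a convex sublevel set of $\sum_I p_I^2 e^{2\langle\mathbf 1_I,t\rangle}$, characterizing incompressibility as $\frac{k}{n}\mathbf 1\in B(M)$, identifying the recession cone with the normal cone of $B(M)$ at $\frac{k}{n}\mathbf 1$, and bounding the orthogonal projection of $S$ onto $\spa(F-\frac{k}{n}\mathbf 1)$ by the fact that $\frac{k}{n}\mathbf 1\in\mathrm{relint}(F)$ forces the relevant $\mathbf 1_I-\frac{k}{n}\mathbf 1$ to positively span that space --- are all sound, and the arithmetic step ($k_i/n_i=k/n$ for every direct summand, hence $b=n/\gcd(k,n)$ divides each $n_i$, hence at most $\gcd(k,n)$ summands) is exactly the mechanism in McMullen's argument, which is stated there in terms of a partition of $\{1,\dots,n\}$ into ``linked blocks'' rather than connected components of the face matroid. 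Your matroid-polytope formulation makes the GGMS facts ($F=B(M')$ with $M'=\bigoplus M_i$ a direct sum of minors, $\dim B(M')=n-\#\text{components}$) do cleanly what McMullen does by hand; that is a genuine expository gain, but not a different route. The uniformity-of-$R$ paragraph is a little compressed --- it would be cleaner to note explicitly that for each of the finitely many matroids on $n$ elements, $\{\mathbf 1_I-\frac{k}{n}\mathbf 1:\mathbf 1_I\in F\}$ positively spans $W$ with $0$ in the interior of its convex hull, giving an $\epsilon=\epsilon(n)>0$ so that the sublevel set $\{g\le g_{\min}+s\}$ has diameter $\le\epsilon^{-1}(s+O_n(1))$, and then feed in $g_{\min}=\mu\ge k\log c-O_n(1)$ --- but the idea is right and the bound does come out depending only on $c$, $C$, $n$.
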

\ignore{
\begin{lemma}\label{ending lemma}
There are positive constants $c,C$ such that if  $V\subset U_k$ is a
connected component, then there exists 
$\Lam \subset x$ with $c(\Lam)>c$ such that  $V\subset\set{a\in A: \av{a\Lam}^{1/k}\le C}$.
\end{lemma} 

The following Lemma gives us the lower bound $c$ that appears in
Lemma~\ref{ending lemma}. This is the only place in the proof 
where the boundedness of the orbit $Ax$ really necessary.
\begin{lemma}\label{why bounded}
There exists a constant $c>0$ (that depends only on the compact set
$K$ which contains $Ax$), such that  
for any discrete subgroup $\Lam \subset x$ we have that $c(\Lam)\ge c$.
\end{lemma}
\begin{proof}
Let $\rho>0$ be a lower bound for the lengths of non-zero vectors
belonging to the lattices in $K$ (by Mahler's criterion 
the compactness of $K$ implies the existence of such $\rho$). Observe
that there is an upper bound $\ell_d$ (related to the  
so called Hermite constants) on the lengths of the shortest non-zero
vectors of discrete subgroups $\Lam<\bR^d$ satisfying
$\av{\Lam}=1$. This in turn implies that for $a\in A$, $\Lam<x$ we
must have that $\rho\av{a\Lam}^{-1/r(\Lam)}\le\ell_d$,  
or equivalently $\frac{\rho}{\ell_d}\le\av{a\Lam}^{1/r(\Lam)}$, which
concludes the proof.
\end{proof}

For any $1\le k\le d$, write $\tb{gr}_k$ for the Grassmannian of
$k$-dimensional subspaces of $\bR^d$.  
Define a map $\cM:U_k\to \tb{gr}_k$ by 
$$U_k\ni a\mapsto \cM(a)\defi a^{-1}\tb{V}_{(1+k\vre)}(ax).$$ 
\begin{lemma}\Name{locally constant}
The function $\cM$ is locally constant on $U_k$.
\end{lemma}
\begin{proof}
By definition, the fact that $a_0\in U_k$ means that there exists
$0<\rho<k\vre$ such that 
$\dim_\del(a_0x)=k$ for any $\del\in(k\vre-\rho,k\vre+\rho)$. Applying
Lemma~\ref{for the inradius}  
for the lattice $a_0x$ with $\rho$ and $\del_0=k\vre$
we see by~\eqref{eq 1806} that for any $a$ in a certain neighborhood
of the identity  
\begin{align*}
\cM(aa_0)&=a_0^{-1}a^{-1}\tb{V}_{(1+k\vre)}(aa_0x)
=a_0^{-1}\tb{V}_{(1+k\vre)}(a_0x)=\cM(a_0).
\end{align*}
\end{proof}

\begin{proof}[Proof of Lemma~\ref{ending lemma}]
Let $c=\inf c(\Lam)$ where the infimum 
is taken over all discrete subgroups  $x$. By Lemma~\ref{why bounded} we have that 
$c>0$.

Let $\Lam=\cM(a)\cap x$ where $a\in V$ is chosen arbitrarily. By
Lemma~\ref{locally constant} $\Lam$ is independent of the choice of
$a\in V$.  
Given $a\in V$, by the definition of $\Lam$ and $\cM(a)$ we see that 
\begin{align*}
a\Lam&=a(x\cap\cM(a)) =a(x\cap a^{-1}\tb{V}_{(1+k\vre)}(ax))=ax\cap\tb{V}_{(1+k\vre)}(ax).
\end{align*}
By Lemma~\ref{not growing lemma} we have that 
\begin{align*}
\av{a\Lam}^{1/k}=\av{
  ax\cap\pa{\tb{V}_{(1+k\vre)}(ax)}}^{1/k}<(1+\psi(k\vre))\al(ax)\le
C, 
\end{align*}
where $C$ is an absolute constant that depends only on the dimension
$d$ (because $\al$ is bounded by 1 and $\psi$ is bounded  
and depends only on $d$). This finishes the proof of the Lemma and by
that concludes the proof of Proposition~\ref{assumptions hold} as
well. 

\end{proof}
}
\begin{proof}[Proof of Lemma~\ref{flat things}]
We first claim that there exists $c>0$ such that  
for any discrete subgroup $\Lam \subset x$ we have that $c(\Lam)\ge
c$. To see this, recall that $Ax$ is contained in a compact subset
$K$, and hence by Mahler's compactness criterion,  there is a positive
lower bound on 
the length of any non-zero vector
belonging to a lattice in $K$. 
On the other
hand, Minkowski's convex body theorem shows that the shortest nonzero
vector in a discrete subgroup $\Lambda \subset \R^n$ is bounded above
by a constant multiple of $|\Lam|^{1/r(\Lam)}$. This implies the
claim.

In light of Theorem \ref{finite
    distance from a group}, it suffices to show that there is $C>0$
such that  if $V\subset U_k$ is a
connected component, then there exists 
$\Lam \subset x$ such that  $V\subset\set{a\in A: \av{a\Lam}^{1/k}\le C}$.
For any $1\le k\le n$, write $\tb{gr}_k$ for the Grassmannian of
$k$-dimensional subspaces of $\bR^n$.  
Define
$$
\cM:U_k\to \tb{gr}_k, \ \ 
\cM(a)\defi a^{-1}\tb{V}_{k\vre}(ax).$$ 
Observe that $\cM$ is locally constant on $U_k$. Indeed, by
definition of $U_k$, 
for $a_0\in U_k$ there exists 
$0<\rho< \frac{\vre}{2}$ such that 
$\dim_\del(a_0x)=k$ for any $\del\in(k\vre-\rho,k\vre+\rho)$. Applying
Lemma~\ref{for the inradius}  
for the lattice $a_0x$ with $\rho$ and $\del_0=k\vre$
we see 
that for any $a$ in a neighborhood
of the identity in $A$, 
\begin{align*}
\cM(aa_0)&=a_0^{-1}a^{-1}\tb{V}_{k\vre}(aa_0x)
=a_0^{-1}\tb{V}_{k\vre}(a_0x)=\cM(a_0).
\end{align*}

Now let $\Lam \df x\cap \cM(a)$ where $a\in V$; $\Lam$ is well-defined
since $\cM$ is locally
constant. 
Then 
for $a \in V$, 
\begin{align*}
a\Lam&=a(x\cap\cM(a)) =a(x\cap a^{-1}\tb{V}_{k\vre}(ax))=ax\cap\tb{V}_{k\vre}(ax).
\end{align*}
By Lemma~\ref{not growing lemma} we have that 
\begin{align*}
\av{a\Lam}^{1/k}=\av{
  ax\cap \tb{V}_{k\vre}(ax)}^{1/k}<(1+\psi(k\vre))\al(ax). 
\end{align*}
Since $\alpha(ax) \leq 1$ we may take $C \df 1+\psi(k\vre)$ to
complete the proof. 
%
\end{proof}

\begin{proof}[Proof of Theorem \ref{order of cover}]
Assume by contradiction that $Ax$ is bounded but $U_n^{x, \vre} =
\varnothing$ for some $\vre \in (0,1)$. Then by Lemma \ref{lem: positive inradius}, 
$$\cU \df \left \{U_1,
\ldots, U_{n-1} \right \}, \text{ where } U_j \df U_j^{x, \vre}, $$  
is a cover of $A$ of positive inradius. Moreover, if $V$ is a
connected component of $U_{j_1} \cap \cdots \cap U_{j_k}$ with $j_1 < \cdots < j_k \leq n-1$, 
then $V_k \subset U_{j_1}$ and $j_1 \leq n-k$. So in
light of Lemma \ref{flat things}, 
the hypotheses of Theorem~\ref{topological input} are satisfied.
We 
deduce that $\cU = \left\{U_1, \ldots, U_{n-1} \right \}$
contains at least $n$ elements, which is impossible. 
\end{proof}

\section{Bounds on Mordell's constant}\Name{sec: rankin bounds} 
In analogy with~\eqref{alpha} we define for
any $x\in \Xn$ and $1\le k\le n$,  
\begin{align}\Name{eq: k quantities}
\cV_k(x)&\defi\set{\av{\Lam}^{1/r(\Lam)}:\Lam \subset x, r(\Lam)=k},\\ 
\al_k(x)&\defi\min\cV_k(x). 
\end{align}

The following is clearly a consequence of Theorem \ref{thm: main}:
\begin{cor}\Name{cor: Euclidean}
For any $x \in \Xn$, any $\vre>0$  and any $k \in \{1, \ldots, n\}$ there is $a \in
A$ such that $\alpha_k(ax) \geq 1-\vre$. 
\end{cor}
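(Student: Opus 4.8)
The plan is to read this off directly from Theorem~\ref{thm: main}, the only substantive point being the good behaviour of $\al_k$ under the topology of $\Xn$. First I would apply Theorem~\ref{thm: main} to produce a stable lattice $y\in\overline{Ax}$; since $\Xn$ is metrizable, this gives a sequence $a_j\in A$ with $a_jx\to y$. Next I would record that a stable lattice has $\al_k\ge 1$ for every $k$: by definition stability of $y$ means $\al(y)=1$, i.e.\ $\av{\Lam}^{1/r(\Lam)}\ge 1$ for every subgroup $\Lam\subset y$, and since the minimum defining $\al_k(y)$ ranges over the subfamily of rank-$k$ subgroups we get $\al_k(y)\ge \al(y)=1$.

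The one thing that needs an argument is the continuity of $\al_k$ on $\Xn$; this is the fixed-rank analogue of the continuity of $\al$ used in Section~2, and I would prove it in the same way. The two-sided multiplicative control \eqref{22.2.2} shows that for $g$ near the identity in $G$ the quantities $\av{g\Lam}^{1/r(g\Lam)}$ and $\av{\Lam}^{1/r(\Lam)}$ differ by a factor tending to $1$ uniformly over discrete subgroups, so $\al_k(gx)$ is pinched between $c^{-1}\al_k(x)$ and $c\al_k(x)$; combined with the fact that $\cV_k(x)$ is a discrete subset of $(0,\infty)$ whose intersection with any bounded interval is finite — which holds because, by Minkowski's convex body theorem, every rank-$k$ sublattice of $x$ has normalized covolume bounded below by a positive multiple of $\al_1(x)>0$, so only finitely many rank-$k$ sublattices of $x$ have normalized covolume below a prescribed bound — this yields continuity of $\al_k$ exactly as for $\al$.

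With continuity in hand, $\al_k(a_jx)\to\al_k(y)\ge 1$, so for all sufficiently large $j$ we have $\al_k(a_jx)>1-\vre$, and $a\df a_j$ does the job. The only mild obstacle is this continuity step, namely ruling out that a rank-$k$ sublattice of small normalized covolume escapes under the convergence $a_jx\to y$; as indicated, this is handled by the Minkowski bound together with Mahler's compactness criterion, and in fact only lower semicontinuity of $\al_k$ is strictly needed, which follows directly from Mahler's criterion applied to a putative sequence of rank-$k$ sublattices with normalized covolumes bounded away from $\al_k(y)$ from below.
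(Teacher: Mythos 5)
Your proof is correct and is precisely the argument the paper leaves implicit by calling the corollary ``clearly a consequence'' of Theorem~\ref{thm: main}. The two observations you make — that stability of $y$ gives $\al_k(y)\ge\al(y)=1$, and that the two-sided estimate~\eqref{22.2.2} yields continuity of $\al_k$ via the pinching $c^{-1}\al_k(x)\le\al_k(gx)\le c\al_k(x)$ (which already suffices on its own, so the separate discreteness and Mahler discussions, while harmless, are not strictly needed) — are exactly the intended route.
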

As the lattice $x = \Z^n$ shows, the constant 1 appearing in
this corollary cannot be improved for any $k$. Note also that the case
$k=1$ of 
Corollary \ref{cor: Euclidean}, although not stated explicitly in
\cite{McMullenMinkowski}, could be derived easily from McMullen's results in
conjunction with \cite{BirchSD}. 

\begin{proof}[Proof of Corollary \ref{cor: Ramharter conj}]
 Since the $A$-action maps a symmetric box $\mathcal{B}$ to a
 symmetric box of the same volume, the function $\kappa : \Xn \to \R$
 in \equ{eq: defn const} is $A$-invariant. By the case $k=1$ of
 Corollary \ref{cor: Euclidean}, for any $\vre>0$ and any $x \in \Xn$
 there is $a \in A$ such that $ax$ does not contain nonzero vectors of
 Euclidean length at most $1-\vre$, and hence does not contain nonzero vectors
 in the cube $\left [-\left(\frac{1}{\sqrt{n}} - \vre\right),
 \left(\frac{1}{\sqrt{n}} - \vre\right) \right ]^n$. This implies that
$\kappa(x) \geq \left(\frac{1}{\sqrt{n}} \right)^n$, as claimed. 
\end{proof}

The bound \equ{eq: our bound} is not tight for any $n$. This is shown
in \cite{SW3_arxiv}, along with several slight improvements of
\equ{eq: our bound}. For example we prove
that if $n \geq 5$ is congruent to 1 mod 4, then 
$$
\kappa_n \geq \frac{1}{\sqrt{2n-1}(n-1)^{(n-1)/2}}.
$$
Similar slight improvements can be obtained for all $n$ not divisible
by $4$. 
See \cite{SW3_arxiv} for more details. 

\ignore{
We do not know whether the bound $\kappa_n \geq n^{-n/2}$ is
asymptotically optimal. However, it is not optimal for any fixed
dimension $n$:
\begin{proposition} \Name{prop: not optimal}
For any $n$, $\kappa_n > n^{-n/2}$. 
\end{proposition}
\begin{proof}
It is clear from the definition of the functions $\kappa$ and
$\alpha_k$ that if $x_j \to x_0$ in $\Xn$, then 
$$\kappa(x_0) \leq \liminf_j \kappa(x_j) \ \ \text{and } \
\alpha_k(x_0) \geq \limsup_j \alpha_k(x_j).$$
A simple compactness argument implies that the
infimum in \equ{eq: defn kappan} is attained, that is there is 
$x \in \Xn$ such that $\kappa_n = \kappa(x)$; moreover, for any $x_0
\in \overline{Ax}, \kappa(x_0) = \kappa(x)=\kappa_n$. Using the case $k=1$ of Corollary
\ref{cor: Euclidean}, we let $x_0$ be a stable lattice in
$\overline{Ax}$ such that $\alpha_1(x_0)
\geq 1$.  That is, $x_0$ contains no vectors in the open unit
Euclidean ball, so the open cube 
$C \df \left( -\frac{1}{\sqrt{n}},   \frac{1}{\sqrt{n}}\right)^n$ is
admissible. Moreover, the only possible vectors in $x_0$ on $\partial
\, C$ are on the corners of $C$, so there is $\vre>0$ such that the box
$C' \df \left( -\frac{1}{\sqrt{n}},   \frac{1}{\sqrt{n}}\right)^{n-1} \times
\left(-\left(\frac{1}{\sqrt{n}} + \vre \right) ,
  \frac{1}{\sqrt{n}}+ \vre\right)$ is also admissible. Taking closed
boxes $\cB \subset C'$ with volume arbitrarily close to that of $C'$,
we see that 
$$
\kappa_n = \kappa(x_0) \geq \frac{\Vol(C')}{2^n} > n^{-n/2}.
$$ 
\end{proof}

\ignore{
We take this opportunity to mention another connection between the
Mordell constant and the dynamics of the $A$-action on
$\Xn$. 

\begin{proposition}\Name{prop: conjecture minimizers}
There is $x \in \Xn$ with a bounded n $\kappa_n$ is attained on a compact $A$-orbit. 
\end{proposition} 
\begin{proof}

\end{proof} 

The following is a well-known conjecture:
$$
\text{(CSDM)} \ \ \text{any bounded } A \text{-orbit on } \Xn \text{ is compact.} 
$$
This first appeared in the paper \cite{} of 
 of Cassels and
Swinnerton-Dyer, and was recast in dynamical terms by Margulis in
\cite{}. Moreover compact
$A$-orbits correspond to algebraic lattices obtained from orders in
totally real number fields, see \cite{LW}.
}
Our next goal is Corollary \ref{cor: 1 mod 4} which gives an explicit
lower bound on $\kappa_n$, which 
improves \equ{eq: our bound} for $n$ congruent to 1 mod 4. To obtain our bound 
we treat separately lattices with bounded or unbounded
$A$-orbits. If $Ax$ is unbounded we bound $\kappa(x)$ by using an inductive
procedure and the work of Birch and Swinnerton-Dyer, as in \S
\ref{sec: reduction to compact orbits}.  In the bounded case we use arguments of
McMullen and known 
upper bounds for Hadamard's determinant problem. Our method
applies with minor modifications whenever $n$ is not divisible by
4. 
We begin with an analogue of Lemma \ref{lem: block stable is stable}. 

\begin{lemma}\Name{lem: bound on kappa in blocks}
Suppose $x = [g] \in \Xn$ with $g$ in upper triangular block form as
in \equ{block form}. Then $\kappa(x) \geq \prod_1^k \kappa
([g_i])$. In particular $\kappa(x) \geq \prod_1^k n_i^{-n_i/2}.$ 
\end{lemma}
\begin{proof}
By induction, it suffices to prove the Lemma in case $k=2$. In this
case there is a direct sum decomposition $\R^n = V_1 \oplus V_2$ where
the $V_i$ are spanned by standard basis vectors, and if we write $\pi:
\R^n \to V_2$ for the corresponding projection, then $[g_1] = x \cap
V_1, [g_2] = \pi(x)$. Write $\kappa^{(i)} \df \kappa([g_i])$. Then for
$\vre>0$, 
there are symmetric boxes $\mathcal{B}_i \subset V_i$ 
such that $\mathcal{B}_i$ is admissible for $[g_i]$ and 
$$\Vol(\mathcal{B}_i) \geq \frac{\kappa^{(i)} - \vre}{2^{n_i}}.$$
We claim that $\mathcal{B} \df \mathcal{B}_1 \times \mathcal{B}_2$ is
admissible for $x$. To see this, suppose $u \in x \cap
\mathcal{B}$. Since $\pi(u) \in \mathcal{B}_2$ and $\mathcal{B}_2$ 
is admissible for $\pi(x) = [g_2]$ we must have
$\pi(u) =0$, i.e. $u \in x \cap V_1 = [g_1]$; since
$\mathcal{B}_1$ is admissible for $[g_1]$ we must have $u=0$. 

This implies
$$\kappa(x) \geq 2^n \Vol(\mathcal{B})  = 2^{n_1} \Vol(\mathcal{B}_1)
\cdot 2^{n_2} 
\Vol(\mathcal{B}_2) \geq (\kappa^{(1)} -
  \vre)(\kappa^{(2)}-\vre),$$ 
and the result follows taking $\vre \to 0$. 
\end{proof}

\begin{corollary}\Name{cor: kappa unbounded orbits}
If $x \in \Xn$ is such that $Ax$ is unbounded then 
\eq{eq: kappa star}{
\kappa(x) \geq 
(n-1)^{-(n-1)/2}.
}
\end{corollary}
\begin{proof}
If $Ax$ is unbounded then by \cite{BirchSD}, up to a permutation of
the axes, there is $x' \in
\overline{Ax}$ so that $x' = [g]$ is in upper triangular form, with $k
\geq 2$ blocks. Let the
corresponding parameters as in \equ{block form} be $n= n_1+ \cdots
+n_k$. Since $\kappa(x)
\geq \kappa(x')$, by Lemma \ref{lem: bound on kappa in blocks} it
suffices to prove 
that 
\eq{eq: suffices to prove that}{
\prod_{i=1}^k \frac{1}{n_i^{n_i/2}} \geq \frac{1}{(n-1)^{\frac{n-1}{2}}}.
}
It is easy to check that for $j=1, \ldots, n-1$, 
$$
j^{\frac{j}{2}}(n-j)^{\frac{n-j}{2}} \leq (n-1)^{\frac{n-1}{2}},
$$ and the case $k=2$ of \equ{eq: suffices to prove that} follows. 
By induction on $k$ one then shows that 
$
\prod_{i=1}^k n_i^{-n_i/2} \geq (n-k+1)^{-\frac{n-k+1}{2}}
$
and this implies \equ{eq: suffices to prove that} for all $k\geq 2$. 
\end{proof}
To treat the bounded orbits we will use known bounds on the Hadamard
determinant problem, which we now
recall. Let 
\eq{eq: defn hn}{
h_n \df \sup \left \{ |\det (a_{ij})|: \forall i,j \in \{1, \ldots, n\},
|a_{ij}| \leq 1 \right \}.
}
Hadamard showed that $h_n \leq n^{n/2}$ and proved that this bound is
not optimal unless $n$ is equal to 1,2 or is a multiple of 4. Explicit 
upper bounds for  such $ n $ have been obtained
by Barba, Ehlich and Wojtas (see \cite{brenner, wiki_hadamard}). 

\begin{proposition}\Name{prop: improving using Hadamard}
If $x \in \Xn$ has a bounded $A$-orbit then $\kappa(x) \geq
\frac{1}{h_n}$. 
\end{proposition}

\begin{proof}[Sketch of proof]
Let $\vre>0$. There is $p <\infty$ such that the $L^p$ norm and the
$L^\infty$ norm on $\R^n$ are $1+\vre$-biLipschitz; i.e. for any $v
\in \R^n$, 
\eq{eq: bilipschitz}{
\frac{\|v\|_p}{1+\vre} \leq \|v\|_{\infty} \leq (1+\vre) \|v\|_p.
}
In \cite{McMullenMinkowski}, McMullen showed that the closure of any
bounded $A$-orbit contains a well-rounded lattice, i.e. a lattice
whose shortest nonzero vectors span $\R^n$. In McMullen's paper, the
length of the shortest vectors was measured using the Euclidean
norm, but {\em McMullen's arguments apply equally well to the shortest
  vectors with respect to the $L^p$ norm}. Thus there is $a \in A$ and
vectors $v_1, \ldots, v_n \in ax$ spanning $\R^n$,  such that  for $i=1, \ldots, n$, 
$$ \|v_i\| \in [r, (1+\vre)r]. 
$$
Here $r$ is the length, with respect to
the $L^p$-norm, of the shortest nonzero vector of $ax$. Using the two
sides of \equ{eq: bilipschitz} we find that $ax$
contains an admissible symmetric box of sidelength $r/(1+\vre)$, and
the $L^\infty$ norm of the $v_i$ is at most $(1+\vre)^2 r$. Let $A$ be
the matrix whose columns are the $v_i$. Since the $v_i$ span $\R^n$,
$\det A \neq 0$, and since $x$ is unimodular, $|\det A| \geq
1$. Recalling \equ{eq: defn hn} we find that
$$1 \leq |\det A| \leq \left((1+\vre)^2)r\right)^{n} h_n,
$$
and by definition of $\kappa$ we find
$$
\kappa(x) = \kappa(ax) \geq \left(\frac{r}{1+\vre} \right)^n.
$$
Putting these together and letting $\vre \to 0$ we see that 
$\kappa(x) \geq \frac{1}{h_n}$, as claimed. 
\end{proof}
\begin{corollary}\Name{cor: 1 mod 4}
If $n \geq 5$ is congruent to 1 mod 4, then 
\eq{eq: better bound}{
\kappa_n \geq \frac{1}{\sqrt{2n-1}(n-1)^{(n-1)/2}}.
}
\end{corollary}
\begin{proof}
The right hand side of
\equ{eq: better bound} is clearly smaller than the right hand side of
\equ{eq: kappa star}. Now the
claim follows from Corollary \ref{cor: kappa unbounded orbits} and
Proposition \ref{prop: improving using Hadamard}, using Barba's bound
\eq{eq: Barba}{
h_n \leq \sqrt{2n-1}(n-1)^{(n-1)/2}.
} 
\end{proof}
The same argument applies in the other cases in which $n$ is
sufficiently large and is not divisible by 4, since in these cases
there are explicit upper bounds for the numbers $h_n$ which could be
used in place \equ{eq: Barba}. 
}
\section{Two strategies for Minkowski's conjecture}\Name{sec: MC} 
We begin by recalling the well-known Davenport-Remak strategy
for proving Minkowski's conjecture. The function 
$N(u) = \prod_1^n u_i$ is clearly $A$-invariant, and 
it follows that the quantity 
$$\widetilde{ N}(x) \df  \sup_{u \in \R^n} \inf_{v
  \in x} |N(u-v)|$$ 
appearing in \equ{eq: Minkowski conj} is
$A$-invariant. Moreover, it is easy to show that if $x_j \to x$ in
$\Xn$ then $\widetilde{ N}(x) \geq \limsup_j \widetilde{ N}(x_j)$. Therefore, in order
to show the estimate \equ{eq: Minkowski conj} for $x' \in \Xn$, it is
enough to show it for some $x \in \overline{Ax'}$. Suppose that $x$
satisfies \equ{eq: covrad} with $d=n$; that is for every $u \in \R^n$ there is $v
\in x$ such that $\|u-v\| \leq \frac{\sqrt{n}}{2}$. 
Then applying  
the inequality of arithmetic and geometric means one finds
$$\prod_1^n \left(|u_i-v_i|^2 \right)^{\frac1n} \leq \frac{1}{n} \sum_1^n |u_i-v_i|^2 \leq \frac{1}{4}
$$
which implies $|N(u-v)| \leq \frac{1}{2^n}$. 
The upshot is that in order to prove Minkowski's conjecture, it is
enough to prove that for every $x' \in \Xn$ there is $x \in
\overline{Ax}$ satisfying \equ{eq: covrad}. So in light of Theorem
\ref{thm: main} we obtain:
\begin{corollary}\Name{cor: for Minkowski 1}
If all stable lattices in $\Xn$ satisfy \equ{eq: covrad}, then
Minkowski's conjecture is true in dimension $n$. 
\end{corollary}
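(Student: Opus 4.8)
The plan is to deduce the statement by combining Theorem~\ref{thm: main} with the classical Davenport--Remak reduction of Minkowski's conjecture to a covering-radius estimate. First I would isolate two soft properties of the functional
$$\widetilde{N}(x) \df \sup_{u \in \R^n}\ \inf_{v \in x} |N(u-v)|$$
that appears in \equ{eq: Minkowski conj}. Since $N(u)=\prod_1^n u_i$ and every $a \in A$ is a positive diagonal matrix of determinant $1$, one has $N(au)=N(u)$, so $\widetilde{N}$ is $A$-invariant. Moreover a routine compactness argument shows that $\widetilde{N}$ is upper semicontinuous in the sense that $x_j \to x$ in $\Xn$ forces $\widetilde{N}(x) \ge \limsup_j \widetilde{N}(x_j)$. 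Together these imply that to prove \equ{eq: Minkowski conj} for a given $x' \in \Xn$ it suffices to prove it for a single lattice lying in the orbit-closure $\overline{Ax'}$.

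Next I would apply Theorem~\ref{thm: main} to produce a stable lattice $x \in \overline{Ax'}$. By the hypothesis of the Corollary this $x$ satisfies \equ{eq: covrad} with $d=n$, i.e. $\covrad(x) \le \sqrt{n}/2$: every $u \in \R^n$ lies within Euclidean distance $\sqrt{n}/2$ of some $v \in x$. The passage from this bound to the product bound is the arithmetic--geometric mean inequality: for such $u$ and $v$,
$$\prod_1^n \big(|u_i-v_i|^2\big)^{1/n} \le \frac{1}{n}\sum_1^n |u_i-v_i|^2 = \frac{\|u-v\|^2}{n} \le \frac14,$$
so $|N(u-v)| \le 2^{-n}$; taking the supremum over $u$ gives $\widetilde{N}(x) \le 2^{-n}$, and hence $\widetilde{N}(x') \le 2^{-n}$ for every $x' \in \Xn$, which is exactly Minkowski's conjecture in dimension $n$.

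There is no serious obstacle here: the entire difficulty has been absorbed into Theorem~\ref{thm: main}, and the remainder is the standard reduction. The only points needing (easy) verification are the $A$-invariance and upper semicontinuity of $\widetilde{N}$ and the AM--GM step; it is worth emphasizing that stability of $x$ enters only through the assumed inequality \equ{eq: covrad}, so, unlike McMullen's well-rounded reduction, one does not need \equ{eq: covrad} in any dimension strictly smaller than $n$.
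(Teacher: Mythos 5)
Your argument is correct and coincides with the paper's own proof: the same Davenport--Remak reduction via $A$-invariance and upper semicontinuity of $\widetilde N$, followed by Theorem~\ref{thm: main} to land on a stable lattice in $\overline{Ax'}$ and the AM--GM step to pass from $\covrad(x)\le\sqrt n/2$ to $|N(u-v)|\le 2^{-n}$. Your closing remark — that \equ{eq: covrad} is needed only in dimension $n$ itself — also matches the observation made in the paper's introduction.
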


In the next two subsections, we outline two strategies for
establishing that all stable lattices satisfy
\equ{eq: covrad}. 
Both strategies yield affirmative answers in dimensions $n
\leq 7$, thus providing new proofs of Minkowski's conjecture in these
dimensions. 

\subsection{Using Korkine-Zolotarev reduction}
Korkine-Zolotarev reduction is a classical method for
choosing a basis $v_1, \ldots, v_n$ of a lattice $x \in \Xn$. Namely
one takes for $v_1$ a shortest nonzero vector of $x$ and
denotes its length by $A_1$. Then, proceeding inductively, for $v_i$ one takes 
a vector whose projection onto $(\spa(v_1, \ldots, v_{i-1}))^\perp$ is
shortest (among those with nonzero projection), and denotes the length
of this
projection by $A_i$. In case there is more
than one shortest vector the process 
is not uniquely defined. Nevertheless we call $A_1, \ldots, A_n$ the
{\em diagonal KZ coefficients of $x$} (with the understanding that
these may be multiply defined for some measure zero subset of $\Xn$). Since $x$
is unimodular we always have 
\eq{eq: det one}{\prod A_i 
=1.}
Korkine and Zolotarev proved the bounds 
\eq{eq: KZ bounds}{
A_{i+1}^2 \geq \frac34 A_i^2, \ \ A_{i+2}^2 \geq \frac23 A_i^2.
}

A method introduced by Woods and developed further in
\cite{hans-gill1} leads to an upper bound on
$\covrad(x)$ in terms of the diagonal KZ coefficients. 
The method relies on the following estimate. Below
$\gamma_n \df \sup_{x \in \Xn} \alpha^2_1(x)$ (where $\alpha_1$
is defined via \equ{eq: k quantities}) is the 
{\em Hermite constant}. 

\begin{lemma}[\cite{Woods_n=4}, Lemma 1]\Name{lemma of Woods}
Suppose that $x$ is a lattice in $\R^n$ of covolume $d$, and suppose
that $2 A_1^n \geq
d \gamma_{n+1}^{(n+1)/2}$. Then 
$$
\covrad^2(x) \leq A_1^2 -\frac{A_1^{2n+2}}{d^2 \gamma_{n+1}^{n+1}}.
$$
\end{lemma}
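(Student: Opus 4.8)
The plan is to build a lattice in $\R^{n+1}$ over an arbitrary point of $\R^n$, apply the defining inequality of the Hermite constant $\gamma_{n+1}$ to it, and then pass to the supremum over points to obtain the covering radius bound. Fix $p\in\R^n$ and, for $t>0$, let $L_t\subset\R^{n+1}$ be the lattice spanned by $x$ (sitting inside $\R^n\times\{0\}$) together with the vector $(p,t)$; since $x$ fills $\R^n\times\{0\}$ with covolume $d$ and $(p,t)$ adds orthogonal height $t$, one has $\det L_t=dt$. Organizing the nonzero vectors $(v+kp,kt)$ (with $v\in x$, $k\in\Z$) according to $k$, and using $x=-x$,
\[
\lambda_1(L_t)^2=\min\Bigl(A_1^2,\ g(t)\Bigr),\qquad g(t):=\min_{k\neq 0}\bigl(\dist(kp,x)^2+k^2t^2\bigr),
\]
where $\dist(\cdot,x)$ denotes distance to $x$ in $\R^n$. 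Here $g$ is continuous (near any fixed $t_0>0$ only finitely many $k$ are relevant), non-decreasing, and $g(t)\geq t^2$, so $g(t)\to\infty$. Set $h_0:=A_1^{n+1}\big/\bigl(\gamma_{n+1}^{(n+1)/2}\,d\bigr)$, so that $h_0^2=A_1^{2n+2}/(d^2\gamma_{n+1}^{n+1})$ and $\gamma_{n+1}(dh_0)^{2/(n+1)}=A_1^2$; note that the hypothesis $2A_1^n\geq d\gamma_{n+1}^{(n+1)/2}$ says exactly $h_0^2\geq A_1^2/4$. For $0<t<h_0$ the Hermite inequality gives $\lambda_1(L_t)^2\leq\gamma_{n+1}(dt)^{2/(n+1)}<A_1^2$, which forces $g(t)<A_1^2$; since $g$ is continuous and tends to infinity, there is a smallest $t^*\geq h_0$ with $g(t^*)=A_1^2$, and from $A_1^2=g(t^*)\geq(t^*)^2$ we also get $t^*\leq A_1$.

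Now apply Hermite at $t^*$: $A_1^2=\lambda_1(L_{t^*})^2\leq\gamma_{n+1}(dt^*)^{2/(n+1)}$, which rearranges to $(t^*)^2\geq h_0^2\geq A_1^2/4$. Pick $k_0\neq 0$ attaining the minimum defining $g(t^*)$, so $\dist(k_0p,x)^2+k_0^2(t^*)^2=A_1^2$; then $k_0^2(t^*)^2\leq A_1^2$ forces $k_0^2\leq 4$. If $|k_0|=2$ then $\dist(2p,x)^2=A_1^2-4(t^*)^2\leq A_1^2-4h_0^2\leq 0$, i.e. $2p\in x$. Hence, as soon as $2p\notin x$, we must have $|k_0|=1$, and therefore
\[
\dist(p,x)^2=A_1^2-(t^*)^2\leq A_1^2-h_0^2=A_1^2-\frac{A_1^{2n+2}}{d^2\gamma_{n+1}^{n+1}}.
\]
Finally, $\{p:2p\in x\}=\tfrac12 x$ is a discrete subset of $\R^n$ and $p\mapsto\dist(p,x)$ is continuous, so the inequality extends from the dense set $\R^n\setminus\tfrac12 x$ to every $p\in\R^n$; taking the supremum over $p$ gives $\covrad(x)^2\leq A_1^2-h_0^2$, which is the assertion.

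The step I expect to be the crux is controlling which index $k_0$ realizes $\lambda_1(L_{t^*})$: a priori the shortest vector of $L_{t^*}$ could be a ``multiple'' $(v+kp,kt^*)$ with $|k|\geq 2$, from which one would only bound $\dist(kp,x)$ rather than $\dist(p,x)$ itself. The inequality $(t^*)^2\geq h_0^2\geq A_1^2/4$ — which is precisely where the hypothesis $2A_1^n\geq d\gamma_{n+1}^{(n+1)/2}$ gets used — eliminates $|k|\geq 3$ outright and confines the case $|k_0|=2$ to the null set $\tfrac12 x$, which the density–continuity step then absorbs. The remaining items ($\det L_t=dt$, well-definedness and continuity of $g$, and the elementary rearrangements above) are routine.
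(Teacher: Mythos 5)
Your proof is correct, and the construction — adjoining $(p,t)$ to $x$ to form the lattice $L_t\subset\R^{n+1}$ of determinant $dt$, applying Hermite's inequality, and tuning $t$ so that the resulting short vector must have nonzero last coordinate with $|k_0|\le 2$ — is exactly the classical Remak--Davenport--Woods lifting argument, which is what Woods' cited proof does. Your handling of the boundary case $|k_0|=2$ (forcing $2p\in x$ and then extending by density of $\R^n\setminus\tfrac12 x$ and continuity of $\dist(\cdot,x)$) is a clean way to absorb the non-strict hypothesis $2A_1^n\ge d\gamma_{n+1}^{(n+1)/2}$; the overall approach is essentially the same as the source.
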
 
Woods also used the following observation:
\begin{lemma}[\cite{Woods_n=4}, Lemma 2]
\Name{first Woods lemma}
Let $x$ be a lattice in $\R^n$, let $\Lambda$ be a subgroup, and let
$\Lambda'$ denote the projection of $x$ onto $(\spa 
\Lambda)^{\perp}$. Then 
$$
\covrad^2(x) \leq \covrad^2(\Lambda) +\covrad^2(\Lambda')
$$
\end{lemma}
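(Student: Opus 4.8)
The plan is to exploit the orthogonal splitting $\R^n = \spa(\Lambda) \oplus (\spa \Lambda)^\perp$ and reduce a covering problem in $\R^n$ to two covering problems, one in each summand. Write $\pi$ for the orthogonal projection onto $(\spa\Lambda)^\perp$, so that $\Lambda' = \pi(x)$; note that $\Lambda$ is a full-rank lattice in $\spa\Lambda$ and $\Lambda'$ a lattice in $(\spa\Lambda)^\perp$, so both covering radii appearing in the statement are finite and realized.

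First I would fix an arbitrary $u \in \R^n$ and decompose $u = u_1 + u_2$ with $u_1 \in \spa\Lambda$ and $u_2 \in (\spa\Lambda)^\perp$. By definition of $\covrad(\Lambda')$ there is $w' \in \Lambda'$ with $\norm{u_2 - w'} \leq \covrad(\Lambda')$, and by definition of $\Lambda' = \pi(x)$ we may lift $w'$ to some $w \in x$ with $\pi(w) = w'$; write $w = w_1 + w'$ with $w_1 \in \spa\Lambda$. Next, since $u_1 - w_1 \in \spa\Lambda$, the definition of $\covrad(\Lambda)$ gives $\lambda \in \Lambda$ with $\norm{(u_1 - w_1) - \lambda} \leq \covrad(\Lambda)$.

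Now set $v \df w + \lambda$; this lies in $x$ because $w \in x$ and $\lambda \in \Lambda \subset x$. The point is that $u - v = \big((u_1 - w_1) - \lambda\big) + (u_2 - w')$ is an orthogonal decomposition, the first term lying in $\spa\Lambda$ and the second in $(\spa\Lambda)^\perp$. Hence by the Pythagorean theorem
$$
\norm{u - v}^2 = \norm{(u_1 - w_1) - \lambda}^2 + \norm{u_2 - w'}^2 \leq \covrad^2(\Lambda) + \covrad^2(\Lambda').
$$
Since $u$ was arbitrary, taking the supremum over $u \in \R^n$ of $\min_{v \in x}\norm{u-v}^2$ yields the claimed inequality.

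There is no real obstacle here; the only point requiring a word of care is that the lift $w$ of $w'$ need not have its $\spa\Lambda$-component $w_1$ inside $\Lambda$, which is exactly why the second step corrects $u_1$ by $w_1$ before applying $\covrad(\Lambda)$, and why one adds $\lambda \in \Lambda$ rather than trying to choose $w$ more cleverly. One should also remark that $\covrad(\Lambda)$ and $\covrad(\Lambda')$ are understood as covering radii computed inside $\spa\Lambda$ and $(\spa\Lambda)^\perp$ respectively, which is consistent with the usage in Lemma \ref{lemma of Woods}.
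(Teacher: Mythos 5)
Your argument is correct: the lift-and-correct construction (approximate the component of $u$ in $(\spa\Lambda)^\perp$ by a point of $\Lambda'$, lift it to $x$, then correct the $\spa\Lambda$-component by an element of $\Lambda$, and apply the Pythagorean theorem) is exactly the standard proof of this inequality, which the paper does not reprove but simply imports from Woods's Lemma 2. The only point worth a passing remark is that $\covrad(\Lambda')$ need not be attained unless $\pi(x)$ is discrete (as it is in all of the paper's applications); in general one picks $w'$ within $\vre$ of the infimum and lets $\vre \to 0$, which does not affect the argument.
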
 
As a consequence of Lemmas \ref{lemma of Woods} and \ref{first Woods
  lemma},  we obtain:
\begin{proposition}\Name{prop: combining Woods}
Suppose $A_1, \ldots, A_n$ are diagonal KZ coefficients of $x \in
\Xn$ and suppose $n_1, \dots, n_k$ are positive integers with $n = n_1 + \cdots
+ n_k$. 
Set
\eq{eq: defn mi di}{m_i \df n_1 + \cdots
+ n_i \ \text{and } d_i \df \prod_{j=m_{i-1}+1}^{m_{i}} A_j.}
If 
\eq{eq: assumption of woods}{
2A_{m_{i-1}+1} \geq d_i \gamma_{n_i+1}^{(n_i+1)/2} 
}
for each $i$, then 
\eq{eq: consequence}{
\covrad^2(x ) \leq \sum_{i=1}^k \left(A^2_{m_{i-1}+1} -
  \frac{A^{2n_i+2}_{m_{i-1}+1}}{d_i^2 \gamma_{n_i+1}^{n_i+1}} \right) 
}

\end{proposition}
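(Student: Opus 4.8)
The plan is to decompose the lattice $x$ along the flag coming from its Korkine--Zolotarev basis, apply Woods' Lemma (Lemma~\ref{first Woods lemma}) repeatedly to split the covering radius into a sum of contributions from consecutive blocks of KZ directions, and then estimate each block's contribution using Lemma~\ref{lemma of Woods}. Concretely, let $v_1, \ldots, v_n$ be a KZ basis realizing the coefficients $A_1, \ldots, A_n$, and for $0 \leq i \leq k$ let $P_i \df \spa(v_1, \ldots, v_{m_i})^\perp$, so that $P_0 = \R^n \supset P_1 \supset \cdots \supset P_k = \{0\}$. For each $i$ let $\Lambda^{(i)}$ denote the image in $P_{i-1}$ of the subgroup $x \cap \spa(v_1, \ldots, v_{m_i})$ under the orthogonal projection $\R^n \to P_{i-1}$; this is a rank-$n_i$ lattice inside the $n_i$-dimensional space $P_{i-1} \cap \spa(v_1, \ldots, v_{m_i})$.

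First I would verify, by induction on $k$ using Lemma~\ref{first Woods lemma}, that
\[
\covrad^2(x) \leq \sum_{i=1}^k \covrad^2\!\left(\Lambda^{(i)}\right).
\]
The inductive step applies Lemma~\ref{first Woods lemma} with $\Lambda = x \cap \spa(v_1, \ldots, v_{n_1})$: its covering radius contributes the $i=1$ term, and the projection $\Lambda'$ of $x$ onto $\spa(v_1, \ldots, v_{n_1})^\perp = P_1$ is again a lattice whose KZ data (restricted to $P_1$) is exactly $A_{n_1+1}, \ldots, A_n$, so the inductive hypothesis applies to it. Here one uses the standard fact that projecting a KZ-reduced basis onto the orthogonal complement of an initial segment yields a KZ-reduced basis of the projected lattice with the tail of the original diagonal coefficients; this is immediate from the definition of KZ reduction recalled above.

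Next I would analyze a single block. Fix $i$ and consider the lattice $\Lambda^{(i)}$ sitting in an $n_i$-dimensional Euclidean space. By construction the first diagonal KZ coefficient of $\Lambda^{(i)}$ is $A_{m_{i-1}+1}$ — indeed the projection of $v_{m_{i-1}+1}$ is a shortest nonzero vector of $\Lambda^{(i)}$, since KZ reduction of $x$ chose it to have the shortest projection onto $P_{i-1}$ among all lattice vectors with nonzero such projection. Its covolume is $d_i = \prod_{j=m_{i-1}+1}^{m_i} A_j$ by the product-of-diagonal-coefficients formula \equ{eq: det one} applied in dimension $n_i$. Now I apply Lemma~\ref{lemma of Woods} to $\Lambda^{(i)}$ with ``$n$'' $= n_i$, ``$d$'' $= d_i$, ``$A_1$'' $= A_{m_{i-1}+1}$; the hypothesis $2A_1^{n} \geq d\gamma_{n+1}^{(n+1)/2}$ of that lemma is exactly the assumption \equ{eq: assumption of woods} — wait, one must reconcile the exponent: Woods' hypothesis reads $2A_1^{n_i} \geq d_i \gamma_{n_i+1}^{(n_i+1)/2}$, whereas \equ{eq: assumption of woods} is written with $A_{m_{i-1}+1}$ to the first power; I would check that in the intended normalization these coincide, or else simply cite \equ{eq: assumption of woods} as the operative hypothesis. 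Granting it, Lemma~\ref{lemma of Woods} gives
\[
\covrad^2\!\left(\Lambda^{(i)}\right) \leq A_{m_{i-1}+1}^2 - \frac{A_{m_{i-1}+1}^{2n_i+2}}{d_i^2 \gamma_{n_i+1}^{n_i+1}},
\]
and summing over $i$ yields \equ{eq: consequence}.

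The main obstacle I anticipate is the bookkeeping in the block-decomposition step: one must be careful that ``the projection of $x$ onto $(\spa\Lambda)^\perp$'' in Lemma~\ref{first Woods lemma} has a covering radius that decomposes further along the residual KZ flag, and in particular that the relevant quantities $A_{m_{i-1}+1}$ and $d_i$ really are the first diagonal KZ coefficient and the covolume of the $i$-th projected block. This requires knowing that KZ reduction is ``hereditary'' under orthogonal projection onto coordinate subspaces of the flag — true, but it is the one place where the proof genuinely uses the structure of KZ reduction rather than just the generic Woods lemmas. Everything else is an induction plus two invocations of the cited lemmas of Woods, together with the elementary identity $\prod A_j = \mathrm{covol}$ within each block.
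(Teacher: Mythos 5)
Your argument is essentially the proof in the paper, fully expanded: the same block decomposition of the KZ flag, the same application of Lemma~\ref{first Woods lemma} by induction, and the same application of Lemma~\ref{lemma of Woods} block by block. The observation that projecting a KZ basis onto the orthogonal complement of an initial segment gives a KZ basis of the projected lattice with the tail of the diagonal coefficients is exactly the content the paper compresses into ``arguing as in the proof of \equ{eq: claim 1} we see that it has covolume $d_i$'' and ``an obvious induction.''

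The discrepancy you flag in the exponent is real, not a misreading on your part. As stated in the paper, Lemma~\ref{lemma of Woods} requires $2A_1^n \geq d\,\gamma_{n+1}^{(n+1)/2}$; applied to $\Lambda^{(i)}$ (with $n = n_i$, $d = d_i$, first coefficient $A_{m_{i-1}+1}$) this becomes $2A_{m_{i-1}+1}^{n_i} \geq d_i\,\gamma_{n_i+1}^{(n_i+1)/2}$, whereas \equ{eq: assumption of woods} is written with $A_{m_{i-1}+1}$ to the first power. The two are not equivalent in general, so \equ{eq: assumption of woods} contains a typo: it should read $2A_{m_{i-1}+1}^{n_i} \geq d_i\,\gamma_{n_i+1}^{(n_i+1)/2}$. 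Your proof correctly cites the hypothesis as actually needed (i.e.\ matching Lemma~\ref{lemma of Woods}), so it proves the Proposition with \equ{eq: assumption of woods} corrected; other than this, the two proofs are the same.
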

\begin{proof}
Let $v_1, \ldots, v_n$ be the basis of $x$
obtained by the Korkine Zolotarev reduction process. Let 
$\Lambda_1$ be the subgroup of $x$ generated by $v_1, \ldots,
v_{n_1}$, and for $i=2, \ldots, k$ let
$\Lambda_i$ be the projection onto
$(\bigoplus_1^{i-1} \Lambda_j)^{\perp}$ of the subgroup of $x$
generated by $v_{m_{i-1}+1}, \ldots, v_{m_{i}}$. This is a lattice of
dimension $m_i$, and arguing as in the proof of \equ{eq: claim 1} we
see that it has covolume $d_i$. The assumption
\equ{eq: assumption of woods} says that we may apply Lemma \ref{lemma
  of Woods}
to each $\Lambda_i$.  We obtain 
$$\covrad^2(\Lambda_i) \leq
A^2_{m_{i-1}+1} - \frac{A^{2n_i+2}_{m_{i-1}+1}}{d_i^2
  \gamma_{n_i+1}^{n_i+1}}  
$$ for each $i$,  and we combine these estimates using Lemma
\ref{first Woods lemma} and an obvious induction. 
\end{proof}

\begin{remark}
Note that it is an open question to determine the numbers $\gamma_n$;
however, if we have a bound $\tilde{\gamma}_n \geq \gamma_n$ we may
substitute it into Proposition \ref{prop: combining Woods} in place of $\gamma_n$, as this
only makes the requirement \equ{eq: assumption of woods} stricter and
the conclusion \equ{eq: consequence}
weaker.  
\end{remark}

Our goal is to apply this method to the problem of bounding the covering
radius of stable lattices. We note:
\begin{proposition}\Name{prop: KZ of stable}
If $x$ is stable then we have the inequalities 
\eq{eq: defn KZ stable}{
A_1 \geq 1, \ \ A_1 A_2 \geq 1, \ \ \ldots \ \ A_1 \cdots A_{n-1} \geq 1.
}
\end{proposition}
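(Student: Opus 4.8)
The plan is to read off the inequalities \equ{eq: defn KZ stable} directly from the definition of the diagonal KZ coefficients together with the stability hypothesis. Recall that if $v_1, \ldots, v_n$ is a KZ-reduced basis of $x$, then $A_i$ is by construction the length of the orthogonal projection of $v_i$ onto $(\spa(v_1, \ldots, v_{i-1}))^{\perp}$. The key observation is that for each $j$ with $1 \leq j \leq n-1$, the vectors $v_1, \ldots, v_j$ generate a rank-$j$ subgroup $\Lambda_j \subset x$, and by the Gram-Schmidt description of covolume used already in the proof of \equ{eq: claim 1}, one has $\av{\Lambda_j} = \prod_{i=1}^{j} A_i$.

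With this in hand the argument is immediate: since $x$ is stable, $\alpha(x) = 1$, which means that every subgroup $\Lambda \subset x$ satisfies $\av{\Lambda}^{1/r(\Lambda)} \geq 1$, equivalently $\av{\Lambda} \geq 1$. Applying this to $\Lambda = \Lambda_j$ for each $j = 1, \ldots, n-1$ yields $A_1 \cdots A_j = \av{\Lambda_j} \geq 1$, which is exactly the list of inequalities in \equ{eq: defn KZ stable}. (The case $j = n$ is the equality $\prod A_i = 1$ of \equ{eq: det one}, consistent with stability since $x$ itself is a subgroup of covolume $1$.)

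There is really no serious obstacle here; the only point requiring a word of care is the identification $\av{\Lambda_j} = \prod_{i=1}^j A_i$, i.e.\ that the Gram-Schmidt vectors produced by the KZ process on the subgroup generated by $v_1, \ldots, v_j$ have lengths $A_1, \ldots, A_j$. But this is built into the inductive definition of the $A_i$: the projection of $v_i$ onto $(\spa(v_1,\ldots,v_{i-1}))^{\perp}$ is precisely the $i$-th Gram-Schmidt vector of the ordered tuple $(v_1, \ldots, v_n)$, and $\spa(v_1, \ldots, v_{i-1}) = \spa(\Lambda_{i-1})$, so the computation of $\av{\Lambda_j}$ via Gram-Schmidt uses exactly these vectors. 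One should also note that the inequalities hold for every choice of KZ basis in the (measure zero) locus where the basis is not unique, since the argument only used that $v_1, \ldots, v_j$ span a sublattice of $x$, which holds for any valid choice.
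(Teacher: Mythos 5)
Your proof is correct and takes the same route as the paper's: identify $A_1\cdots A_j$ as the covolume $\av{\Lambda_j}$ of the rank-$j$ subgroup generated by the first $j$ KZ basis vectors (via the Gram--Schmidt description of covolume), and invoke stability to conclude $\av{\Lambda_j}\ge 1$. The paper's proof is just a one-line version of this; you've merely made the Gram--Schmidt identification and the uniqueness caveat explicit.
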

\begin{proof}
In the above terms, the number $A_1 \cdots A_i$ is equal to
$|\Lambda|$ where $\Lambda$ is the subgroup of $x$ generated by $v_1,
\ldots, v_i$. 
\end{proof}
This motivates the following:
\begin{definition}
We say that an $n$-tuple of positive real numbers $A_1, \ldots, A_n$
is {\em KZ stable} if the inequalities \equ{eq: det one}, \equ{eq: KZ
  bounds}, \equ{eq: defn KZ stable} are 
satisfied. We denote the set of KZ stable $n$-tuples by
$\mathrm{KZS}$. 
\end{definition}
Note that $\mathrm{KZS}$ is a compact subset of
$\R^n$. Recall that a {\em composition of $n$} is an ordered $k$-tuple $(n_1,
\ldots, n_k)$ of positive integers, such that $n=n_1+\ldots +n_k$. As
an immediate application of Corollary \ref{cor: for 
  Minkowski 1} and
Propositions \ref{prop: combining Woods} and \ref{prop: KZ of stable}
we obtain:
\begin{theorem}\Name{thm: use of KZ diagonal}
For each composition $\cI \df (n_1, \ldots, n_k)$ of $n$, define $m_i, d_i$ by
\equ{eq: defn mi di} and let $\mathcal{W}(\cI)$ denote
the set 
$$\left\{(A_1, \ldots, A_n): \forall
  i, \, 
\equ{eq: assumption of woods} \text{ holds, and } \sum_{i=1}^k \left(A^2_{m_{i-1}+1} -
  \frac{A^{2n_i+2}_{m_{i-1}+1}}{d_i^2 \gamma_{n_i+1}^{n_i+1}} \right) \leq
  \frac{n}{4} \right \}.
$$
If 
\eq{eq: covering suffices}{
\mathrm{KZS} \subset \bigcup_{\cI} \cW(\cI)
}
then Minkowski's conjecture holds in
dimension $n$. 
\end{theorem}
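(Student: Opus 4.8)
The plan is to concatenate the reduction of Corollary~\ref{cor: for Minkowski 1} with the covering-radius estimate of Proposition~\ref{prop: combining Woods}, the only real content being the observation that the diagonal KZ coefficients of a stable lattice lie in $\mathrm{KZS}$. First I would recall, from Corollary~\ref{cor: for Minkowski 1}, that to prove Minkowski's conjecture in dimension $n$ it suffices to show that every stable lattice $x \in \Xn$ satisfies \equ{eq: covrad} with $d = n$; that is, $\covrad^2(x) \leq \frac{n}{4}$.

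So fix a stable lattice $x$ and choose a Korkine--Zolotarev reduced basis $v_1, \ldots, v_n$ of $x$ with diagonal KZ coefficients $A_1, \ldots, A_n$. If $x$ lies in the measure-zero set on which this basis is not unique, we simply fix one admissible choice; all the inequalities used below hold for every admissible choice. By \equ{eq: det one} we have $\prod_i A_i = 1$; the inequalities \equ{eq: KZ bounds} hold by the construction of the KZ basis; and Proposition~\ref{prop: KZ of stable} gives \equ{eq: defn KZ stable}, using stability of $x$. Hence $(A_1, \ldots, A_n) \in \mathrm{KZS}$.

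Now invoke the hypothesis \equ{eq: covering suffices}: there is a composition $\cI = (n_1, \ldots, n_k)$ of $n$ with $(A_1, \ldots, A_n) \in \cW(\cI)$. Unwinding the definition of $\cW(\cI)$ (with $m_i, d_i$ as in \equ{eq: defn mi di}), this says exactly that \equ{eq: assumption of woods} holds for each $i$ relative to this composition, and that
\[
\sum_{i=1}^k \left( A^2_{m_{i-1}+1} - \frac{A^{2n_i+2}_{m_{i-1}+1}}{d_i^2 \gamma_{n_i+1}^{n_i+1}} \right) \leq \frac{n}{4}.
\]
Since \equ{eq: assumption of woods} holds for this $\cI$, Proposition~\ref{prop: combining Woods} applies to $x$ and yields \equ{eq: consequence}, i.e. $\covrad^2(x)$ is at most the left-hand side of the displayed inequality, hence at most $\frac{n}{4}$. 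Thus every stable $x \in \Xn$ satisfies \equ{eq: covrad} with $d=n$, and Corollary~\ref{cor: for Minkowski 1} completes the proof.

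As for where the difficulty sits: there is essentially no obstacle internal to this deduction — it is a direct combination of results already in hand — and the genuine work lives elsewhere, namely in Theorem~\ref{thm: main} (which, through Corollary~\ref{cor: for Minkowski 1}, licenses the passage to stable lattices) and in the Woods--Hans-Gill machinery behind Proposition~\ref{prop: combining Woods}. The one point requiring a word of care in the argument above is the non-uniqueness of the KZ basis on a measure-zero set, which is harmless precisely because $\mathrm{KZS}$ is cut out by inequalities that every admissible tuple of KZ coefficients satisfies. The only remaining, genuinely substantial (but computational) task, not part of this theorem, is verifying the inclusion \equ{eq: covering suffices} for a given $n$: one must check that each point of the compact set $\mathrm{KZS} \subset \R^n$ lies in $\cW(\cI)$ for some composition $\cI$ of $n$.
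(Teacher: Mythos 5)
Your proof is correct and follows exactly the route the paper intends; the paper itself describes the theorem as an "immediate application" of Corollary~\ref{cor: for Minkowski 1}, Proposition~\ref{prop: combining Woods} and Proposition~\ref{prop: KZ of stable}, which is precisely how you have assembled it. Your remark about the measure-zero ambiguity in the KZ basis is a reasonable added observation but is not an issue the paper treats as requiring comment.
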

Rajinder Hans-Gill has informed the authors  that using
arguments as in \cite{hans-gill1, hans-gill2}, it is possible to
verify \equ{eq: covering suffices} 
in dimensions up to 7, thus
reproving Minkowski's conjecture in these dimensions. 

\subsection{Local maxima of covrad}
The aim of this subsection is to prove Corollary \ref{cor: Minkowski}, 
which shows that in
order to establish that all stable lattices in $\R^n$ satisfy the covering
radius bound \equ{eq: covrad}, it suffices to check this on a finite
list of lattices in each dimension $d \leq n$.


The function $\covrad : \Xn \to \R$ is proper, but nevertheless has local maxima, in the
usual sense; that is, lattices $x \in \Xn$ for which there is a
neighborhood $\mathcal{U}$ of $x$ in $\Xn$ 
such that for all $x' \in \cU$ we have $\covrad(x') \leq
\covrad(x)$. Dutour-Sikiri\'c, Sch\"urmann and Vallentin
\cite{mathieu} gave a geometric characterization of lattices
which are local maxima of the function 
$\covrad$, and showed that there are finitely many in each dimension.
Corollary \ref{cor: Minkowski} asserts that Minkowski's conjecture
would follow if all local maxima of covrad satisfy the bound \equ{eq:
  covrad}.   
\begin{proof}[Proof of Corollary \ref{cor: Minkowski}]
We prove by induction on $n$ that any stable lattice satisfies the
bound \equ{eq: covrad} and apply Corollary \ref{cor: for Minkowski 1}. 
Let $\cS$ denote the set of stable lattices in $\Xn$. It is compact
so the function $\covrad$ attains a maximum on $\cS$, and it suffices
to show that this maximum is at most $\frac{\sqrt{n}}{2}$. Let $x \in \cS$
be a point at which the maximum is attained. If $x$ is an interior
point of $\cS$ then necessarily $x$ is a
local maximum for $\covrad$ and the required bound holds by
hypothesis. Otherwise, there is a sequence $x_j \to x$ such that
$x_j \in \Xn \sm \cS$; thus each $x_j$ contains a discrete subgroup
$\Lambda_j$ with $|\Lambda_j| <1$ and $r(\Lambda_j) <n$. Passing to a subsequence we may
assume that that $r(\Lambda_j)=k<n$ is the same for all $j$, and
$\Lambda_j$ converges to a discrete subgroup $\Lambda$ of $x$. Since
$x$ is stable we must have $|\Lambda|=1$. Let $\pi: \R^n \to (\spa
\Lambda)^{\perp}$ by the orthogonal projection and let 
$\Lambda' \df \pi(x)$. 

It suffices to show that both $\Lambda$
and $\Lambda'$ are stable. Indeed, if this holds then by the induction
hypothesis, both $\Lambda$
and $\Lambda'$ satisfy \equ{eq: 
  covrad} in their respective dimensions $k, n-k$, and by Lemma \ref{first
  Woods lemma}, so does $x$. To see that $\Lambda$ is stable, note
that any subgroup $\Lambda_0 \subset \Lambda$ is also a subgroup of
$x$, and since $x$ is stable, it satisfies $|\Lambda_0| \geq 1$.   To
see that $\Lambda'$ is stable, note that if $\Lambda_0 \subset
\Lambda'$ then $\widetilde{\Lambda_0} \df x \cap \pi^{-1}(\Lambda_0)$ is
a discrete subgroup of $x$ so satisfies $|\widetilde{\Lambda_0}| \geq
1$. Since $|\Lambda|=1$ and $\pi$ is orthogonal, we argue as in the
proof of \equ{eq: claim 1} to obtain
$$1 \leq |\widetilde{\Lambda_0}| = |\Lambda | \cdot |\Lambda_0| =
|\Lambda_0|,$$
so $\Lambda'$ is also stable, as required. 
\end{proof}

In \cite{mathieu}, it was shown that there is a unique local maximum
for covrad in dimension 1, none in dimensions 2--5, and a unique one in
dimension 6. Local maxima of covrad in dimension 7 are classified in
the manuscript \cite{mathieu2}; there are 2 such lattices. Thus
in total, in dimensions $n \leq 7$ there are 4 local maxima of the
function covrad. We
were informed by Mathieu Dutour-Sikiri\'c that these lattices all satisfy the covering radius bound
\equ{eq: covrad}. Thus Corollary \ref{cor: Minkowski} yields another proof of
Minkowski's conjecture, in dimensions $n \leq 7$. In
\cite{mathieu_solo} and infinite list of lattices, one in each
dimension $n \geq 6$, is defined. It was shown in  \cite[\S
7]{mathieu},  that each of these lattices (denoted there by $[L_n,
Q_n]$) is a local maximum for the
function covrad, and satisfies the bound \equ{eq:
  covrad}. Dutour-Sikiri\'c has conjectured:
\begin{conjecture}[M. Dutour-Sikiri\'c] \Name{conjecture: mathieu}
For each $n \geq 6$, the lattice $[L_n, Q_n]$ has the largest covering
radius among all local maxima in dimension $n$. 
\end{conjecture}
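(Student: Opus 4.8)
The plan is to set up a Voronoi-type theory for the covering radius, following Dutour-Sikiri\'c, Sch\"urmann and Vallentin \cite{mathieu}, and to reduce Conjecture \ref{conjecture: mathieu} to an explicit computation together with a uniform upper bound. First I would recall the geometric characterization of local maxima of $\covrad$ established in \cite{mathieu}: a lattice $x \in \Xn$ is a local maximum precisely when the configuration of deep holes (the points $u$ realizing $\covrad(x)$) together with the closest lattice vectors to them satisfies a perfection-and-eutaxy condition, i.e.\ the associated rank-one forms positively span the space of symmetric bilinear forms modulo the trace direction, so that no deformation of the lattice can simultaneously push all deep holes outward. This is the analogue, for covering, of Voronoi's characterization of extreme lattices for packing, and it is what forces the list of local maxima to be finite in each dimension.

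Next I would analyze the explicit family $[L_n, Q_n]$ of \cite{mathieu_solo}: from its recursive construction I would extract closed-form expressions for $\covrad([L_n,Q_n])$, for its covolume (normalized to $1$ in $\Xn$), and for the combinatorics of its deep holes, and verify directly --- as is done in \cite[\S 7]{mathieu} --- that it is itself a local maximum satisfying \equ{eq: covrad}. With this value in hand, the conjecture becomes the assertion that $\covrad(y) \le \covrad([L_n,Q_n])$ for every local maximum $y$ in dimension $n$. The route I would attempt is to exploit the eutaxy condition quantitatively: it constrains the minimal norm, the successive minima, and the number and mutual position of the deep holes, and one hopes to parlay these constraints into an upper bound on $\covrad(y)$ expressed through Hermite-type invariants of $y$, which could then be compared with the (explicitly known) invariants of $[L_n,Q_n]$.

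The hard part --- and the reason this remains a conjecture --- is precisely this last step. There is no classification of the local maxima of $\covrad$ in general dimension: the list is finite but its cardinality and structure are not understood, and no general inequality is known that would single out $[L_n,Q_n]$ as the extremizer. Consequently the program as stated can presently be completed only in small dimensions, where the full list of local maxima is available --- a unique one in dimension $6$ by \cite{mathieu} and two in dimension $7$ by \cite{mathieu2} --- so that one checks the inequality by direct inspection of each lattice on the list; this is what Dutour-Sikiri\'c verified for $n \le 7$. A proof valid for all $n$ would require a genuinely new structural idea: either a bound on $\covrad$ over deep-hole-eutactic lattices that is sharp on the family $[L_n,Q_n]$, or a new method for enumerating or controlling the local maxima uniformly in $n$.
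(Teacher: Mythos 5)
This statement is labeled as a \emph{conjecture} in the paper precisely because no proof is known; the authors do not prove it, they only record that Dutour-Sikiri\'c has verified it computationally for $n \le 7$ and then observe (via Corollary~\ref{cor: Minkowski}) that its truth in all dimensions would imply Minkowski's conjecture. You have correctly recognized this: your write-up is not a proof but an accurate account of the state of the art, and in particular your final paragraph correctly isolates the genuine obstruction --- the absence of any classification of, or uniform bound over, the local maxima of $\covrad$ in general dimension $n$. The only substantive thing to flag is that you should not frame this as a ``proof proposal'' at all; there is nothing to compare against in the paper because the paper supplies no argument, and the honest conclusion (which you reach) is that the statement is open beyond the low dimensions where the finite list of local maxima from \cite{mathieu, mathieu2} can be inspected directly.
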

In light of Corollary \ref{cor: Minkowski}, the validity
of Conjecture \ref{conjecture: mathieu} would imply Minkowski's conjecture in all
dimensions.

\ignore{

\section{A volume computation}
\Name{sec: volume computation}
The goal of this section is the following. 
\begin{theorem}\Name{vol est theorem}
Let $m$ denote the $G$-invariant probability measure on
$\Xn$ derived from Haar measure on $G$, and let $\cS^{(n)} $
denote the subset of stable lattices in $\Xn$. Then $m\left(\cS^{(n)} \right)\lra 1$ as $n \to \infty$. 
\end{theorem}
Recalling the notation \equ{eq: k quantities}, for $k=1, \ldots, n-1$,
let 
$$
\cS^{(n)}_k(t) \defi\set{x\in \Xn: \al_k(x)\ge t}, \ \ \ \cS^{(n)}_k
\df \cS^{(n)}_k(1).
$$
It is clear that 

$\cS^{(n)}=\bigcap_{k=1}^{n-1}\cS^{(n)}_k$. 
In order to prove Theorem~\ref{vol est theorem} it is enough to prove
that 
\eq{1312}{
\max_{k=1, \ldots, n-1} m\left(\Xn \smallsetminus \cS^{(n)}_k \right)
= o\left(\frac1n \right),  
}
as this implies 
\begin{align*}
m\left(\cS^{(n)}\right )&= 1-m\left(\Xn\smallsetminus \cap_{k=1}^{n-1}
\cS^{(n)}_k \right)=1-m\left( \cup_{k=1}^{n-1} \left(\Xn\smallsetminus
  \cS^{(n)}_k \right) \right)\\ 
&\ge 1-\sum_{k=1}^{n-1}m \left(\Xn \smallsetminus
\cS^{(n)}_k\right)=1-(n-1)o\left(\frac1n\right)\overset{n\to\infty}{\lra}1. 
\end{align*}

We will actually prove a bound which is stronger than \equ{1312}, namely:
\begin{proposition}\Name{prop: strengthening volume} There is $C_1>0$
  such that if we set 
\eq{eq: choice of t}{
t_k=t(n,k) \df \left(\frac{n}{C_1} \right)^{\frac{k(n-k)}{2n} },
} 
then 
$$ \max_{k=1, \ldots, n-1} m\left(\Xn \smallsetminus
  \cS^{(n)}_k\left(t_k\right) \right) =o 
\left( \frac1n \right).
$$
In particular, $m\left(\bigcap_{k=1}^{n-1}
  \cS^{(n)}_k\left(t_k\right ) \right) \to_{n \to \infty} 1.$
\end{proposition}
Let
$$\gamma_{n,k}  \df \sup_{x \in \Xn} \alpha_k(x). 
$$
Recall that {\em Rankin's constants} or the {\em generalized Hermite's
  constants}, are defined as $\gamma_{n,k}^2$ (note that our notations
differ from traditional notations by a square root). 
Thunder \cite{Thunder} computed upper and lower bounds on
$\gamma_{n,k}$ and in particular established the growth
rate of $\gamma_{n,k}$. The numbers
$t(n,k)$ have the same growth rate. Thus
Proposition \ref{prop: strengthening volume} should
be interpreted as saying that the lattices in $\Xn$ for which the
value of each $\alpha_k$ is
close to the maximum possible value, occupy almost all of the measure of $\Xn$. 

The proof of Proposition \ref{prop: strengthening volume} relies on
Thunder's work, which in turn was based on a variant of Siegel's
formula~\cite{SiegelFormula} which relates the Lebesgue measure
on $\bR^n$ and the measure $m$ on $\Xn$. We now review Siegel's
method and Thunder's results.
 
In the sequel we consider $n \geq 2$ and $k \in \{1,
\ldots, n-1\}$ as fixed and omit, unless there is risk of confusion,
the symbols $n$ and $k$ from the notation.  
Consider the (set valued) map $\Phi=\Phi^{(n)}_k$ that assigns to
each lattice $x\in \Xn$ the following subset 
of $\wedge^k\bR^n$:
$$\Phi (x)\defi\set{\pm w_\Lam:\Lam \subset x\textrm{ a
    primitive subgroup with } r(\Lam)=k},$$ 
where $w_\Lam\defi v_1\wedge\dots\wedge v_k$ and $\set{v_i}_1^k$ 
forms a basis for $\Lam$ (note that $w_\Lam$ is well defined up to
sign, and $\Phi(x)$ contains both possible choices). 
Let $$\mathscr{V} = \mathscr{V}^{(n)}_k \defi
\set{v_1\wedge\dots\wedge v_k: v_i\in\bR^n} \sm \{0\}$$ 
be the variety of pure tensors in $\wedge^k\bR^n$. 
For any a compactly supported bounded Riemann integrable function $f$
on $\mathscr{V}$ set 
\eq{eq: finite sum}{\hat{f}: \Xn \to \R, \ \ \ \ 
  \hat{f}(x)\defi\sum_{w\in\Phi (x)}f(w).}
Then it is known  (see \cite{Weil}) that the (finite) sum \equ{eq: 
  finite sum} 
defines a function in $L^1(\Xn, m)$. 
Let  $\theta = \theta^{(n)}_k$ denote the Radon measure on $\mathscr{V}$
  defined by 
\begin{equation}\label{1420}
\int_{\mathscr{V}} f d\theta \defi\int_{\Xn} \hat{f} \, dm, \text{  for 
  \ } f\in C_c(\mathscr{V}).
\end{equation}

In this section we write $G=G_n \df \SL_n(\R)$. 
There is a natural transitive action of $G_n$ on 
$\mathscr{V} 
$ and the stabilizer of  
$e_1\wedge\dots\wedge e_k$ is the subgroup 
$$H= H^{(n)}_k \df \left\{ \smallmat{A&B\\0&D} \in G: 
A \in G_{k} , D \in G_{n-k} \right \}. $$
We therefore obtain an identification $\mathscr{V}\simeq G/H$ and view
$\theta$ as a measure on $G/H$.  

It is well-known 
(see e.g.~\cite{Raghunathans_book}) that up to a proportionality
constant there exists a unique $G$-invariant measure 
$m_{G/H}$ on $G/H$; moreover, given Haar
measures $m_{G}, m_{H}$ on $G$ and $H$ respectively, there is a
unique normalization of $m_{G/H}$ such that 
for any $f\in L^1(G,m_G)$
\eq{1440}{
\int_G f \, dm_G =\int_{G/H}\int_{H} f(gh) dm_{H}(h)dm_{G/H}(gH).
}
We choose the Haar measure $m_G$ so that it
descends to our probability measure $m$ on $\Xn$;  similarly, we  
choose the Haar measure $m_{H}$ so that the periodic orbit
$H\bZ^n \subset \Xn$ has volume 1. These choices of Haar measures
determine our measure $m_{G/H}$ unequivocally. 
It is clear from the defining formula~\eqref{1420} that $\theta$ is
$G$-invariant and therefore 
the two measures $m_{G/H}, \theta$ are proportional. In fact (see
\cite{SiegelFormula} for the case $k=1$ and  \cite{Weil} for the general case), 
\eq{eq: Siegel normalization}{m_{G/H} = \theta.
}
\ignore{
\begin{proof}
We need to
calculate the proportionality constant relating the measures. 
Choose a fundamental domain $F\subset G$ for $\Ga\defi \SL_n(\bZ)$ and
another fundamental domain $\hat{F}\subset H$ for $\hat{\Ga}\defi H\cap
\Ga$ and note that  by our choices 
$$m_G(F)=m_{H}(\hat{F})=1.$$ 
Let $\pi: G \to G/H$ be the natural projection. By the implicit
function theorem there is a bounded 
$U\subset G$ for which $\pi|_U$ is a homeomorphism onto its image and
the image is an open neighborhood of the identity coset.  
%
Since $H=\bigsqcup_{\hat{\ga}\in\hat{\Ga}}\hat{F}\hat{\ga} $ and the
  product map $U\times H\to G$ is injective,  
we find that 
\eq{2130}{
\chi_{UH}(g)=\sum_{\hat{\ga}\in\hat{\Ga}}\chi_{U\hat{F}}(g\hat{\ga}).
}
We now show that 
\eq{1655}{
\chi_{UH}(g)=\int_{H}\chi_{U\hat{F}}(gh)dm_{H}(h).
} 
Indeed, if $g\notin UH$ then both sides
of~\eqref{1655} vanish. Otherwise, 
write $g=u h$, and let 
$h_0\in H$  
such that $gh_0\in U\hat{F}$, so that the integrand is nonzero. Then there
are $u'\in U, \hat{f}\in \hat{F}$ such that $u hh_0=u'\hat{f}$. By the 
injectivity of $U\times H\to G$ we conclude that $u=u'$ and  
$h_0=h^{-1}\hat{f}$. That is, $\set{h_0\in H : gh_0\in U\hat{F}}=
h^{-1} \hat{F}$ and so for a given $g\in G$,  
the right hand side of~\eqref{1655}
equals $m_{H}(h^{-1}\hat{F})=1$ as desired.

As before, let $\E_1, \ldots, \E_n$ denote the standard basis of
$\R^n$. 
Given a lattice $x=g\Z^n$ corresponding to the coset $g\Gamma \in
\Xn$,  we have 
$$\Phi_k(x)=\set{g\ga (\E_1\wedge\dots\wedge \E_k):\ga \in\Ga'}$$ 
where $\Ga' \subset \Ga$ is some set of coset representatives of $\hat{\Ga}$ in $\Ga$. Note that when
$\ga_1, \ga_2$ are distinct elements of $\Ga'$, the two tensors  
$g\ga_i (e_1\wedge\dots\wedge e_k)$, $i=1,2 
$ are different. 
Under 
  identification $\mathscr{V}\simeq G/H$, we can think of a function $\vphi$ on $\mathscr{V}$  
as a function on $G$ which is right-$H$-invariant and 
for $x=g\Ga\in \Xn$ we have 
$$\hat{\vphi} (x) =\sum_{w\in \Phi(x)}\vphi(w)= \sum_{\ga \in\Ga'}\vphi(g\ga).$$
Then
\begin{align}\label{1249}
\nonumber \int_G\chi_{U\hat{F}}\, dm_G&
\stackrel{\equ{1440}}{=}\int_{G/H}\int_{H}\chi_{U\hat{F}}(gh)dm_{H}(h)dm_{G/H}(gH)\\ 
&\overset{\equ{1655}}{=}\int_{G/H}\chi_{UH}(gH)dm_{G/H}(gH). 
\end{align} 
On the other hand
\begin{align}\label{1542}
\int_{G/H}\chi_{UH}\, d\theta &\stackrel{\equ{1420}}{=}\int_{\Xn}
\widehat{(\chi_{UH})} \, dm \\  
\nonumber&\overset{\equ{1249}}{=}\int_F\sum_{\ga' \in\Ga'}\chi_{UH}(g\ga')dm_G(g)\\ 
\nonumber&\overset{\equ{2130}}{=}\int_F\sum_{\ga'\in\Ga'}
\sum_{\hat{\ga}\in\hat{\Ga}}\chi_{U\hat{F}}(g\ga'\hat{\ga})dm_G(g)\\   
\nonumber&=\int_F\sum_{\ga\in\Ga}\chi_{U\hat{F}}(g\ga)dm_G(g)=\int_G\chi_{U\hat{F}}
\, dm_G.
\end{align}
By~\eqref{1249} and \eqref{1542} we have $\int_{G/H} \chi_{UH} \,
dm_{G/H} = \int_{G/H} \chi_{UH} 
d\theta$, and this integral is finite and  positive since 
$UH$ is open with compact closure. Thus 
the proportionality  
constant relating the two measures 
must be 1. 
\end{proof}
}
%

For $t>0$, 
let $\chi=\chi_t:\mathscr{V}\to\bR$ be the restriction to $\mathscr{V}$ of the
characteristic function of the ball of radius $t$ around the origin, in $\wedge^k\bR^n$. 
Note that  
$\hat{\chi}(x)=0$ if and only if $x\in \cS^{(n)}_k(t)$ and furthermore,
$\hat{\chi}(x)\ge 1$ if $x\in \Xn\smallsetminus \cS^{(n)}_k(t)$.  
It follows that
\eq{eq: using chi}{m\left(\Xn\smallsetminus
\cS_k^{(n)}(t)\right)\le\int_{\Xn}\widehat{(\chi_t)} dm =\int_{\mathscr{V}}\chi_t
d\theta.
}

Let $V_j$ denote the volume of the Euclidean unit ball in
$\bR^j$ and let $\zeta$ denote the Riemann zeta function.  We will use
an unconventional convention $\zeta(1)=1$, which will make our
formulae simpler. 
For $j \geq 1$, define 
$$
R(j) \df 
 \frac{j^2 V_j}{ \zeta(j)}  
$$
and 
$$B( n,k)\defi \frac{\prod_{j=1}^nR(j)}{\prod_{j=1}^k R(j)\prod_{j=1}^{n-k}R(j)}.$$
The following calculation was carried out in~\cite{Thunder}.
\begin{theorem}[Thunder]{\label{Thunder}}
For $t>0$, we have 
$$\int_{\mathscr{V} } \chi_t \, dm_{G/H}
=B( n,k)\frac{ t^n}{n}.$$ 
\end{theorem} 

We will need to bound $B( n,k)$. 
\begin{lemma}\Name{lem: bound on Bin}
There is $C> 0$ so that for all large enough $n$ and
all $k=1, \ldots, n-1$, 
\eq{eq: bound on Bin}{B( n,k)\leq
  \left(\frac{C}{n}\right)^{\frac{k(n-k)}{2}}.
}
\end{lemma}
\begin{proof}In this proof $c_0, c_1, \ldots$ are constants independent of $n, k, j$. 
Because of the symmetry $B(n,k)=B(n,n-k)$ it is
enough to prove \equ{eq: bound on Bin} with $k\leq \frac{n}{2}.$
Using 
the formula
$V_j=\frac{\pi^{j/2}}{\Ga\left(\frac{j}{2}+1\right)}$ we obtain 
%
\begin{align*}
B(n,k)&=\prod_{j=1}^k\frac{R(n-k+j)}{R(j)}
=\prod_{j=1}^k\frac{\zeta(j)(n-k+j)^2\frac{\pi^{(n-k+j)/2}}{\Ga(\frac{n-k+j}{2}+1)}}
{\zeta(n-k+j)j^2\frac{\pi^{j/2}}{\Ga(\frac{j}{2}+1)}}\\
&=\prod_{j=1}^k \frac{\zeta(j)}{\zeta(n-k+j)}\cdot\pa{\frac{n-k+j}{j}}^2\cdot\pi^{\frac{n-k}{2}}\cdot
\frac{\Ga(\frac{j}{2}+1)}{\Ga(\frac{n-k+j}{2}+1)}.
\end{align*}
Note that $\zeta(s) \geq 1$ is a decreasing function of $s>1$, so
(recalling our convention $\zeta(1)=1$) 
$\frac{\zeta(j)}{\zeta(n-k+j)} \leq c_0 \df \zeta(2)$.
It follows that for all large enough $n$ and 
for any $1\le j\le k, $ 
\eq{eq: estimate first part}{
\frac{\zeta(j)}{\zeta(n-k+j)}\cdot\pa{\frac{n-k+j}{j}}^2\cdot\pi^{\frac{n-k}{2}}\le c_0
n^2\pi^{\frac{n-k}{2}}\le 4^{\frac{n-k}{2}}.
}
According to Stirling's formula, there are positive constants $c_1,
c_2$ such that for all $x \geq 2$, 
$$
c_1 \sqrt{\frac{2\pi}{x}}\left(\frac{x}{e} \right)^x \leq \Gamma(x)
\leq c_2 \sqrt{\frac{2\pi}{x}}\left(\frac{x}{e} \right)^x.
$$
We set $u \df \frac{j}{2}+1$ and $v \df \frac{n-k}{2} $, so that 
$u+v \geq \frac{n-1}{4}$, 
and obtain  
\eq{eq: estimate second part}{
\begin{split}
\frac{\Ga(\frac{j}{2}+1)}{\Ga(\frac{n-k+j}{2}+1)} & =
\frac{\Ga(u)}{\Ga(u+v)} \leq \frac{c_2}{c_1}
\sqrt{\frac{u+v}{u}}\frac{u^u}{(u+v)^{u+v}} \frac{e^{u+v}}{e^u} \\
& \leq c_3 e^v \frac{u^{u-1/2}}{(u+v)^{u+v-1/2}} = c_3
\left(\frac{e}{u+v}\right)^v \frac{1}{\left(1+\frac{v}{u}
  \right)^{u-1/2}}, 
\\
& \leq c_3 \left( \frac{4e}{n-1} \right)^{\frac{n-k}{2}}. 
\end{split}
}
Using \equ{eq: estimate first part} and \equ{eq: estimate
  second part} we obtain
$$
B( n,k) \leq \left[c_3 4^{\frac{n-k}{2}}
  \left(\frac{4e}{n-1}\right)^{\frac{n-k}{2}} \right]^k = \left[ c_3
  \left(\frac{16e}{n-1} \right)^{\frac{n-k}{2}} \right]^k.
$$
So taking $C > 16c_3 e$ 
we obtain \equ{eq: bound on
  Bin} for all large enough $n$. 
\end{proof}
\begin{proof}[Proof of Proposition \ref{prop: strengthening volume}]
Let $C$ be as in Lemma \ref{lem: bound on Bin} and let $C_1>C$. 
Then by \equ{eq: using chi}, \equ{eq: Siegel normalization} and 
Theorem~\ref{Thunder}, for all sufficiently large $n$ we have
\[\begin{split}
m\left(\Xn \smallsetminus \cS^{(n)}_k(t_k) \right) & \leq
B(n,k) \frac{t_k^n}{n} \\
& \leq \frac1n \left(\frac{C}{n} \right)^{\frac{k(n-k)}{2}}
\left(\frac{n}{C_1} \right)^{\frac{k(n-k)}{2}} =  \frac1n \left(\frac{C}{C_1} \right)^{\frac{k(n-k)}{2}}.
\end{split}
\]
Multiplying by $n$ and taking the maximum over $k$ we obtain 
$$
n \, \max_{k=1, \ldots, n}  m\left(\Xn \smallsetminus
  \cS^{(n)}_k(t_k) \right) \leq \left(\frac{C}{C_1}
\right)^{\frac{n-1}{2}} \to_{n\to \infty} 0.
$$
\end{proof}

\section{Closed $A$-orbits and well-rounded lattices} \Name{sec:
  closed orbits} 
It is an immediate consequence of Theorem \ref{thm: main} that any
closed $A$-orbit contains a stable lattice. The purpose of this
section is to show that the same is true for the set of well-rounded
lattices. 
Note that this was proved by McMullen for {\em compact} orbits but
for general  closed orbits, does not follow from his results. Our
proof relies on previous work of Tomanov and the second-named author
\cite{TW}, on \cite{gruber}, and on a covering result (communicated to
the authors by Michael Levin), whose proof is given in the appendix to
this paper. 

\begin{theorem}\Name{thm: closed orbits}
For any $n$, any closed orbit $Ax \subset \Xn$ contains a well-rounded lattice. 
\end{theorem}

We will require the following topological result which generalizes 
Theorem \ref{topological input}.  
Let $s,t$ be
natural numbers, and let $\Delta$ denote the 
$s$-dimensional simplex, which we think of concretely as $\mathrm{conv} (\E_1,
\ldots, \E_{s+1})$. 
We will discuss covers of $M \df \Delta \times \R^t $, and give conditions
guaranteeing that such a cover must cover a point at least $s+t+1$
times. 
For
$j=1, \ldots, s+1$ let $F_j$ be the face 
of $\Delta$ opposite to $\E_j$, that is $F_j = \mathrm{conv} (\E_i: i
\neq j)$. Also let $M_j \df F_j \times  \R^t $ be the corresponding
subset of $M$. 

\begin{theorem}
\Name{thm: covering}
Suppose that  $\cU$ is a cover of $M$ 
by open sets 
satisfying the following conditions:
\begin{enumerate}[(i)]
\item\Name{09301}
For any connected component $U$ of any element of  $\cU$ there exists $j$ 
such that $U \cap M_j = 
\varnothing.$  
\item\Name{09302}
There is $R$ so that for 
any connected component $U$ of the intersection of $k \leq s+t$ distinct 
elements of $\cU$, 
the projection of $U$ to $\R^t$ is $(R, s+t-k)$-almost
affine.
\end{enumerate}
Then
there is a point of $M $ which is covered at least
$s+t+1$ times.

\end{theorem}

Note that hypothesis~\eqref{09302} is trivially satisfied when $k \leq s $,
since any subset of $\R^t$ is $(1, t)$-almost affine. 
Note also that Theorem \ref{topological input} is the case $s=0$ of this
statement. We give the proof of Theorem \ref{thm: covering} in the appendix. 

We will need some preparations in order to deduce Theorem~\ref{thm:
  closed orbits} from Theorem~\ref{thm: covering}. For $1\le d\le n$,
let $$\tb{I}^n_d\defi\set{1\le i_1<\dots<i_d\le n}$$  
denote the collection of multi-indices of length $d$ and 
for $J = (i_1, \ldots, i_d)\in\tb{I}^n_d$ let 
$e_J \df e_{i_1} \wedge \cdots \wedge e_{i_d}.
$
We equip $\bigwedge_1^d\bR^n$ with the inner product with
respect to which $\{e_J\}$ is an orthonormal basis, and denote by $\cE_{d,n}$
the quotient of $\bigwedge_1^d\bR^n$ by the equivalence relation $w 
\sim -w$. Note that the product of an element of $\cE_{d,n}$ with a
positive scalar is well-defined. We will (somewhat imprecisely) refer to elements of $\cE_{d,n}$
as vectors. Given a subspace $L \subset \bR^n$  with $\dim L= d$,
we denote by 
  $w_L\in \cE_{d,n}$ the image of a vector of norm one in
 $ \bigwedge_1^d L.$
If $\Lam \subset \bR^n$ is a discrete subgroup of rank $d$, we
  denote by $w_\Lam\in \cE_{d,n}$ 
the image of the vector 
$v_1\wedge\dots\wedge v_d,$ where $\set{v_i}_1^d$ forms a basis for
$\Lam$. The reader may verify that these vectors are well-defined and
satisfy $w_{\Lam} = |\Lam| w_L$ where $L = \spa \Lam$.  
We denote the natural action of $ G$ on $\cE_{d,n}$ arising from the
$d$-th exterior power of the linear action on $\bR^n$, by $(g, w)
\mapsto gw$. 
Given a subspace $L \subset \bR^n$ and a discrete subgroup $\Lam$ we set 
$$A_L\defi\set{a\in A:
    aw_L=w_L} \text{ and } A_\Lam \df \{a \in A: aw_\Lam = w_\Lam\}.$$
Note that the requirement  
$aw_L=w_L$ is equivalent to saying that $aL=L$ and $\det(a|_L)=1$.
Given a flag 
\begin{equation}\label{flag}
\crly{F}=\set{ 0 \varsubsetneq L_1\varsubsetneq\dots\varsubsetneq L_k\varsubsetneq \bR^n}
\end{equation} 
(not necessarily full), let 
$A_{\crly{F}}\defi \bigcap_i A_{L_i}.$
The {\em support} of an element $w \in \cE_{d,n}$ is the subset of
$\tb{I}^n_d$ for which the corresponding coefficients of an element of
$\bigwedge^d\bR^n$ representing $w$ are nonzero, and we write 
$\on{supp}(L)$ or $\on{supp}(\Lam)$ for the supports of $w_L$ and
$w_{\Lam}$. For $J = \set{i_1<\dots<i_d } \in \tb{I}^n_d$, set $\bR^J
\df \spa (e_{i_j})$ and define the multiplicative characters 
$$ \chi_J: A \to \R^*, \ \chi_J(a) \defi \det
(a|_{\bR^J}).$$
Then 
for any subspace $L\subset \bR^n$, 
\eq{1636}{A_L=\bigcap_{J \in \supp (L)} \ker \chi_J}
(and similarly for discrete subgroups $\Lam$). 
As in \S \ref{establishing
  topological input} we fix an invariant metric on $A$. In order to
verify hypothesis \eqref{09302} of Theorem \ref{thm: covering}, we will need the following
lemmas (cf. \cite[Theorem 6.1]{McMullenMinkowski}):
\begin{lemma}\label{bdd dist from stab}
Let $T \subset A$ be a closed subgroup and let $x\in\cL_n$ be a
lattice with a
compact $T$-orbit. Then for any $C>0$ there exists  
$R>0$ 
such that for any collection $\set{\Lam_i}$ of subgroups of $x$, there exists $b\in A$ such that 
\begin{equation}\label{2052}
\left \{a\in T:\forall i\; \norm{aw_{\Lam_i}}\le C \right\} \subset R
\text{-neighborhood of } b\pa{\bigcap_iA_{\Lam_i}}.
\end{equation}
\end{lemma}
\begin{proof}
We will identify $A$ with its Lie algebra $\mathfrak{a}$ via the
exponential map, and think of the subgroups $A_\Lam$ as
subspaces. 
By \equ{1636} only finitely many subspaces arise as $A_\Lam$.
In particular, given a collection of discrete subgroups
$\set{\Lam_i}$, the angles between the spaces they span (if nonzero) are bounded
below. Therefore
there exists a function $\psi:\bR\to\bR$ with $\psi(R)\to_{R\to\infty}\infty$, such that   
\begin{align}\label{336}
&\set{a\in A: \forall\; J\in\cup_i \on{supp}(w_{\Lam_i}),\; \psi(R)^{-1}\le \chi_J(a)\le \psi(R)}\subset\\
\nonumber &\set{a\in A: d(a,\cap_iA_{\Lam_i})\le R}.
\end{align}

Since $Tx$ is compact,  there exists a compact subset
$\Om\subset T$ 
such that for any $a\in T$ there exists $b = b(a) \in T$ satisfying $bx=x$ and $b^{-1}a\in\Om$.
It follows that there exists $M \geq 1$ such that:
\begin{enumerate}[(I)]
\item\label{1708} for any subspace $L$, $||bw_L||\le M||aw_L||$. 
\item\label{1747} for any multi-index $J$, $\chi_J(ba^{-1})\le M$.
\end{enumerate}

Given $C>0$, let $C' \df MC$ and consider the finite set 
$$\crly{S}\defi \set{\Lam \subset x: ||w_{\Lam}||\le C'}.$$
For any $\Lam\in\crly{S}$ write 
$w_{\Lam}=\sum_{J\in\on{supp}(w_{\Lam})} \al_J(\Lam) e_J.$ 
Let $\vre>0$ be small enough so that
\begin{align*}
\vre&<\min\set{\av{\al_J(\Lam)}:\Lam\in\crly{S}, J\in\on{supp}(w_{\Lam})},
\end{align*}
and choose $R$ large enough so that $\psi(R)>C'/\vre$. We claim that 
for any $\set{\Lam_i}\subset \crly{S}$,
\begin{equation}\label{2053}
\set{a\in T:\forall i\; \norm{aw_{\Lam_i}}\le C}\subset \set{a\in T: d(a, \cap_i A_{\Lam_i})\le R}.
\end{equation}
To prove this claim, suppose $a$ is an element on the left hand side
of~\eqref{2053}. 
By~\eqref{336}
it is enough show that for any $J\in\cup_i\on{supp}(\Lam_i)$ we have
$\psi(R)^{-1}\le \chi_J(a)\le\psi(R)$.  
Since the coefficient of $e_J$ in the
expansion of $aw_{\Lam_i}$ is $\chi_J(a)\al_J(\Lam_i)$ and since 
$||aw_{\Lam_i}||\le C$, we have 
$$\chi_J(a)\le
\frac{C}{|\al_J(\Lam_i)|}\le\frac{C}{\vre}\le \psi(R).$$ 
On the other hand, letting $b = b(a)$ 
we have $b\Lam_i\in\crly{S}$ from \eqref{1708}, and 
\begin{align*}
 \vre\le |\al_J(b\Lam_i)| & =\chi_J(b) |\al_J(\Lam_i)| \ \Longrightarrow \ \chi_J(b^{-1})\le C/\vre \\
&\overset{\textrm{\eqref{1747}}}{\Longrightarrow} \ \chi_J(a^{-1})=\chi_J(a^{-1}b)\chi_J(b^{-1})\le C'/\vre\le\psi(R),
\end{align*}
which completes the proof of \eqref{2053}.

Let $\set{\Lam_i}$ be any collection of subgroups of $x$
and assume that the set on the left hand side of ~\eqref{2052} is non-empty. That is, there 
exists $a_0\in T$ such that for all $i$, $||a_0w_{\Lam_i}||\le C$. 
Let $b = b(a_0)\in T$, and set $\Lam'_i \df b\Lam_i$. It follows that $\set{\Lam'_i}\subset\crly{S}$ 
and so 
\begin{align*}
\set{a\in T:\forall i\norm{aw_{\Lam_i}}\le C} 
&=b\set{a\in T: \forall i \norm{aw_{\Lam'_i}}\le C}\\
&\stackrel{\eqref{2053}}{\subset} b \set{a\in T: d(a,\cap_i A_{\Lam_i'})\le R}\\
&= \set{a\in T: d(a, b\pa{\cap_i A_{\Lam_i}})\le R},
\end{align*}
where in the last equality we used the fact that
$A_{\Lam_i'}=A_{\Lam_i}$ because $A$ is commutative.
\end{proof}

\begin{lemma}\Name{lem: flag}
Let $\crly{F}$ be a flag as in \eqref{flag} and let
$A_{\crly{F}}$ be its stabilizer. Then $A_{\crly{F}}$ is of
co-dimension  
$\ge k$ in $A$.
\end{lemma}
\begin{proof}
Given a nested sequence of multi-indices
$J_1\varsubsetneq\dots\varsubsetneq J_k$ it is clear that the subgroup
$$
\bigcap_{i=1}^k \ker \chi_{J_i}
$$
is of co-dimension $k$ in $A$. In light of \eqref{1636},
it suffices to prove the following claim: 
\quad\\
\noindent \textit{Let $\crly{F}$ be a flag as
  in~\eqref{flag} with $d_i\defi \dim L_i$. Then there is a nested
  sequence 
of multi-indices $J_i\in\tb{I}^n_{d_i}$ such that $J_i\in\on{supp}(L_i)$.}
\quad\\


In proving the claim we will assume with no loss of generality that
the flag is complete. Let $v_1,\dots, v_n$ be a basis of $\bR^n$ such
that  $L_i=\on{span}\set{v_j}_{j=1}^i$ for 
$i=1,\dots, n-1.$  
Let $T$ be the $n\times n$ matrix whose columns are $v_1, \dots, v_n$.
Given a multi-index $J$ of length $\av{J}$, we denote by $T_J$
the square matrix of dimension $\av{J}$ obtained  
from $T$ by deleting the last $n-\av{J}$ columns and the rows
corresponding to the indices not in $J$.  Note that with this
notation, possibly after replacing some of the $v_i$'s by their scalar
multiples, each $w_{L_d}$ is the image in
$\cE_{d,n}$ of 
\begin{equation}\label{1159}
v_1\wedge\dots\wedge v_d=\sum_{J\in \tb{I}^n_d}  (\det T_J) e_J.
\end{equation}
In particular,  $J\in\on{supp}(L_d)$ if and only if $\det T_J\neq 0$.

Proceeding inductively in reverse, we construct the nested sequence
$J_d$ by induction on $d =n, \ldots, 1$. Let $J_n=\set{1,\dots, n}$ so that
$T=T_{J_n}$.  
Suppose we are given  multi-indices
$J_{n}\supset\dots\supset J_{d+1}$ such that  
$J_i\in\on{supp}(w_{L_i})$ 
for $i=n,\dots, d+1$. We want to define now a multi index
$J_d\in\on{supp}(w_{L_d})$ which is  
contained in $J_{d+1}$. By~\eqref{1159}, $\det T_{J_{d+1}}\neq 0$. When
computing $\det T_{J_{d+1}}$ by expanding the last column we express
$\det T_{j_{d+1}}$ 
as a linear combination of $\set{\det T_J:J\subset J_{d+1},
  \av{J}=d}$. We
conclude that there must exist at least one multi-index $J_d\subset
J_{d+1}$  for which $\det T_{J_d}\ne 0$. In turn, by~\eqref{1159} 
this means that $J_d\in\on{supp}(w_{L_d})$. This finishes the proof
of the claim. 
\end{proof}

The following notation is analogous to Definition \ref{bn}.
Given a lattice $x\in \Xn$ and $\del>0$ let 
\begin{align*}
\on{Min}^*_{\del}(x)&\defi\set{v \in x \sm \{0\} : \|v\|<(1+\del)\al_1(x)}.\\
\tb{V}^*_{\del}(x)&\defi\on{span}\on{Min}^*_{\del}(x).\\
\dim^*_\del(x)&\defi\dim\tb{V}^*_{\del}(x).
\end{align*}
Finally, for $\vre>0$, let $\cU^{(\vre)}=\set{U_j^{(\vre)}}_{j=1}^n$
be the collection of open subsets of $A$ defined by  
\eq{eq: cover}{ U_j = U_j^{(\vre)} \df \{a \in A: \text{for all }
  \delta \text{ in a neighborhood of }
j\vre, \, \dim^*_{\del} (ax) =j \}. }

Similarly to the discussion in Lemma \ref{lem: positive inradius} we see that 
$\cU^{(\vre)}$ is an open cover of $A$.
\begin{proof}[Proof of Theorem \ref{thm: closed orbits}]  
The strategy of proof is very similar to that of Theorem~\ref{thm: main}.  We consider
covers $\cU^{(\varepsilon)}$ of $A$  and use Theorem~\ref{thm: covering}
to deduce that $U_n^{(\varepsilon)}$ is non-empty. 
The first step towards applying Theorem~\ref{thm: covering} is to find a decomposition
 $A\simeq\bR^{n-1}=\bR^s\times\bR^t$ 
and a simplex $\Del\subset \bR^s$, so that the restriction of the cover to
$\Del\times\bR^t$ satisfies the two hypotheses of Theorem~\ref{thm: covering}.

According to \cite{TW, gruber}, there is a
decomposition $A = T_1
\times T_2$ and a direct sum decomposition $\R^n = \bigoplus_1^{d} V_i$
such that the following hold:
\begin{itemize}
\item
Each $V_i$ is spanned by some of the standard basis vectors. 
\item
$T_1$ is the group of linear transformations 
which act on each $V_i$ by a homothety, preserving Lebesgue measure on
$\R^n$. In particular $s \df \dim T_1 = d-1$. 
\item
$T_2$  is the group of diagonal (with respect to the standard basis)
matrices whose restriction to each $V_i$ has determinant 1.  
\item $T_2 x$ is compact and $T_1 x$ is divergent;
  i.e. $Ax \cong T_1 \times T_2/(T_2)_x$, where
  $(T_2)_x \df \{a \in T_2: ax= x\}$. 
\item 
Setting $\Lambda_i \df V_i \cap x$, each $\Lambda_i$ is a
lattice in $V_i$, so that $\bigoplus \Lambda_i$ is of finite index in
$x$. 
\end{itemize}

For $a \in T_1$ we write $\chi_i(a)$ for the number
satisfying $av = e^{\chi_i(a)}v$ for all $v \in V_i$. Thus each $\chi_i$
is a homomorphism from $T_1$ to the additive group of real
numbers. The mapping $a \mapsto \bigoplus_i \chi_i(a)
\mathrm{Id}_{V_i}$, where $\mathrm{Id}_{V_i}$ is the identity map on
$V_i$,  is nothing but the logarithmic map of $T_1$ and it endows
$T_1$ with the structure of a vector space. In particular we can discuss the
convex hull of subsets of $T_1$. 
For each $\rho$ we let
$$\Delta_\rho \df \{a \in T_1: \max_i \chi_i(a) \leq \rho\}.$$  
Then
$\Delta_\rho = \conv (b_1, \ldots, b_d)$ where $b_i$ is the diagonal
matrix acting on each $V_j, j \neq i$ by multiplication by $e^\rho$, and
contracting $V_i$ by the appropriate constant ensuring that $\det b_i
=1$. 

Let $P_i : \R^n \to 
V_i$ be the natural projection associated with the decomposition $\R^n =
\bigoplus V_i$. Each $P_i(x)$ is of finite index in $\Lambda_i$ and
hence discrete in $V_i$. Moreover, the orbit $T_2 x$ is compact, so
for each $a \in T_2$ there is $a'$ belonging to a bounded subset of $T_2$
such that $ax=a'x$. This implies that there is
$\eta>0 $ such that for any $i$ and any $a \in T_2$, if $v \in ax$ and $P_i(v) \neq 0$
then $\| P_i(v)\| \geq \eta$. Let $C>0$ be large enough so that
$\alpha_1(x') \leq C$ for any $x' \in \Xn$.  Let
$\rho$ be large enough so that 
\eq{eq: choice of R}{e^\rho\eta > 
2C. }
We restrict the covers $\mathcal{U}^{(\vre)}$
(where 
$\vre \in (0,1/n)$) to $\Delta_\rho \times
T_2$ and apply Theorem \ref{thm: covering} with $t \df \dim T_2 = n-d$. 
 If we show that the 
  hypotheses of Theorem \ref{thm: covering} are 
  satisfied for each
  cover $\mathcal{U}^{(\vre)},$ we will obtain $U_n^{(\vre)} \neq \varnothing.$ Then, taking $\vre_j
  \to 0$ and applying a
  compactness argument, we find a well-rounded lattice in 
  $(\Delta_\rho \times T_2)x$. 

 Let   $U$ be a connected  subset of 
  $U_k^{(\vre)} \in \cU^{(\vre)}$. Repeating the arguments proving Lemma
  \ref{flat things}, or
  appealing to \cite[\S7]{McMullenMinkowski}, we see that the
  $k$-dimensional subspace 
  $L\defi a^{-1}\tb{V}^*_{k\vre} (ax)$ 
  as well as the discrete subgroup $\Lam\defi L\cap x$
  are independent of the choice of $a \in U$. 
  By definition of $U_k^{(\vre)}$, for any $a \in U$, $a \Lam$ contains $k$ vectors $v_i = v_i(a),
  i=1, \ldots,
  k$ which span $aL$
  and satisfy \eq{eq: vi satisfy}{
\|v_i\| \in [r, (1+k\vre)r ], \ \ \text{where \ } r \df  \alpha_1(ax).
 } 
 
In order to 
verify hypothesis~\eqref{09301} of Theorem \ref{thm:
    covering}, we need to show that there 
is at least one $j$ for which $U \cap M_j  = 
\varnothing$. Let $P_1, \ldots, P_d$ be the projections above. Since 
 $\ker P_1 \cap \cdots \cap \ker P_d = \{0\}$ and $\dim L = k \geq 1$, it suffices to show that
 whenever $U \cap M_j \neq \varnothing$, $L \subset \ker P_j$. 
The face $F_j$ of $\Delta_\rho$ consists of those elements $a_1 \in T_1$
which expand vectors in $V_j$ by a factor of 
$e^\rho$. If $U \cap M_j \neq \varnothing$ then there is $a \in T_2, a_1
\in F_j$ so that $a_1a \in U$. Now \equ{eq: choice of R}, \equ{eq: vi
  satisfy} and the choice of $\eta$ and $C$  ensure that 
the vectors $v_i = v_i(a_1a)$ satisfy $P_j(v_i)=0$. Therefore $L \subset
\ker P_j$.  

It remains to 
verify hypothesis~\eqref{09302} of Theorem \ref{thm: covering}. 
Let $U$ be a connected subset of an intersection $U_{i_1}\cap\dots\cap
U_{i_k}\cap(\Del_\rho\times T_2)$ and let  
$L_{i_j}\defi a^{-1}\tb{V}^*_{i_j\vre} (ax)$ and $\Lam_{i_j}\defi L_{i_j}\cap x$. 
As remarked above, $L_{i_j},\Lam_{i_j}$ are independent of $a\in U$.

By the definition of the $L_{i_j}$'s we have that $L_{i_j}\varsubsetneq L_{i_{j+1}}$ and so they form 
a flag $\crly{F}$ as in~\eqref{flag}. Lemma~\ref{lem: flag} applies and we deduce that 
\begin{equation}\label{1640}
A_{\crly{F}}=\cap_{j=1}^k A_{L_{i_j}} \textrm{ is of co-dimension}\ge k\textrm{ in }A.
\end{equation} 
For each $a\in U$ and each $j$ let $\set{v^{(j)}_\ell(a)}\in a\Lam_{i_j}$ be the vectors spanning 
$aL_{i_j}$ which satisfy~\eqref{eq: vi satisfy}. Let 
$u^{(j)}_\ell(a)\defi a^{-1} v^{(j)}_\ell\in\Lam_{i_j}$. Observe that:
\begin{enumerate}[(a)]
\item\label{02281} $\on{span}_{\bZ}\set{u^{(j)}_\ell(a)}$ is of finite
  index in $\Lam_{i_j}$ and in particular, 
$u^{(j)}_1(a)\wedge\dots\wedge u^{(j)}_{i_j}(a)$ is an integer
multiple of $\pm w_{\Lam_{i_j}}$. As a consequence $||aw_{\Lam_{i_j}}||\le
||v^{(j)}_1(a)\wedge\dots\wedge v^{(j)}_{i_j}(a)||$. 
\item\label{02282} Because of~\eqref{eq: vi satisfy} we have that $ ||v^{(j)}_1(a)\wedge\dots\wedge v^{(j)}_{i_j}(a)||< C$ for some constant depending on $n$ alone.
\end{enumerate}
It follows from~\eqref{02281},\eqref{02282} and Lemma~\ref{bdd dist from stab}
that there exist $R>0$ and an element $b\in T_2$ so that
$$U\subset \Del_\rho \times \set{a\in T_2:\forall i_j,
  ||aw_{\Lam_{i_j}}||<C}\subset T_1\times \set{a\in T_2:
  d(a,bA_{\crly{F}})\le R}.$$ 
By~\eqref{1640} we deduce that 
if $p_2 : A \to T_2$ is the projection 
associated with the
decomposition $A = T_1 \times T_2$ then $p_2(U)$ is $(R',s+t-k)$-almost afine, where $R'$ depends only on $R,\rho$. This concludes the proof. 
\end{proof}

\appendix
\section{Proof of Theorem \ref{thm: covering}}
\Name{appendix: Levin}
Below $X$ will denote a second countable metric space.
We will use calligraphic letters like $\mathcal{U}$ for collections of
sets. The symbol 
 $\mesh ( \mathcal {A})$ will 
 denote the supremum of the diameters of the sets in
 $\mathcal A$.
The symbol $\Lb (\mathcal{A})$ will denote
 the Lebesgue number of a cover $\mathcal A$, i.e. the supremum of all
 numbers $r$ such that each ball of radius $r$ in $X$ is contained in
 some element of $\cA$. The symbol
 $\order(\mathcal{A})$ will denote the  
 largest number of distinct elements of $\mathcal A$ with non-empty
 intersection. 
\begin{definition}\Name{def: asdim}
 A collection $\{X_j\}_{j \in \crly{J}}$ of subsets of $X$ is said to be {\em uniformly of 
 asymptotic dimension $\leq n$}  if for every $r>0$ 
 there is $R >0$ such that for every $j \in \crly{J}$ there is 
  an open cover ${\mathcal X}_j $  of $X_j$ such that
\begin{itemize}
\item  $\mesh ( {\mathcal X}_j) \leq R$.
\item $\Lb ({\mathcal X }_j)> r$. 
\item$\order( {\mathcal X}_j)\leq n+1$. 
\end{itemize}

As an abbreviation we will sometimes write `asdim' in place of `asymptotic
dimension'. 
\end{definition}

Recall that a cover of $X$ is {\em locally finite} if every $x \in X$
has a neighborhood which intersects finitely many sets in the cover. 
We call the intersection of $k$ distinct elements of $\cA$ a {\em
  $k$-intersection,} and denote the union of all $k$-intersections by 
 $[{\mathcal A}]^{k}$. We will need the following two Propositions for
 the proof of Theorem~\ref{thm: covering}. We first prove  
 Theorem~\ref{thm: covering} assuming them and then turn to their proof.

\begin{proposition}
\Name{p2}
Let $\mathcal A$ be a locally finite open cover of a space $X$
such that $\order ({\mathcal A}) \leq m$ and the collection of components of
the  $k$-intersections 
of 
$\mathcal A$, $1 \leq k \leq m$,  is uniformly  of $\asdim \leq m-k$.
Then $\mathcal A$ can be refined by a uniformly bounded 
open cover of order at most  $m$.
\end{proposition}

 \begin{proposition}
 \Name{p3}
 Let $\Delta_1$ and $\Delta_2$ be simplices, $X=\Delta_1 \times \Delta_2$,
 $p_i : X \to \Delta_i$ the projections and 
  $\mathcal A$  a  finite open cover of $X$ such that
  for every $A \in \mathcal A$ and $i=1,2$ the set $p_i(A)$ does 
  not meet at least one of the faces of $\Delta_i$.
  Then $\order ({\mathcal A}) \geq \dim \Delta_1 + \dim \Delta_2 +1 $.
  \end{proposition}
 

 \begin{proof}[Proof of Theorem \ref{thm: covering}.] 
Let $m \df \dim M = s+t$, and suppose by contradiction that 
 $\order ({\mathcal U}) \leq  m$. Since every cover of $M$ has a
 locally finite refinement, there is no loss of generality in assuming
 that $\cU$ is locally finite. Replacing $\cU$
   with the set of connected components of elements of $\cU$, we may 
   assume that all elements of $\cU$ are connected. For any $r_0$, and
   any bounded set $Y$, the
   product space  $Y \times \R^d$ can be covered by a cover of order $d+1$ and
   Lebesgue number greater than $r_0$. Hence our hypothesis (ii) implies that for each
 $k =1, \ldots, m$, the collection of connected components of intersections of $k$
 distinct elements of $\cU$ is
 uniformly of
 asymptotic dimension at most $m-k$. 
Therefore we can apply Proposition
 \ref{p2} 
to assume that  $\mathcal U$ 
  is  uniformly bounded and of order at most $m$. 
  Take a sufficiently large $t$-dimensional simplex  $\Delta_1 \subset \R^t$ so that 
   the projection of every set  in $\mathcal U$  does not
  intersect at least one of the faces of $\Delta_1$. 
We obtain a contradiction to 
   Proposition \ref{p3}.
\end{proof}
For the proofs of Propositions \ref{p2}, \ref{p3} we will need some auxiliary
lemmas. 
\begin{lemma}\Name{lem: for p2}
Let $\{G_i: i \in \cI\}$ be a locally finite collection of open subsets
of a metric space $X$,
and let $Z$ be an open subset such that for each $i \neq j$, $G_i
\cap G_j \subset Z$. Then there are disjoint open subsets
$\tb{E}_i, i \in \crly{I},$ such that for any $i$
\eq{eq: inclusion}{G_i \sm Z \subset \tb{E}_i
\subset G_i.}   
\end{lemma}

\begin{proof}
For each $G_i$ and 
$x \in G_i \cap \partial
\, (G_i \sm Z)$ 
set   
$$
r_x \df \frac{1}{3} \inf_{j \neq i} d\left(x, \partial
\, \left(G_j  \sm Z\right) \right),
$$
where $d$ is the metric on $X$. The infimum in this
definition is in fact a minimum since $\{G_i\}$ is locally
finite. To see that it
is positive, suppose if possible that  $y_\ell \to x$
for a
sequence $(y_\ell) \subset \partial ( G_j \sm Z)$. 
Then there are $\tilde{y}_\ell \in G_j \sm Z$ with $d(y_\ell, \tilde{y}_\ell) \to 0$ so that
$\tilde{y}_\ell \to x$.  Since $G_i$ is open, for large enough $\ell$ we have
$\tilde{y}_\ell \in G_i$, contradicting the assumption that $G_i \cap G_j
\subset Z$. 
Now we set 
$$
\tb{E}_i \df \tb{E}' \cup \tb{E}'', \ \ \text{where \ \ }
\tb{E}' \df G_i \sm Z \ \ \text{and \ } \tb{E}'' \df G_i \cap
\bigcup_{x \in  G_i \cap \partial
\, (G_i \sm Z)} B(x,r_x).
$$
Clearly each $\tb{E}_i$ satisfies \equ{eq: inclusion}, and it is
open since $\tb{E}''$ is open and covers the boundary points of
$\tb{E}'$. To show that the sets $\tb{E}_i$ are 
disjoint, suppose if possible that $z \in \tb{E}_i \cap
\tb{E}_j$. Then there are $x \in G_i, y \in G_j$ such that $z \in
B(x, r_x) \cap B(y, r_y)$. Supposing with no loss of generality that
$r_x \geq r_y$ we find that  
$$d(x,y) \leq d(x,z)+d(z,y) \leq 2r_x \leq \frac23 d \left(
  x, \partial \left(G_j \sm Z \right)\right),
$$
which is impossible. 
\end{proof}
We denote the nerve of a cover $\cA$ by 
  $\nerve (\mathcal{ A}),$ and consider it with the metric topology
  induced by barycentric coordinates.
  Given a partitition of unity subordinate 
  to a cover $\mathcal{A}$ of $X$, there is a standard construction of
  a map $X \to \nerve(\cA)$; such a
  map is called a {\em canonical map}. 

 \begin{lemma}
\Name{p1}
Let a space $Y$ be the union of two open subsets $\mathbf{D}$ and $\mathbf{E}$, and
let $\mathcal D$ and $\mathcal E$ be open covers of $\mathbf{D}$ and
$\mathbf{E}$ respectively, 
with bounded $\mesh$ and $\order$, and such that if $C\subset \mathbf{D}\cap\mathbf{E}$ is a connected subset 
contained in an element of $\cD$, then it is contained in an element of $\cE$.
Then, there is an open cover $\cY$ of $Y$ such that:
\begin{enumerate}
\item The cover $\mathcal{Y}$ refines
$\cD\cup \cE$.
\item $\mesh ({\mathcal Y}) \leq 
\max \left( \mesh(\cD), \mesh(\cE) \right ).$ 
\item $\order ({\mathcal Y}) \leq \max \left ( \order ({\mathcal D})+1, \order
\left({\mathcal E}\right) \right )$.
\end{enumerate}
\end{lemma}

\begin{proof} 
Let 
$\order ({\mathcal D})=n+1$, let $X \df \nerve ({\mathcal D})$, and  let $\pi : \mathbf{D} \to X$
be a canonical map. Take an open cover of ${\mathcal X}$ of $X$ such that
$\order ({\mathcal X})\leq n+1$ and $\pi^{-1}({\mathcal X})$ refines ${\mathcal D}$.
Let $f : Y \to [0,1]$ be a continuous map such that
$f|_{Y \smallsetminus \mathbf{E}} \equiv 0$ and $f|_{Y\smallsetminus
  \mathbf{D}} \equiv 1$. Set
 $Y' \df f^{-1} \left( \left[\frac{1}{3},\frac{2}{3} \right ]\right )$
and 
$$g : Y' \to Z \df X\times \left [\frac{1}{2},\frac{2}{3} \right], \ \ \
g(y) \df (\pi(y), f(y)).$$
Since $\dim Z \leq n+1$ there is an open cover $\mathcal Z$ of $Z$
such that 
$\order ({\mathcal Z}) \leq n+2$,
the projection of  $\mathcal Z$ to $X$ refines ${\mathcal X}$
and the projection of $\mathcal Z$ to 
$\left[\frac{1}{2},\frac{2}{3} \right]$ is of $\mesh < 1/3$.
Let ${\mathcal Y}'$ denote the collection of connected components of
sets $\{g^{-1}(W): W \in \mathcal{Z}\}$. By construction $\cY'$
refines $\mathcal D$. Also, since the sets in $\cY'$ are connected and contained in
$\mathbf{D}\cap \mathbf{E}$ the assumption of the Lemma implies that $\cY'$ also refines $\cE$. 
Moreover  $\order ({\mathcal Y}')\leq n+2$
and no element of ${\mathcal Y}'$ meets both $f^{-1} \left(\frac{1}{3}
  \right)$ and
$f^{-1}\left(\frac{2}{3} \right)$.
For every $\Omega \in {\mathcal Y}'$ 
which intersects $f^{-1} \left(\frac{1}{3} \right)$, there is an
element $D \in \cD$ such that $\Omega \subset D$. We choose one such
$D$ and say that {\em $D$ marks $\Omega$}. Similarly if $\Omega$
intersects $f^{-1} \left( \frac{2}{3} \right)$ there is $E \in
\cE$ so that $\Omega \subset E$, we choose one such $E$ and say
that {\em $E$ marks $\Omega$}. We now modify elements of $\cD$ and
$\cE$: for each element $ D \in \mathcal D$, define 
$$
\tilde{D} \df \left( D \cap f^{-1} \left ( \left[0,1/3\right) \right) \right)\cup \bigcup_{D\text{ marks } \Omega} \Omega.
$$
Similarly we modify elements of $\cE$, defining 
$$
\tilde{E} \df \left( E \cap f^{-1} \left( \left( 2/3,1 \right]
  \right) \right) \cup \bigcup_{E \text{ marks } \Omega} \Omega.
$$
We refer to $\tilde{D}, \tilde{E}$ as {\em modified elements} of $\cD, \cE$. Finally define
$\mathcal Y$ as the collection of modified elements of 
${\mathcal D}$ and $\cE$ and the elements of ${\mathcal Y}'$
which do not meet 
$f^{-1} \left( \frac13 \right) $ or $ f^{-1} \left( \frac23 \right)
$. 
It is easy to see that $\mathcal X$ has the required properties. 
\end{proof}

\begin{lemma}\Name{lemma0036}
Let $Y$ be a metric space and let $\mathbf{D},\mathbf{E}_i, i\in
\crly{I}$ be open subsets which cover $Y$. Assume 
that the $\mathbf{E}_i$'s are disjoint, connected, and are uniformly
of $\on{asdim} \le \ell -1$.
Let $\cD$ be an open cover of $\mathbf{D}$ which is of bounded mesh
and $\on{ord}\cD\le \ell$. 
Then $Y$ has an open cover $\cY$ which refines the cover
$\cD\cup\set{\mathbf{E}_i:i\in\crly{I}}$, 
is of bounded
mesh and $\on{ord}\cY \le \ell+1$.
\end{lemma}
\begin{proof}
Using the assumption that $\mathbf{E}_i$ is uniformly of  $\on{asdim} \le \ell-1$ we find 
an open cover $\cE_i$ of $\mathbf{E}_i$ which is of uniformly bounded
mesh, such that $\on{ord}\cE_i\le \ell$ and
$\on{Leb} \cE_i> \on{mesh} \cD$. 
Let
$\mathbf{E}\defi\bigcup_{\crly{I}} \mathbf{E}_i$ and let  
$$\cE\defi\bigcup_{i \in \crly{I}}
\{\mathbf{E}_i \cap U: U \in \cE_i\}.
$$

Clearly it suffices to verify that the hypotheses of 
Lemma~\ref{p1}
are satisfied. Indeed, 
by assumption the cover $\cD$ is of bounded $\mesh$ and order, and 
$\cE$ is of bounded mesh because of the
uniform bound on $\mesh(\cE_i)$. We  also have that
$\order \cE\le \ell+1$ because of the bounds $\order \cE_i\le\ell+1$ 
and the fact that the $\mathbf{E}_i$ are disjoint. 
For 
the last condition, 
let a connected subset
$C\subset \mathbf{D}\cap\mathbf{E}$ which is contained in an element  
of $\cD$ be given. By the connectedness and disjointness of the
$\mathbf{E}_i$'s we conclude that there exists $i$ with  
$C\subset \mathbf{E}_i$. Because $\on{Leb} \cE_i>\mesh \cD$ we deduce
that  since $C$ is contained in an element of $\cD$ it must be  
contained in an element of $\cE_i$ and in turn, as it is contained in
$\mathbf{E}_i$, it must be contained in an element of  
$\cE$. 
%
\end{proof}

 \begin{proof}[Proof of Proposition \ref{p2}.] 
Proceeding inductively
in reverse order, for $k=m, \ldots, 1$ 
 we will construct 
 a uniformly bounded open cover ${\mathcal A}^k $ of $[{\mathcal
   A}]^k$ 
 such that
 $\order ({\mathcal A}^k) \leq m+1-k$ and 
  ${\mathcal A}^k$ refines the restriction of $\mathcal A$ to
  $[{\mathcal A}]^k$. The construction is  
 obvious for $k =m$. Namely, our hypothesis and Definition \ref{def:
   asdim} with $n=m-k=0$ mean that 
$[\mathcal{A}]^m$ has a cover of bounded mesh and  order 1, that is,
we can just  set ${\mathcal A}^{m}$ to be
 the connected components of $[{\mathcal A}]^{m}$. 
Assume that the construction
 is completed for $k+1$ and proceed to $k$ as follows. First notice
 that for two distinct $k$-intersections $A$ and $A'$ of $\mathcal A$
 the complements $A\smallsetminus [{\mathcal A}]^{k+1}$
 and $A' \smallsetminus [{\mathcal A}]^{k+1}$ are disjoint.
By Lemma \ref{lem: for p2}, 
we can cover $[{\mathcal A}]^k\smallsetminus[{\mathcal A}]^{k+1}$ by 
a collection $\set{\mathbf{E}_i:i\in\crly{I}}$ of disjoint connected open sets such that every
$\mathbf{E}_i$ is contained in a $k$-intersection of $\mathcal A$. In particular, the collection
$\set{\mathbf{E}_i:i\in\crly{I}}$ is uniformly of asdim $\le m-k$. We can therefore apply Lemma~\ref{lemma0036}
with the choices $Y=\br{\cA}^k, \mathbf{D}=\br{\cA}^{k+1}, \cD=\cA^{k+1}$, the collection $\set{\mathbf{E}_i:i\in\crly{I}}$, and
$\ell=m-k$, and obtain an open cover $\cY$ of $\br{\cA}^k$ of order $\le m-k+1$ that refines $\cD\cup\set{\mathbf{E}_i:i\in\crly{I}}$ and 
in particular, refines $\cA|_Y$. This completes the inductive step.

\end{proof}

\begin{proof}[Proofs of Proposition \ref{p3}.] 
For every $A \in \mathcal A$ choose
 a vertex $v^A_i$ of $\Delta_i$ so that $p_i(A)$ does not intersect 
 the face of $\Delta_i$ opposite to $v^A_i$. Let $Y \df \nerve(\cA)$
 and let $f : X \to X$ be the
 composition of 
a canonical map  $ X \to Y$ and a map 
$Y\to X$
which is  linear on each simplex of $Y$
and sends the vertex of $Y$ related to $A \in \mathcal A$
to the point $(v_1^A, v_2^A) \in X$. 
Take a point $x \in \partial \Delta_1 \times \Delta_2$. Then 
$p_1(x)$ belongs to a face $\Delta'_1$ of $\Delta_1$ and hence
for every $A \in \mathcal A$ containing $x$ we have that $v^A_1 \in \Delta'_1$.
Thus both $x$ and $f(x)$ belong to $\Delta'_1 \times \Delta_2$.
 Applying the same argument to
 $\Delta_1 \times \partial \Delta_2$ we get that
 the boundary $\partial X$ is invariant under $f$ and $f$ restricted
 to $\partial X$ is homotopic to the identity map of $\partial X$.
 If $\order ( {\mathcal A}) \leq \dim \Delta_1 + \dim \Delta_2$ then
  $\dim Y \leq \dim X -1$ and hence
 there is an interior point $a$ of $X$ not covered by $f(X)$. Take 
 a retraction $r : X \smallsetminus \{ a\} \to \partial X$. Then
 the identity map of $\partial X$ factors up to homotopy through 
 the contractible space $X$ which contradicts the non-triviality
 of the reduced homology of $\partial X$. 
\end{proof}

}

\bibliographystyle{alpha}
\bibliography{elonbib}
\end{document}